\documentclass[11pt]{article} 
\usepackage[utf8]{inputenc} 

\usepackage{amsmath}
\usepackage{amsthm}
\usepackage{amsfonts} 
\usepackage{color}
\usepackage{geometry} 
\geometry{a4paper} 
\usepackage{setspace} 
\usepackage{bbold}
\usepackage{booktabs} 
\usepackage{array} 
\usepackage{paralist} 
\usepackage{verbatim} 
\usepackage{subfig}
\usepackage{sectsty}
\usepackage[nottoc,notlof,notlot]{tocbibind} 
\usepackage[titles,subfigure]{tocloft} 
\allsectionsfont{\sffamily\mdseries\upshape}
\usepackage{bm} 
\newtheorem{Prop}{Proposition}[section]
\newtheorem{thm}{Theorem}[section]
\newtheorem{cor}{Corollary}[section]
\newtheorem{lem}[thm]{Lemma}
\newtheorem*{remark}{Remark}
\newtheorem{Def}{Definition}[section]

\newcommand{\beq}{\begin{equation}}
\newcommand{\eeq}{\end{equation}}

\newcommand{\fc}{\frac}
\newcommand{\nid}{\noindent}
\newcommand{\ra}{\rightarrow}
\renewcommand{\l}{\left}
\renewcommand{\r}{\right}
\newcommand{\Z}{\mathbb{Z}}
\newcommand{\N}{\mathbb{N}}
\newcommand{\T}{\mathbb{T}}
\newcommand{\R}{\mathbb{R}}
\renewcommand{\a}{\alpha}

\newcommand{\e}{\epsilon}
\renewcommand{\i}{\mathbf{i}}
\renewcommand{\(}{\left(}
\renewcommand{\)}{\right)}
\newcommand{\D}{D_{\triangle_{\bf{x}}} (\bm{a},\bm{\a}; N)}
\renewcommand{\bf}{\mathbf}

\newcommand{\ba}{\begin{array}}
\newcommand{\ea}{\end{array}}
\renewcommand{\d}{\delta}

\newcommand{\bal}{\begin{aligned}}
\newcommand{\eal}{\end{aligned}}

\renewcommand{\L}{\mathcal{L}}

\newcommand{\Dbar}{\bar{D}_{\triangle_{\bf{x}}} (\bm{a},\bm{\a}; N)}

\newcommand{\nstar}{\bf{n^*}}
\newcommand{\Uqplus}{U(\bf{h}^{q},\e^{+})}
\newcommand{\Uqminus}{U(\bf{h}^{q},\e^{-})}
\newcommand{\Uq}{U(\bf{h}^{q},\e^{\pm})}
\newcommand{\Up}{U(\bf{h}^{p},\e^{\pm})}
\newcommand{\hp}{h^{(p)}}
\newcommand{\hq}{h^{(q)}}
\newcommand{\Ln}{\mathcal{L}(\bf{n})}
\newcommand{\Vol}{\text{Vol}}

\newcommand{\tria}{\triangle_{\bf{x}}}

\newcommand{\V}{\bf{V}}
\newcommand{\W}{\bf{W}}
\newcommand{\n}{\bf{n}}

\newcommand{\bma}{\bm{\alpha}}
\newcommand{\Dbarone}{\bar{D}_{\triangle_{\bf{x}},1} (\bm{a},\bm{\a};N)}
\newcommand{\Dbartwo}{\bar{D}_{\triangle_{\bf{x}},2} (\bm{a},\bm{\a};N)}
\newcommand{\ftwo}{f_2(\bf{n},\bf{x},\bm{a},\bm{\a};N)}
\newcommand{\Dtwoone}{\bar{D}_{2,1}}
\newcommand{\Dtwotwo}{\bar{D}_{2,2}}
\renewcommand{\v}{\bf{v}}
\newcommand{\bfe}{\bf{\epsilon}}

\newcommand{\<}{\langle}
\renewcommand{\>}{\rangle}

\makeatletter 
\@addtoreset{equation}{section}
\makeatother

\title{Absolute Bounds for Ergodic Deviations of Toral Translations Relative to Triangles in $\mathbb{T}^2$}
\author{Hao Wu}
\date{}

\begin{document}

\maketitle
\textbf{Abstract.} Following Beck's work on toral translations relative to straight boxes in $\T^n$, we prove a weaker upper bound and the same lower bound for ergodic discrepancies of toral translations relative to a triangle in $\T^2$. Specifically, given a positive increasing function $\varphi(n)$, we show that for a full measure set of translation vectors $\bm{\a}\in \T^2$, if the series $\sum_{N=1}^{\infty} \fc{1}{\varphi (N)} $ converges, then the maximal discrepancy of toral translations relative to the triangles of a given slope $\tau$ is bounded from above by $Const(\bma,\tau) (\log N)^2 \varphi^2(\log \log N)$, and there would be infinitely many $N$'s such that the maximal discrepancy is greater than $(\log N)^2\varphi(\log \log N)$ if the series $\sum_{N=1}^{\infty} \fc{1}{\varphi (N)} $ diverges. An important difference between our result and that of Beck' is an additional factor $\varphi(\log \log N)$, which is necessary in our proof for controlling the new small divisors created by the hypotenuse.

\clearpage
\tableofcontents
\clearpage
\section{Introduction}
Let $\bma\in \T^d$, the toral translation $T_{\bma}$ over $\T^d=(\R/\Z)^d\cong [0,1)^d$ is defined by $\bf{x}\mapsto \bf{x}+\bma$ for $\bf{x}\in \T^d$, in the sense of modulo 1 for each coordinate. By Weyl's criterion (see \cite{Drmota}, section 1.2.1), the sequence $\{n\bma\}_{1\le n\le N}$ becomes equidistributed over $\T^d$ if the translation vector $\bma$ is irrational, i.e. for any measurable set $B\subset \T^d$, if $1, \ \a_1,\ \dots, \ \a_d$ are rationally independent, then as $N\rightarrow \infty$,
$$
\fc{\sum_{n=1}^{N}\chi_{B}(n\bma)}{N}\rightarrow \Vol(B).
$$
To measure the rate of convergence, we introduce the discrepancy function defined as the difference between the \emph{actual} number of hits before time $N$ and the \emph{expected} number of hits $N\cdot \Vol(B)$:
$$\sum_{n=1}^{N}\chi_{B}(n\bma)-N\Vol(B).$$

In 1920s, Khintchine \cite{Khintchine} proved that the maximal discrepancy for almost every toral translation in $\T^1$ relative to intervals is between $(\log N)(\log\log N)$ and $(\log N)(\log \log N)^{1+\e}$ for any $\e>0$. His proof used the continued fraction algorithm for irrationals. Due to the absence of the continued fraction algorithm in higher dimensions, the higher-dimensional counterpart of Khintchine's theorem remained unsolved for 70 years. In 1994, by a completely different method, which consists of a combination of Fourier analysis, the ``second-moment method'' and combinatorics, J. Beck\cite{Beck} proved the following precise multidimensional analogue of Khintchine's theorem:
\begin{thm}
Let $k\ge 2$, $\bm{\a}=(\a_1,\dots, \a_k)\in \R^k$ be the translation vector, and $B(\bf{x})=[0, x_1)\times \dots \times [0,x_k)\subset [0,1)^k$, define the ergodic discrepancy:
$$
D(\bm{\a}, \bf{x}; m)=\sum_{1\le n\le m} \chi_{B(\bf{x})}(n\bm{\a}\mod 1)-m\Vol(B(\bf{x}))
$$
and the maximal discrepancy:
$$
\Delta(\bm{\a}; N)= \max_{\mbox{\scriptsize$\begin{array}{c}
1\le m \le N \\
\bf{x}\in [0,1]^k\\\end{array}$}} |D(\bm{\a}, \bf{x}; m)|.
$$
Then for arbitray positive increasing function $\varphi(n)$ of $n$, 
\beq\label{Thm of Beck}
\Delta(\bm{\a}; N)\ll (\log N)^k\cdot \varphi(\log \log N) \Longleftrightarrow \sum_{n=1}^\infty \fc{1}{\varphi(n)}<\infty
\eeq
for almost every $\bm{\a}\in \T^k$, where $\ll$ denotes the Vinogradov symbol, e.g. $f(N)\ll g(N)$ means that $|f(N)|<c \cdot g(N)$ for all $N$ with a uniform constant $c$, here the constant depends on $\bma$.
\end{thm}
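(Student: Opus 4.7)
The plan is to follow the Fourier-analytic second-moment approach used by Beck himself. First I would expand the centered indicator of the box $B(\bf{x})$ as a Fourier series on $\T^k$; the coefficients factor as $c_{\bf{n}}(\bf{x})=\prod_{j=1}^{k}(1-e^{-2\pi\i n_jx_j})/(2\pi\i n_j)$, so that $|c_{\bf{n}}(\bf{x})|\ll\prod_j\min(x_j,1/|n_j|)$. Substituting $\bf{y}=n\bma$ and summing the resulting geometric series in $n$ rewrites the ergodic discrepancy as
\beq
D(\bma,\bf{x};m)=\sums_{\bf{n}\in\Z^k\setminus\{0\}}c_{\bf{n}}(\bf{x})\,S_{\bf{n}}(m,\bma),\qquad |S_{\bf{n}}(m,\bma)|\le\min\(m,\fc{1}{\|\bf{n}\cdot\bma\|}\),
\eeq
where $\|\cdot\|$ is the distance to the nearest integer. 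The whole problem is thereby reduced to a Diophantine question about weighted sums of small divisors $\|\bf{n}\cdot\bma\|$.

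For the upper bound I would compute the $L^2$-norm, in $\bma\in\T^k$, of a truncated and maximized version of $D$: orthogonality kills the cross terms, and the surviving diagonal sum is controlled by metric Diophantine estimates of Khintchine--Groshev type, which, under $\sum 1/\varphi(n)<\infty$, yield lower bounds on $\|\bf{n}\cdot\bma\|$ involving $\varphi$ for almost every $\bma$. Running the analysis along a dyadic scale $N=2^\ell$, combining the variance estimate with Chebyshev's inequality and the convergent half of Borel--Cantelli produces the pointwise bound $\Delta(\bma;N)\ll(\log N)^k\varphi(\log\log N)$ on a set of full measure. Uniformity in $\bf{x}\in[0,1]^k$ is then recovered by chaining over a polynomially fine net in $[0,1]^k$, with the interpolation error absorbed into the same $\varphi$-factor.

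For the lower bound I would invoke the converse half of Borel--Cantelli. When $\sum 1/\varphi(n)$ diverges, metric Diophantine theory provides, for almost every $\bma$ and infinitely many dyadic scales $\ell$, a resonant lattice point $\bf{n}^{(\ell)}$ with $\|\bf{n}^{(\ell)}\cdot\bma\|^{-1}$ as large as required. Choosing $\bf{x}^{(\ell)}$ and $m^{(\ell)}\le N$ so as to align with the phase of the corresponding Fourier mode makes this single mode dominate the rest of the series, producing discrepancies of the claimed size; approximate independence of the resonance events across well-separated scales then promotes this to infinitely many $N$.

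The main obstacle is the uniform/off-diagonal accounting in the upper bound. The Fourier series is supported on the full lattice $\Z^k\setminus\{0\}$, and the divisor-style count $\#\{\bf{n}:|n_1\cdots n_k|\le T\}\asymp T(\log T)^{k-1}$ is exactly what produces the $(\log N)^k$ factor; unfortunately the same combinatorics inflate the cost of any union bound over the continuous parameter $\bf{x}$. Beck's innovation is a decomposition of the lattice into hyperbolic blocks together with a discretization of $\bf{x}$ on a polynomially fine net and derivative-type interpolation. Carrying this out while losing only a factor $\varphi(\log\log N)$ (rather than $\varphi(\log N)$) is the technical heart of the proof, and is also the mechanism that converts the $\sum 1/\varphi<\infty$ dichotomy into the sharp iterated-logarithm-type threshold.
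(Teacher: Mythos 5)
This is Beck's theorem, which the paper cites rather than proves; the relevant comparison is therefore to Beck's own argument, whose structure is visible in Sections 2--5 where the paper adapts it to triangles. Your sketch has the right Fourier-analytic starting point, but it misses both mechanisms that actually produce the $(\log N)^{k}\varphi(\log\log N)$ threshold, and as written contains a step that fails. The expansion $D=\sum_{\bf{n}\neq 0}c_{\bf{n}}(\bf{x})S_{\bf{n}}(m,\bma)$ is only conditionally convergent; Beck's first move (Section 2 of the paper) replaces $D$ by a Fej\'er-averaged $\bar D$ obtained by oscillating the starting point by $O(N^{-2})$ and the summation range by $O(1)$. This introduces damping factors $(\sin(2\pi n_j/N^{2})/(2\pi n_j/N^{2}))^{2}$ that make the series absolutely convergent, truncate $\bf{n}$ effectively to $|n_j|$ of order $N^{2}$, and deliver uniformity in $\bf{x}$ and in $m\le N$ automatically (Proposition 2.1 gives $|\bar D-D|\ll(\log N)^{1+\e}$ uniformly), so your proposed chaining over a polynomially fine net in $\bf{x}$ is both unnecessary and would cost an extra logarithm. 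More seriously, computing $\int_{\T^{k}}D^{2}\,d\bma$ and running Chebyshev plus Borel--Cantelli along dyadic scales cannot give a polylogarithmic bound: by orthogonality this integral is of order $m$, so the Chebyshev failure probability at threshold $(\log N)^{k}\varphi(\log\log N)$ is $\asymp N/((\log N)^{2k}\varphi^{2})$, which is nowhere near summable. Beck's second-moment lemma (the paper's Lemma 3.2) is not an $L^{2}$ bound on $D$; it is a concentration estimate on the \emph{number} of lattice points $\bf{n}$ in a dyadic box having $\|\bf{n}\bma\|$ below a threshold, used only to count small divisors.

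The $(\log N)^{k}$ exponent actually comes from a combinatorial cancellation argument that is entirely absent from your sketch: Beck partitions the surviving $\bf{n}$ into boxes $U(\bf{l},\bm{\e})$, calls an index $\bf{l}$ ``$\bm{\e}$-big'' when the exponential sum over the pair of boxes fails to cancel to relative size $1/\log N$, and proves the key lemma of Section 5 (the analogue in the paper is that every ``special line'' of neighboring indices contains at most one $\bm{\e}$-big vector). This caps the number of big boxes and is what keeps the final estimate at $(\log N)^{k}$ rather than $(\log N)^{k+1}$; it is not the divisor-count $T(\log T)^{k-1}$ you cite, nor does Beck discretize $\bf{x}$. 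The $\varphi(\log\log N)$ factor enters separately through a Lemma 3.1 type a.e.\ bound which, under $\sum 1/\varphi<\infty$, controls the small-divisor sum $\sum\bigl(\prod_j\max\{1,|n_j|\}\cdot\|\bf{n}\bma\|\bigr)^{-1}$ over the set of $\bf{n}$ with tiny denominators by $(\log N)^{k}\varphi(\log\log N)$. Your lower-bound outline is closer in spirit---Beck does locate a dominant Fourier coefficient via a Khintchine-type divergent law for linear forms (the paper's Lemma 3.3)---but it remains a plan rather than a proof, and the phase-alignment step needs to be made precise using the free parameters $\bf{x}$ and $m$.
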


In this paper, following Beck's method, we consider the ergodic discrepancies of irrational toral translations relative to \emph{triangles} inside $\T^2$. Let $\bm{\a}=(\a_1,\a_2)\in[0,1]^2$ be the translation vector, where $1,\a_1,\a_2$ are rationally independent, let $\bm{a}=(a_1,a_2)\in\R^2$ be the starting point, and denote the right triangle of sides $\bf{x}=(x_1,x_2)\in (0,1]^2$ by: 
\beq\label{Definition of the triangle x}
\triangle_{\bf{x}}=\{(y_1,y_2)\ | \ 0\le y_1/x_1+y_2/x_2 <1, \ 0\le y_i\le x_i, i=1,2\}.
\eeq
Define the ergodic discrepancy of the toral translation $\bm{\a}$ starting from the point $\bm{a}$ relative to the triangle $\triangle_{\bf{x}}$:

$$D_{\triangle_{\bf{x}}} (\bm{a},\bm{\a};m) = \sum\limits_{n=1}^{m}  \chi_{\triangle_{\bf{x}}}(\bm{a}+n\bm{\alpha} \mod 1)- m\text{Vol}(\triangle_{\bf{x}})$$

Since the sequence $\{n\bm{\a} \mod 1\}$ is equistributed over $\T^2$, the ``expected number'' of points in the sequence that visit the triangle $\tria$ before time $m$ is $m\text{Vol}(\tria)$, and we would like to prove a similar bound for the error term $D_{\tria}$.

\nid\textbf{Notation.} Through out this paper,  $\| \cdot\|$ denotes the distance to the closest integer. For a given triangle $\tria$ defined as above \eqref{Definition of the triangle x}, let $\tau=x_1/x_2$ denote the slope of the hypotenuse of the triangle. Without loss of generality, we assume that $\tau\in[0,1]$. For $\n=(n_1,n_2,n_3)\in \Z^3$, and a vector $\bma=(\a_1,\a_2)\in \R^2$, let $\n\bma=n_1\a_1+n_2\a_2$ denote the inner product of their first two coordinates. From Section 5, the $\n$ would denote $(n_1, n_2)\in \Z^2$ instead of $(n_1,n_2,n_3)$, and $n_3$ would denote the closest integer to $n_1\a_1+n_2\a_2$, since in Section 4 the other possibilities for $n_3$ have already been estimated.

The main result of this paper is the following:
\begin{thm}{\label{main result}}
Let $\bm{\a}=(\a_1, \a_2)\in \R^2$ be the translation vector, $\bm{a}=(a_1,a_2)\in [0,1]^2$ be the starting point, and for $\bf{x}=(x_1, x_2)\in (0,1]^2$ define the triangle $\triangle_{\bf{x}}=\{(y_1,y_2)\ | \ 0\le y_1/x_1+y_2/x_2 <1, \ 0\le y_i< x_i, i=1,2\}$, define the ergodic discrepancy:
$$D_{\triangle_{\bf{x}}} (\bm{a},\bm{\a}; m) = \sum\limits_{n=1}^{m}  \chi_{\triangle_{\bf{x}}}(\bm{a}+n\bm{\alpha} \mod 1)- m\text{Vol}(\triangle_{\bf{x}})$$
and the maximal discrepancy for triangles of a fixed slope $\tau$:
$$
\Delta(\bm{\a},\tau; N)= \max_{\mbox{\scriptsize$\begin{array}{c}
1\le m \le N; \\
\bm{a}\in [0,1]^2;\\
x_1/x_2=\tau;
\end{array}$}} |D_{\tria} (\bm{a},\bm{\a}; m)|.
$$
Given an arbitray positive increasing function $\varphi(n)$ of $n$, then for almost every $\bm{\a}\in \R^2$ and almost every $\tau\in \R^+$, we have: 
\beq\label{divergent part of the theorem}
\sum_{n=1}^\infty \fc{1}{\varphi(n)}<\infty \ \Rightarrow \ \Delta(\bm{\a},\tau; N)\ll (\log N)^2\cdot \varphi^2(\log \log N),
\eeq
\beq\label{convergent part of the theorem}
\sum_{n=1}^\infty \fc{1}{\varphi(n)}=\infty \ \Rightarrow \ \Delta(\bm{\a},\tau; N) > (\log N)^2\cdot \varphi(\log \log N) \text{ i.o. },
\eeq
where i.o. stands for infinitely often, and the constant in \eqref{divergent part of the theorem} depends on $\bma$ and $\tau$.

\end{thm}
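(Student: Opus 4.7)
The plan is to follow Beck's Fourier-analytic approach. First I would expand the characteristic function of the triangle as a Fourier series on $\T^2$,
$$
\chi_{\tria}(\bf{y}) \;=\; \Vol(\tria) + \sum_{\bf{n}\in\Z^2\setminus\{0\}} \hat{\chi}_{\tria}(\bf{n})\, e^{2\pi i \bf{n}\cdot\bf{y}},
$$
write the ergodic discrepancy $\D$ as a double sum in $\bf{n}$ and $k\le m$, and bound the inner exponential sum in $k$ by the standard $\min(m,\|\bf{n}\cdot\bma\|^{-1})$. The essential new feature compared with the rectangular case is the shape of the Fourier coefficient: a direct integration over the simplex (after rescaling $\tria$ to the standard triangle $\{u+v<1,\, u,v\in[0,1]\}$) gives $\hat{\chi}_\tria(\bf{n})$ of order $(n_1 n_2)^{-1} + \bigl(n_2 \|n_1\tau - n_2\|\bigr)^{-1}$, i.e.\ besides the usual $(n_1 n_2)^{-1}$ decay there is a new ``hypotenuse divisor'' $\|n_1\tau - n_2\|$ that can be much smaller and must be controlled in tandem with the ergodic divisor $\|\bf{n}\cdot\bma\|$.

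For the upper bound \eqref{divergent part of the theorem} I would implement a dyadic second-moment argument over the triple of scales $|\bf{n}|\sim 2^k$, $\|\bf{n}\cdot\bma\|\sim 2^{-\ell_1}$, $\|n_1\tau - n_2\|\sim 2^{-\ell_2}$, with an auxiliary integer $n_3$ recording the nearest integer to $\bf{n}\cdot\bma$ as the paper's notation anticipates. A Parseval-type variance computation in the starting point $\bm{a}$ controls the $L^2$-contribution of each dyadic block, and a convergent Borel--Cantelli argument applied to the product measure on $(\bma,\tau)$ then eliminates almost surely the ``doubly resonant'' blocks in which \emph{both} small divisors are abnormally tiny, provided $\sum_N 1/\varphi(N)<\infty$. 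Summing the surviving blocks against the decay of $\hat{\chi}_\tria$ yields the target $(\log N)^2\varphi^2(\log\log N)$: the $(\log N)^2$ comes from the two lattice dimensions exactly as in Beck, and one factor of $\varphi(\log\log N)$ is paid for each of the two independent small divisors.

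For the lower bound \eqref{convergent part of the theorem} I would construct, along a sparse subsequence of times $N$, a single resonant vector $\bf{n}$ that makes $\hat{\chi}_\tria(\bf{n})$ nearly extremal, $\|\bf{n}\cdot\bma\|$ correspondingly small, and the partial exponential sum close to its maximal value $\|\bf{n}\cdot\bma\|^{-1}$. A divergent Borel--Cantelli argument, using $\sum_N 1/\varphi(N)=\infty$, then shows that for almost every $(\bma,\tau)$ such a resonance occurs for infinitely many $N$, producing a main term of size $(\log N)^2 \varphi(\log\log N)$ in the discrepancy. Only a single factor of $\varphi$ appears here because the lower bound exploits one well-chosen resonance instead of summing over a whole dyadic tail.

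The main obstacle will be the joint control of the two Diophantine small divisors $\|\bf{n}\cdot\bma\|$ and $\|n_1\tau - n_2\|$: for fixed $\bf{n}$ these conditions are not probabilistically independent, so a naive product-measure estimate is not sharp. I expect the technical heart of the proof to be a quantitative counting estimate for the number of $(\bf{n},n_3)\in\Z^3$ lying in a prescribed product of dyadic small-divisor windows, strong enough that Borel--Cantelli still converges after absorbing the hypotenuse loss. This is also presumably why the notation of Section~4 separates the case $n_3=\text{round}(\bf{n}\cdot\bma)$ from other choices of $n_3$: the off-diagonal $n_3$ can be dispatched by standard lattice-point counts, whereas the diagonal case is precisely where the new hypotenuse analysis is forced on us and where the extra factor $\varphi(\log\log N)$ cannot be avoided.
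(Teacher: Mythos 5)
Your general Fourier-analytic program is in the right spirit, but it diverges from the paper's proof at several points that matter.

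For the lower bound \eqref{convergent part of the theorem}, the paper does not construct a resonance at all: it simply observes that a straight box splits diagonally into two right triangles, so if the box discrepancy exceeds $(\log N)^2\varphi(\log\log N)$ (which Beck's divergence theorem gives i.o.\ for a.e.\ $\bma$), then one of the two triangles inherits at least half of it, and after translating that triangle to the origin this bounds $\Delta(\bma,\tau;N)$ from below. Your plan of finding a single resonant $\n$ with $\|\n\bma\|$ tiny and the partial exponential sum near-extremal would essentially reprove Beck's lower bound from scratch, which is a substantial piece of work, and you would additionally have to verify that the hypotenuse factor $\hat\chi_{\tria}(\n)$ does not kill the resonance. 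The diagonal-cut reduction avoids all of that.

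For the upper bound, three concrete issues. First, you do not mention the Fejér/weighted-average step: the paper replaces $\D$ by a $1/N^2$-smoothed version $\Dbar$ (Poisson summation in $\R^3$ over the lattice of visit times, then averaging the start $\bm a$ and the summation length against triangle kernels), which is what makes the resulting series converge absolutely and lets one reorganize by small-divisor scale; without some such smoothing the raw Fourier expansion of $\chi_{\tria}$ summed against the ergodic sum has genuine convergence problems, and the Erdős--Turán route you gesture at with $\min(m,\|\n\bma\|^{-1})$ would already cost an extra $\log N$ that the theorem cannot afford. Second, your ``Parseval-type variance computation in the starting point $\bm a$'' is not how the second-moment method is used here: the variance is taken in the \emph{rotation} $\bma$ (and, for the hypotenuse divisor, in $\tau$), not in the starting point; it is the almost-sure equidistribution of $\{\n\bma\}$ in dyadic windows (the paper's Lemma~3.2) that controls the counts, and the starting point $\bm a$ only enters through bounded exponential phases. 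Third, the extra $\varphi^2(\log\log N)$ does not arise the way you describe, as one $\varphi$ per small divisor; it comes from a single specific sub-sum, namely the terms where $n_2$ is the nearest integer to $n_1\tau$ so $|n_1\tau-n_2|=\|n_1\tau\|$, which the paper isolates and bounds by a metric lemma built from the a.e.\ finiteness of $\int\bigl(\|\n\bma\||\log\|\n\bma\||\,\varphi(\log\log\|\n\bma\|)\bigr)^{-1}$; a separate Khintchine partial-quotient argument for $\tau$ contributes only a single $\varphi(\log\log N)$ in the companion sub-sum where $|n_1\tau-n_2|\ge 1/2$. Identifying and handling the case $|n_1\tau-n_2|=\|n_1\tau\|$ explicitly is the genuinely new analytic ingredient relative to Beck, and your outline does not yet contain it.
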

\begin{remark}
Unfortunately the statement does not give a perfect equivalence, the additional factor $\varphi(\log \log N)$ is needed for controlling the extra small divisors that arise from the hypotenuse of the triangle, which does not occur in the case of straight rectangles.
\end{remark}

In fact, after fixing a toral translation, the bound of the discrepancy is uniform for any starting point $\bm{a}$, therefore it is equivalent to translate the right triangle $\tria$ inside $\T^2$, and since every general shaped triangle could be decompsed into a finite sum or difference of right triangles, it is immediate that the theorem above hold true for almost every triangle inside $\T^2$, hence we obtain a corollary:
\begin{cor}\label{Corollary for triangles of general shape}
The same bounds in Theorem \ref{main result} hold for almost every triangle $\in \T^2$ (or more generally almost every polygon), with a constant that depends on the slopes of the sides of the triangle (or the polygon).
\end{cor}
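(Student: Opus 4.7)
The plan is to derive Corollary~\ref{Corollary for triangles of general shape} from Theorem~\ref{main result} via a bounded signed decomposition of the indicator $\chi_T$ of a general triangle $T\subset\T^2$ into indicators of axis-aligned right triangles (of the shape $\tria$, up to orientation) together with an axis-aligned rectangle. Uniformity of Theorem~\ref{main result} in the starting point $\bm{a}$ is what allows each term in the decomposition to inherit the discrepancy bounds individually. The polygon case then follows by triangulating into a bounded number of triangles and summing the triangular estimates.

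\textbf{Step 1: decomposition.} Let $T$ be a triangle with vertices $V_1,V_2,V_3$ and side-slopes $\tau_1,\tau_2,\tau_3$, and let $R$ be its axis-aligned bounding rectangle. Generically one vertex of $T$ lies at a corner of $R$ and the other two on the opposite sides, in which case $R\setminus T$ is the disjoint union of three right triangles $T_1,T_2,T_3$, each with legs parallel to the coordinate axes and with hypotenuse coinciding with a side of $T$; degenerate configurations (sides of $T$ parallel to an axis) yield fewer pieces. Each $T_i$, after a translation and possibly an axis-exchanging reflection, is a triangle of the form $\tria$ with $x_1/x_2=|\tau_i|$, and the four reflections satisfy the same discrepancy bounds by symmetry of the proof of Theorem~\ref{main result}. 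Up to a measure-zero boundary,
\[
\chi_T = \chi_R - \chi_{T_1} - \chi_{T_2} - \chi_{T_3},
\]
and so $D_T(\bm{a},\bma;m) = D_R - D_{T_1} - D_{T_2} - D_{T_3}$ with each piece evaluated at a corresponding translate of $\bm{a}$.

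\textbf{Step 2: upper bound.} Taking absolute values yields $|D_T|\le |D_R|+\sum_i|D_{T_i}|$ pointwise in $(\bm{a},m)$. Beck's theorem bounds $\max_{\bm{a},m\le N}|D_R|\ll(\log N)^2\varphi(\log\log N)$, while Theorem~\ref{main result} applied with slopes $\tau_1,\tau_2,\tau_3$ bounds each $\max|D_{T_i}|\ll(\log N)^2\varphi^2(\log\log N)$. Each of these requires $\bma$ and each $\tau_i$ to avoid a certain null set; since only three slopes are involved, the intersection of a full-measure condition on $\bma$ with three full-measure conditions on the $\tau_i$ is again of full measure. A Fubini argument converts ``almost every triangle'' (parametrised by its vertices) to ``almost every slope triple'', so \eqref{divergent part of the theorem} holds for almost every $T$. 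Polygons are handled by triangulation into $O(1)$ triangles and summing the resulting bounds.

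\textbf{Step 3: lower bound and main obstacle.} For \eqref{convergent part of the theorem} I would rearrange the identity of Step~1 to isolate one right-triangular piece,
\[
\chi_{T_1} = \chi_R - \chi_T - \chi_{T_2} - \chi_{T_3},
\]
which gives $\max|D_T|\ge \max|D_{T_1}|-\max|D_R|-\max|D_{T_2}|-\max|D_{T_3}|$, and then invoke the lower bound of Theorem~\ref{main result} for $D_{T_1}$. \textbf{The main obstacle} lies precisely here: Theorem~\ref{main result} gives an upper bound of order $\varphi^2(\log\log N)$ whereas its lower bound is only of order $\varphi(\log\log N)$, so the naive subtraction yields a negative quantity for large $N$. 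Overcoming this requires an independence-style argument showing that at the infinitely many $N$'s where $D_{T_1}$ attains its $\varphi$-lower bound, the pieces $D_{T_2}, D_{T_3}$ are typically of matching (rather than worst-case) order; concretely, the extra $\varphi$-factor in the upper bound of Theorem~\ref{main result} originates from hypotenuse-small-divisors attached to each individual slope $\tau_i$, and for generic $\bma$ the resonance events associated with distinct slopes should be asymptotically disjoint, so at most one of the three pieces is resonant at a given $N$. Making this last point rigorous---most likely via a Borel--Cantelli-type estimate for simultaneous small-divisor events---is where the real work of the corollary lies.
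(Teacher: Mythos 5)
Your Steps 1 and 2 reproduce exactly the argument the paper offers in the sentence preceding Corollary \ref{Corollary for triangles of general shape}: decompose a general triangle into a bounded signed combination of axis-aligned right triangles plus a rectangle, invoke Beck's theorem for the rectangle and Theorem \ref{main result} for each right triangle, and exploit the uniformity of the discrepancy bound in the starting point $\bm{a}$ to translate each piece into the standard position. (Your Fubini remark converting ``almost every triangle'' to ``almost every slope triple'' is a small but worthwhile detail the paper does not spell out.) For the upper bound \eqref{divergent part of the theorem}, then, you and the paper are in agreement.

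Your Step 3, however, puts a finger on a genuine gap that the paper glosses over. The paper's one-sentence justification of the corollary speaks only to the upper bound; the lower bound \eqref{convergent part of the theorem} for general triangles is never addressed. The diagonal-cutting argument the paper uses for the divergent part of Theorem \ref{main result} (Beck's box discrepancy is large infinitely often, so one half of a diagonally cut box is large) produces a right triangle, and does not directly transfer to an arbitrary fixed triangle $T$ with three prescribed slopes. Isolating one right-triangular piece $T_1$ from the bounding-rectangle decomposition and subtracting the others, as you attempt, is indeed vacuous here: the subtracted pieces carry the $\varphi^2(\log\log N)$ upper bound while the isolated piece only delivers a $\varphi(\log\log N)$ lower bound, so the difference has the wrong sign for large $N$. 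Your suggested remedy --- showing that the hypotenuse small-divisor resonances attached to the distinct slopes $\tau_i$ are asymptotically disjoint in $N$ for generic $\bma$, via a Borel--Cantelli estimate for simultaneous small-divisor events --- is the right kind of idea, but it is not carried out, and the paper offers nothing toward it either. As stated, the lower-bound half of the corollary is not justified by the paper, and your proposal correctly identifies where the missing work lies.
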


This paper is organized as the following: in Section 2, for the convenience of later estimations, we transform the ergodic discrepancy to its Fourier series by using Poisson's summation formula. In Section 3 we estimate the contribution of the "tail" of the Fourier series, i.e. the high frequency nodes or the extremely small divisors. Section 4 and 5 deal with the main part of the discrepancy, Section 4 is about the constant part and Section 5 deals with the exponential part, both of which will be properly defined later. Combining the section 3-5, we have an overall estimation of the discrepancy, which proves our main theorem.

\section{Poisson's summation formula}

In this section, following \cite{Beck}, we use the Poisson formula to transform the ergodic discrepancy into a Fourier series.  The main result of this section is Proposition \ref{Prop Dbar}. It gives the main subject of our analysis: a roof-like average of the original Fourier series of the discrepancy, which has better convergence properties. First we introduce the Fourier series of the ergodic dispcrepancy by the following Lemma.
\begin{lem}{\label{Lemma of Fourier Series}}
By using the Poisson formula, the ergodic sum $D_{\triangle_{\bf{x}}} (\bm{a},\bm{\a}; N)$ becomes:
$$
\begin{aligned}
D_{\triangle_{\bf{x}}} (\bm{a},\bm{\a}; N)
&=\fc{\i^3}{(2\pi)^3} \sum_{\bf{n}\in \Z^3\backslash\{0\}} 
\prod_{j=1}^2 e^{-2\pi \i n_ja_j}\(\fc{1-e^{2\pi \i n_2x_2}}{n_1 n_2}\fc{e^{-2\pi \i (n_1\a_1+n_2\a_2-n_3)N}-1}{n_1\a_1+n_2\a_2-n_3}\)\\
&+\fc{\i^3}{(2\pi)^3} \sum_{\bf{n}\in \Z^3\backslash\{0\}} 
\prod_{j=1}^2 e^{-2\pi \i n_ja_j}\(\fc{e^{2\pi\i n_1x_1}-e^{2\pi\i n_2 x_2}}{n_1(n_1\fc{x_1}{x_2}-n_2)}\fc{e^{-2\pi \i (n_1\a_1+n_2\a_2-n_3)N}-1}{n_1\a_1+n_2\a_2-n_3}\).
\end{aligned}
$$
\end{lem}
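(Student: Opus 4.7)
The plan is to Fourier-expand $\chi_{\tria}$ on $\T^2$ in the spatial variable, substitute into the ergodic sum, and apply Poisson summation to the discrete time sum in $n$. The spatial expansion produces integer indices $(n_1,n_2)\in\Z^2$, while Poisson in time produces a third integer $n_3$; together they give the sum over $\bf{n}=(n_1,n_2,n_3)\in\Z^3$ in the lemma. The zero-frequency spatial mode $(n_1,n_2)=(0,0)$ contributes exactly $N\,\Vol(\tria)$, which cancels the subtracted expected value in $D_{\tria}(\bm{a},\bma;N)$, so only the nonzero $\bf{n}$ survive.

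I would compute the Fourier coefficient $\hat\chi_{\tria}(n_1,n_2)=\int_{\tria}e^{2\pi\i(n_1 y_1+n_2 y_2)}\,dy_1\,dy_2$ (sign convention chosen to match the $e^{-2\pi\i n_j a_j}$ factors in the statement) by parameterising $\tria=\{0\le y_2\le x_2,\ 0\le y_1\le x_1(1-y_2/x_2)\}$ and integrating first in $y_1$, then in $y_2$. Simplifying the resulting exponentials via $e^{2\pi\i(n_2-n_1 x_1/x_2)x_2}=e^{2\pi\i n_2 x_2}e^{-2\pi\i n_1 x_1}$ decomposes the coefficient into two natural pieces: a ``rectangular-corner'' part of the form $(1-e^{2\pi\i n_2 x_2})/(n_1 n_2)$, and a ``hypotenuse'' part of the form $(e^{2\pi\i n_1 x_1}-e^{2\pi\i n_2 x_2})/(n_1(n_1 x_1/x_2-n_2))$. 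The latter carries the new small divisor $n_1 x_1/x_2-n_2$ tied to the slope $\tau$, precisely the feature distinguishing the triangle case from the rectangle case, and these two summands match the two terms displayed in the lemma.

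Inserting this expansion into $\sum_{n=1}^N\chi_{\tria}(\bm{a}+n\bma)$, pulling out the $e^{-2\pi\i n_j a_j}$ factors and separating the zero mode leaves, for each nonzero spatial frequency, the exponential sum $\sum_{n=1}^N e^{-2\pi\i n\omega}$ with $\omega=n_1\a_1+n_2\a_2$. Writing this as $\sum_{n\in\Z}\chi_{[1,N]}(n)e^{-2\pi\i n\omega}$ and applying Poisson summation converts it into $\sum_{n_3\in\Z}\hat g_N(\omega-n_3)$ with $g_N=\chi_{[1,N]}$, producing the kernel $\fc{e^{-2\pi\i(\omega-n_3)N}-1}{-2\pi\i(\omega-n_3)}$ and thereby introducing the third index $n_3$ with its characteristic small divisor $n_1\a_1+n_2\a_2-n_3$. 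Multiplying the two spatial pieces by this time kernel and tracking the prefactor $\fc{\i^2}{(2\pi)^2}\cdot\fc{1}{-2\pi\i}=\fc{\i^3}{(2\pi)^3}$ reproduces the double-sum formula in the statement.

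The main obstacle is convergence: the Fourier series of the indicator of a polygon in two dimensions is only conditionally convergent, and the cutoff $\chi_{[1,N]}$ has jumps, so none of the interchanges of summation above are termwise legitimate as written. I would handle this by the standard regularisation: convolve $\chi_{\tria}$ with a two-dimensional smooth approximate identity of width $\e$ and the time cutoff with a one-dimensional bump of width $\d$, carry out all the spatial Fourier and Poisson manipulations in the smoothed setting where absolute convergence makes every interchange unambiguous, then let $\e,\d\to 0$ while verifying that the smoothing error vanishes in the pointwise (or almost-everywhere in $\bm{a}$) sense needed for the later sections. Boundary corrections from the endpoints $n=1,N$ of the time cutoff are of size $O(1)$ and are absorbed into the same limiting procedure. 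This is the only analytically delicate point; the algebraic form of the expansion then follows directly from the computations above.
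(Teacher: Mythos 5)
Your proposal is correct and proves the lemma, but by a genuinely different route from the paper. The paper sets up a single three-dimensional Poisson summation: it encodes the entire ergodic sum as a count of points of the translated lattice $L(\bm{a},\bma)=\{\bm{a}+\mathbf{A}\mathbf{m}:\mathbf{m}\in\Z^3\}$ inside the prism $\tria\times(0,N]$, applies Poisson in $\R^3$ to $f_{\bm{a}}(\mathbf{y})=\chi_{\mathbf{x},N}(\bm{a}+\mathbf{A}\mathbf{y})$, and then changes variables $\mathbf{z}=\bm{a}+\mathbf{A}\mathbf{y}$; the resulting integral over the prism automatically factors into a two-dimensional integral over $\tria$ times a one-dimensional integral over $(0,N]$, producing the displayed formula. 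You instead factor the problem from the start, expanding $\chi_{\tria}$ in a two-dimensional Fourier series on $\T^2$ and then applying one-dimensional Poisson only to the time sum, obtaining the third index $n_3$ from the time-Poisson frequencies rather than from a third lattice coordinate. After the paper's change of variables and a relabeling of dual indices, its integrand is precisely the product of your spatial Fourier coefficient of $\chi_{\tria}$ and your time kernel $\hat g_N(\omega-n_3)$, so the two derivations converge at the same intermediate expression. The paper's lattice formulation is more compact and is the one that scales directly to the higher-dimensional boxes in Beck's work; your space/time separation is more elementary and makes the origin of each small divisor visibly transparent ($n_1$ and $n_1\tau-n_2$ from the triangle's sides, $\n\bma-n_3$ from the near-resonance in time). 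Both proofs share the same analytic delicacy --- the prism's indicator is discontinuous, so Poisson summation is only formal at this stage --- and you are right that a regularization argument is needed to justify the interchange; the paper addresses this implicitly by immediately passing to the F\'ejer-averaged quantity $\Dbar$ in the next step, which is exactly the smoothing your last paragraph calls for.
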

\begin{proof}
The ergodic sum could be calculated as follows, note that the condition:
$n\bm{\a} \mod 1 \in \tria$ is equivalent to $\exists \bf{m}=(m_1, m_2, m_3)\in \mathbb{Z}^3$ such that: 

$$
\begin{array}{c} 

0\le m_1\alpha_i  -m_2 <x_i ; \ i=1,2;\\
\sum_{i=1}^{2}\fc{n\a_i}{x_i}< 1; \\
1\le n \le N; \\
\end{array}
$$

Consider the translated lattice in $\mathbb{R}^3$, 
$$ 
L(\bm{a}, \bm{\a})=\{(a_1+m_1\alpha_1 - m_2, a_2+ m_1 \a_2 -m_3, m_1) \ | \ \bf{m}=(m_1,m_2,m_3)\in \mathbb{Z}^3 \},
$$
note that the fractional part of the vector $n\bm{\a}$ lying in $\tria $ is equivalent to the lattice point inside the following set: $$B(\bf{x},N)= \tria \times (0,N]$$

Let $\chi(\bf{y})=\chi_{\bf{x},N}(\bf{y})$ be the characteristic function of the box $B(\bf{x},N)$, so the ergodic sum becomes 
$$\sum \limits_{n =1}^N \chi_{\tria}(n\bm{\alpha}\mod 1)=\sum \limits_{\bf{m} \in \mathbb{Z}^3 }\chi(a_1+m_1\alpha_1-m_2, a_2+m_1\alpha_2-m_3, m_1)
$$

Writing in matrix form:

$$
(a_1+m_1\alpha_1-m_2, a_2+m_1\alpha_2-m_3, m_1)=\left( \begin{matrix} a_1\\ a_2\\ 0 \end{matrix}\right) + \left( \begin{matrix} \a_1 &  -1  & 0 \\ \a_2 & 0 & -1 \\ 1 & 0 & 0 \end{matrix}\right) \cdot \left( \begin{matrix} m_1 \\m_2 \\m_3 \end{matrix} \right)=\bm{a}+\bf{A}\cdot \bf{m}
$$
where
$$
\bf{A}=\left( \begin{matrix} \alpha_1 &  -1 & 0 \\\a_2 & 0 & -1 \\1 & 0 & 0 \end{matrix}\right), \quad \bf{m}=\left( \begin{matrix} m_1 \\m_2 \\m_3 \end{matrix} \right), 
$$

Apply the Poisson formula to the function $f_{\bm{a}}(\bf{y})=\chi_{\bf{x}, N}(\bm{a}+\bf{A}\cdot \bf{y})$, we have 
$$\begin{aligned}
\sum \limits_{\bf{m}\in\mathbb{Z}^3}\chi(\bm{a}+\bf{A}\cdot \bf{m})
&=\sum\limits_{\bf{m}\in\Z^3} f_{\bm{a}}(\bf{m}) \\
&=\sum \limits_{\bm{\nu} \in \Z^3} \int_{\R^3} f_{\bm{a}}(\bf{y}) \cdot  e^{-2\pi \i \bm{\nu} \cdot \bf{y}} d\bf{y}   \\
&=\sum \limits_{\bm{\nu} \in \Z^3} \int_{\R^3} \chi(\bm{a}+\bf{A}\cdot \bf{y}) \cdot  e^{-2\pi \i \bm{\nu} \cdot \bf{y}} d\bf{y}   \\
&=\frac{1}{\det{\bf{A}}}\sum \limits_{\bm{\nu}\in \Z^3}\int_{\R^3} \chi(\bf{z}) e^{-2\pi \i \bm{\nu} \cdot(\bf{A}^{-1}\cdot (\bf{z}-\bm{a}))}d\bf{z}    \\
\end{aligned} 
$$

Integrate over the triangle $\tria$ for the first 2 coordinates of $\bf{z}$ and integrate over $(0,N]$ for the last coordinate $z_3$, we get: 
$$\begin{aligned}
D_{\triangle_{\bf{x}}} (\bm{a},\bm{\a}; N)
&=\sum_{\bf{m}\in \Z^3} f(\bf{m})- N\Vol(\tria) \\
&=\fc{\i^3}{(2\pi)^3} \sum_{\bf{n}\in Z^3\backslash\{0\}} 
\prod_{j=1}^2 e^{-2\pi \i n_ja_j}\(-\fc{e^{2\pi \i n_1x_1}-1}{n_1 n_2}\fc{e^{-2\pi \i (n_1\a_1+n_2\a_2-n_3)N}-1}{n_1\a_1+n_2\a_2-n_3}\)\\
&+\fc{\i^3}{(2\pi)^3} \sum_{\bf{n}\in Z^3\backslash\{0\}} 
\prod_{j=1}^2 e^{-2\pi \i n_ja_j}\(\fc{e^{2\pi\i n_1x_1}-e^{2\pi\i n_2 x_2}}{(n_1-\fc{x_2}{x_1}n_2)n_2}\fc{e^{-2\pi \i (n_1\a_1+n_2\a_2-n_3)N}-1}{n_1\a_1+n_2\a_2-n_3}\).
\end{aligned}
$$
\end{proof}


In order to avoid technical problems with the convergence, we will not study $D_{\triangle_{\bf{x}}} (\bm{a},\bm{\a}; N)$ directly, instead, we will follow Beck\cite{Beck} and use a special weighted average of $D_{\triangle_{\bf{x}}} (\bm{a},\bm{\a}; m)$ over a $\fc{1}{N^2}$ neighborhood. To this end, we oscillate starting point $\bm{a}$ with an amplitude of $\fc{1}{N^2}$, and also the range for summation $\{1,\dots, N\}$ with amplitude of $2$, and then by using the Féjer kernal, we obtain our main object of interest. Specifically, let $\bf{u}=(u_1,u_2)$ be the oscillation of the starting point, $u_3$ be the oscillation of the summation range, then for the oscillated discrepancy:
$$
D_{\triangle_{\bf{x}}} (\bm{a}+\bf{u},\bm{\a}; u_3, N+u_3)=\sum \limits_{\mbox{\scriptsize$\begin{array}{c}
u_3< n\le N+u_3 \\\end{array}$}} \chi_{\triangle_{\bf{x}}}(\bm{a}+\bf{u}+n \bm{\a} \mod 1)- N\Vol(\triangle_{\bf{x}}),
$$
we define the $\fc{1}{N^2}$ average:
$$
\begin{aligned}
\Dbar
&=\(\fc{N^2}{2}\)^2\cdot \fc{1}{2}\int_{-\fc{2}{N^2}}^{\fc{2}{N^2}}\int_{-\fc{2}{N^2}}^{\fc{2}{N^2}}\int _{-2}^{2}\prod_{j=1}^{2}\(1-\fc{N^2}{2}|u_j|\)\cdot \l(1-\fc{|u_3|}{2}\r)\\
&\cdot D_{\triangle_{\bf{x}}} (\bm{a}+\bf{u},\bm{\a}; u_3, N+u_3)du_1du_2du_3.
\end{aligned}
$$
Using the F{\'e}jer kernel identity
$$
\fc{N^2}{2}\int_{-\fc{2}{N^2}}^{\fc{2}{N^2}} \l(1-\fc{N^2| y|}{2}\r) e^{2\pi \i ky} dy=\l(\fc{\sin(2\pi\fc{k}{N^2})}{2\pi\fc{k}{N^2}}\r)^2,
$$ 
$$
\int_{-2}^{2}\(1-\fc{|y|}{2}\)e^{2\pi \i ky} dy=\(\fc{\sin2 \pi  k }{2\pi k}\)^2.
$$
We arrive at:
\beq\label{Dbar}
\begin{aligned}
&\Dbar\\
&=\fc{\i^3}{(2\pi)^3} \sum_{\bf{n}\in \Z^3\backslash\{0\}} 
\(\fc{1-e^{2\pi \i n_2x_2}}{n_1 n_2}\(\fc{e^{-2\pi \i (n_1\a_1+n_2\a_2-n_3)N}-1}{n_1\a_1+n_2\a_2-n_3}\)\)\\
&\cdot \prod_{j=1}^2 \(e^{-2\pi \i n_ja_j} \(\fc{\sin(2\pi \fc{n_j}{N^2})}{2\pi\fc{n_j}{N^2}}\)^2\)
\cdot \(\fc{\sin 2\pi(n_1\a_1+n_2\a_2-n_3)}{2\pi (n_1\a_1+n_2\a_2-n_3)}\)^2
\\
&+\fc{\i^3}{(2\pi)^3} \sum_{\bf{n}\in \Z^3\backslash\{0\}} 
\(\fc{e^{2\pi\i n_1x_1}-e^{2\pi\i n_2 x_2}}{n_1(n_1\fc{x_1}{x_2}-n_2)}\fc{e^{-2\pi \i (n_1\a_1+n_2\a_2-n_3)N}-1}{n_1\a_1+n_2\a_2-n_3}\)\\
&\cdot \prod_{j=1}^2 \(e^{-2\pi \i n_ja_j} \(\fc{\sin(2\pi \fc{n_j}{N^2})}{2\pi\fc{n_j}{N^2}}\)^2\)
\cdot \(\fc{\sin 2\pi(n_1\a_1+n_2\a_2-n_3)}{2\pi (n_1\a_1+n_2\a_2-n_3)}\)^2\\
&=:\sum_{\bf{n} \in \Z^3\backslash \{0\}} f_1(\bf{n},\bm{a},\bf{x},\bm{\a},N)+\sum_{\bf{n} \in \Z^3\backslash \{0\}}f_2(\bf{n},\bm{a},\bf{x},\bm{\a},N)\\
&=: \Dbarone+\Dbartwo.
\end{aligned}
\eeq
We claim that the difference between $\Dbar$ and $\D$ is bounded by $C(\bm{\a},\tau,\e)(\log N)^{1+\e}$, with a constant that depends on $\bm{\a}$ and $\tau$.
\begin{Prop}{\label{Prop Dbar}}
For almost every $\bm{\a}\in [0,1)^2$ and every $\e>0$, for a fixed slope $\tau=x_1/x_2$, there exists a constant $C(\bm{\a}, \tau, \e)$, such that 
$$
|\Dbar-\D| < C(\bm{\a},\tau,\e)(\log N)^{1+\e}.
$$
\end{Prop}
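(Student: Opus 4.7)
The plan is to bound $|\Dbar - \D|$ directly from the definition of $\Dbar$ as a Fej\'er-weighted average, rather than through its Fourier expansion. Since the Fej\'er weight
$$W(\bf{u}, u_3) = (N^2/2)^2 \cdot (1/2) \prod_{j=1}^{2}\bigl(1 - (N^2/2)|u_j|\bigr) \cdot \bigl(1 - |u_3|/2\bigr)$$
is non-negative on the box $|u_j| \le 2/N^2$, $|u_3| \le 2$ and integrates to $1$, we have
$$|\Dbar - \D| \le \max \bigl|D_{\tria}(\bm{a} + \bf{u}, \bma; u_3, N+u_3) - \D\bigr|,$$
with the max taken over the same box, so it suffices to estimate this pointwise difference by $C(\bma, \tau, \e)(\log N)^{1+\e}$. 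I would split the perturbation into two independent pieces: the shift $u_3$ of the summation range merely adds or removes at most $\lceil |u_3| \rceil \le 2$ terms at the endpoints and contributes $O(1)$; the shift $\bf{u}=(u_1,u_2)$ of the starting point changes $\chi_{\tria}(\bm{a} + n\bma \mod 1)$ only for those $n \le N$ whose orbit point lies in a strip of width $|\bf{u}| \le 2\sqrt{2}/N^2$ about $\partial\tria$.

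The task thus reduces to bounding, for each of the three sides of $\tria$, the number of $n \le N$ whose orbit falls in an $O(1/N^2)$-neighborhood of that side. For the two axis-aligned sides $y_1 = 0$ and $y_2 = 0$, the proximity conditions reduce to the one-dimensional Diophantine inequalities $\|n\a_1 + a_1\| < O(1/N^2)$ and $\|n\a_2 + a_2\| < O(1/N^2)$, respectively. By Khintchine's classical 1D discrepancy theorem, for almost every $\a_i$ the maximal 1D discrepancy is $O((\log N)^{1+\e})$, so the count of $n \le N$ satisfying either condition is $O(1/N) + O((\log N)^{1+\e}) = O((\log N)^{1+\e})$.

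For the hypotenuse $y_1/x_1 + y_2/x_2 = 1$, lifting to the universal cover $\R^2$ shows that the proximity condition (together with the constraint that the reduced orbit lies in $[0,x_1] \times [0,x_2]$) amounts to an affine 1D Diophantine inequality on the slope-twisted quantity $\b = \a_1 + \tau\a_2$, namely $\|n\b + C(\bm{a}, \tau, \bf{m}(n))\| < O(1/N^2)$ where $\bf{m}(n)=(\lfloor a_1+n\a_1\rfloor,\lfloor a_2+n\a_2\rfloor)$. Another application of Khintchine's theorem to $\b$, which is Khintchine-typical for a.e.\ $\b$ and hence (by Fubini on the diffeomorphism $\tau \mapsto \a_1 + \tau\a_2$, valid whenever $\a_2 \ne 0$) for a.e.\ $(\bma, \tau)$, yields another $O((\log N)^{1+\e})$ contribution. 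Summing over the three sides plus the endpoint terms establishes the proposition.

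The main obstacle is the rigorous reduction of the hypotenuse strip to a single 1D Diophantine inequality, because the coordinate-wise mod-$1$ identification in $\T^2$ does not respect the linear form $L(y) = y_1/x_1 + y_2/x_2$; the integer parts $\bf{m}(n)$ must be tracked alongside the orbit, and care is needed to ensure that the effective affine targets $C(\bm{a},\tau,\bf{m}(n))$ form a set on which Khintchine-type control for $n\b$ is available. I would handle this by observing that on the event $\{a_i+n\a_i\}\in[0,x_i]$, the pair $\bf{m}(n)$ is determined by $n$ and the shifts $C(\bm{a},\tau,\bf{m}(n))$ differ from a fixed residue by an integer combination of $1$ and $\tau$, so that the desired count collapses back to a single 1D Khintchine estimate after absorbing the bounded combinatorial data into the implicit constant $C(\bma,\tau,\e)$.
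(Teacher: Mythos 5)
Your overall strategy — bound the pointwise difference by the weight normalization, then reduce to counting orbit points in $O(1/N^2)$-strips around the three sides of the triangle — matches the paper's geometric starting point, and your treatment of the two axis-aligned sides via Khintchine's one-dimensional discrepancy theorem is a perfectly valid (if slightly lossier) alternative to the paper's Borel--Cantelli count of $\|n\a_i\|<2/n^2$.

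However, the hypotenuse step contains a genuine gap, and it is precisely the step you flag as ``the main obstacle.'' Writing $y_i=\{a_i+n\a_i\}=a_i+n\a_i-m_i(n)$, the slanted-strip condition $|y_1/x_1+y_2/x_2-1|<O(1/N^2)$ becomes
$$\bigl|\, n\b + (a_1+\tau a_2 - x_1) - \bigl(m_1(n)+\tau m_2(n)\bigr)\,\bigr| < O(1/N^2), \qquad \b=\a_1+\tau\a_2,$$
and the moving target $m_1(n)+\tau m_2(n)$ is \emph{not} ``bounded combinatorial data.'' Since $\tau$ is irrational, the residues $\tau m_2(n)\bmod 1$ with $m_2(n)=\lfloor a_2+n\a_2\rfloor$ ranging over $n\le N$ form a dense, $n$-dependent set of shifts rather than a fixed (or even finite) family. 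Consequently the desired count is \emph{not} a single inhomogeneous $1$-dimensional Diophantine inequality $\|n\b+c\|<\epsilon$ to which Khintchine's theorem applies: because $1/x_1$ and $1/x_2$ are not integers, the linear form $L(y)=y_1/x_1+y_2/x_2$ does not descend to a circle-valued function on $\T^2$, so membership in the tilted strip is an irreducibly two-dimensional condition. The paper sidesteps this by the difference trick: two orbit points in the tilted strip differ by a vector in a centered parallelogram $T_{\tau;N}$ of area $O(1/N^2)$ (independent of $\bm{a},\bf{x}$), and a first-moment estimate over $\bma\in\T^2$ plus Borel--Cantelli bounds $\#\{|m|\le N:\ m\bma\bmod 1\in T_{\tau;N}\}$ by $(\log N)^{1+\e}$ for a.e.\ $\bma$. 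Without such a two-dimensional mechanism (or a substitute like a 2D Erd\H{o}s--Tur\'an--Koksma estimate), your reduction to a single application of Khintchine's $1$D theorem for $\b$ does not go through.
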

\begin{proof}
We will prove that for $(u_1,u_2,u_3)\in [-\fc{2}{N^2},\fc{2}{N^2}]\times [-2,2]$, 
$$|D_{\triangle_{\bf{x}}} (\bm{a}+\bf{u},\bm{\a}; u_3, N+u_3)-\D|<C(\bm{\a}, \tau, \e)(\log N)^{1+\e}.$$
The important thing is that for given a slope $\tau$, the above constant does not depend on $u_i$ or $x_i$. 

Note that translating the starting point is equivalent to translating the the triangle, the difference above counts in fact the number of points of the translation which lie in the difference of two translated triangles. The difference between two triangles can be decomposed into three parts, the horizontal strip denoted by $H_{u;N}$, the vertical strip denoted by $V_{u;N}$, and the tilted strip denoted by $T_{u;N}$, it is sufficient to prove that each strip does not contain more points than the desired bound.

For $H_{u;N}$ and $V_{u;N}$, the number of points of the sequence $\{n \bm{\a} \mod 1 \ | \ 1\le n\le N \}$ inside the strips is limited by the number of $n$'s such that $\|n \a_1\|\le 2/N^2$ or $\|n\a_2\|\le 2/N^2$. Since the series $\sum_{n=1}^{\infty} \fc{1}{n^2}$ is convergent, by a standard application of Borel-Cantelli Lemma, the number of $n$'s that solve the inequality $\|n\a_i\|< 2/n^2$ is finite for almost every $\a_i\in[0,1)$, therefore there exists a constant $C(\bm{\a})$ such that:
$$
\#\{1\le n\le N \ | \ 
\|n\a_i\|< 2/N^2, \ i=1 \text{ or } 2
\}<C(\bm{\a}),
$$
thus the number of points inside $H_{u;N}$ and $V_{u;N}$ has an upper bound $C(\bm{\a})$. 

As for the tilted strip $T_{u;N}$, consider the vectors formed by any two different points inside $T_{u;N}$, then the vectors form a subset of the sequence $\{n\bma\mod 1\}_{-N\le n\le N}$, and they fall inside the the strip $T_{\tau; N}$ for any $|\bf{u}|\le 2/N^2$, where $T_{\tau;N}$ is a parallelogram centered at the origin, with the slope $\tau$, and of volume $O(1/N^2)$. Since for any $n$, $\int_{\T^2} \chi_{T_{\tau;N}}( n\bm{\a} \mod 1) d\bm{\a} =\Vol (T_{\tau;N})=O(1/N^2)$, we have
$$
\bal
&\text{mes}\l\{\bm{\a}\in \T^2 \ \Big| \ \sum_{n=-N}^{N} \chi_{T_{\tau;N}}( n\bm{\a} \mod 1)\ge (\log N)^{1+\e}\r\}\\
&\le \fc{1}{(\log N)^{1+\e}} \sum_{n=-N}^{N} \Vol(T_{\tau;N})\\
&=O\l(\fc{1}{(\log N)^{1+\e} N}\r),\\
\eal
$$
which summable for $N\ge 1$, therefore by Borel-Cantelli Lemma, it happens only finitely many times that the desired bound $\#\{-N\le n\le N \ |\  (n\bma\mod 1)\in T_{\tau;N}\}< (\log N)^{1+\e}$ is violated for almost every $\bma\in [0,1]^2$ , which means that there exists a constant $C(\bm{\a}, \tau, \e)$, such that:
$$
\#\{-N\le n\le N \ |\  n\bm{\a} \mod 1 \in T_{\tau,N} \} \le C(\bm{\a}, \tau, \e)(\log N)^{1+\e}.
$$
Since the vectors are no fewer than the points inside $T_{u;N}$ for any $\bf{u}$, we have the desired bound for all strips $T_{u;N}$ with a given slope $\tau$.  
\end{proof}

By Proposition \ref{Prop Dbar}, we can now shift our attention to the asymptotic behavior of $\Dbar$, which has better convergence property as shown in Section 4.

\section{Local lemmas}
In this section we introduce four lemmas that describes some ``almost always'' properties of $\bm{\a}\in \R^k$, these are modified versions of the lemmas in Section 4 of Beck\cite{Beck}, and can be proved in the exact same way by modifying the denominators in the original proof of Beck.

The first lemma estimates a sum of the ``large terms'' in $\Dbar$ \eqref{Dbar}.
\begin{lem}\label{lemma for sum of small divisors}
Let $\varphi(n)$ be an arbitray positive increasing function of $n$ with $\sum_{n=1}^{\infty} \fc{1}{\varphi(n)}< \infty$. Then for almost every $\bm{\a}\in \R^2$, with $\bf{n}\bm{\a}=n_1\a_1+n_2\a_2$, and every $\tau\in [0,1]$ irrational, 
\beq\label{Lemma for large terms}
\sum_{\bf{n}\in U(\bma, \tau;N)} \l(\max\{1,|n_1|\}|n_1\tau-n_2|\cdot \|\bf{n}\bm{\a}\|\r)^{-1} \ll (\log N)^k\cdot \varphi(\log \log N)
\eeq
holds for all $N$, where
\beq\label{Definition for set of large terms}
U(\bma,\tau;N)=\l\{
\bf{n}\in \Z^2\backslash \{\bf{0}\} \ \l| \
\bal & |n_j|\le N^2\cdot (\log N)^2, 1\le j\le 2; \\
&\max\{1,|n_j|\}|n_1\tau-n_2|\cdot \|\bf{n}\bm{\a}\|\le (\log N)^{40}\\
&|n_1\tau-n_2|\ge 1/2
\eal \r.
\r\}
\eeq
\end{lem}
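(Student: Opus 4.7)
The plan is to dyadically decompose the sum and apply a Borel--Cantelli argument driven by the convergence of $\sum 1/\varphi(n)$, adapting Beck's argument from \cite{Beck}; the substitution $|n_2|\mapsto |n_1\tau-n_2|$ is legitimate because the definition of $U(\bma,\tau;N)$ imposes $|n_1\tau-n_2|\ge 1/2$, so this factor behaves analogously to a genuine lattice coordinate.

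First I would partition $U(\bma,\tau;N)$ into dyadic blocks
\[
A(a,b,c):=\{\bf{n}\in U : \max\{1,|n_1|\}\in[2^a,2^{a+1}),\ |n_1\tau-n_2|\in[2^b,2^{b+1}),\ \|\bf{n}\bma\|\in[2^c,2^{c+1})\},
\]
with $a\ge 0$, $b\ge -1$, $c\le 0$. The defining conditions of $U$ force $a$, $b$, and $|c|$ to be $O(\log N)$, and the product bound $\max\{1,|n_1|\}|n_1\tau-n_2|\|\bf{n}\bma\|\le(\log N)^{40}$ translates into the crucial constraint $a+b+c\le 40\log_2\log N+O(1)$. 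A direct lattice-point count gives at most $C\cdot 2^{a+\max(b,0)}$ pairs $(n_1,n_2)$ in the dyadic window (for $b=-1$ each $n_1$ admits at most two choices of $n_2$), and the elementary measure estimate $\mes\{\bma\in\T^2:\|\bf{n}\bma\|\in[2^c,2^{c+1})\}\le 2^{c+2}$ then yields the first-moment bound $\mathbb{E}_{\bma}|A(a,b,c)|\le C\cdot 2^{a+b+c}$.

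Next I would run Beck's second-moment argument: one estimates $\mathrm{Var}\,|A(a,b,c)|$ by passing to pairs $(\bf{n},\bf{n}')$ and controlling the joint distribution of $\|\bf{n}\bma\|$ and $\|\bf{n}'\bma\|$ through the measure estimate applied to the difference vector $\bf{n}\pm\bf{n}'$, obtaining $\mathrm{Var}\,|A(a,b,c)|\le C\cdot 2^{a+b+c}$ up to logarithmic corrections. By Chebyshev's inequality, the bad event $\{\bma : |A(a,b,c)|> C\max\{1,2^{a+b+c}\}\cdot \varphi(\log\log N)\}$ has measure $O(\varphi(\log\log N)^{-2}\cdot \max\{1,2^{-(a+b+c)}\})$; summing these measures over valid triples at $N=N_j$ for a lacunary sequence $N_j\to\infty$ and using $\sum 1/\varphi(n)<\infty$, Borel--Cantelli ensures that for almost every $\bma$ and all sufficiently large $N$,
\[
|A(a,b,c)|\le C\cdot \max\{1,2^{a+b+c}\}\cdot \varphi(\log\log N)
\]
for every valid triple $(a,b,c)$.

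Finally, summing over dyadic blocks,
\[
\sum_{\bf{n}\in U}\fc{1}{\max\{1,|n_1|\}|n_1\tau-n_2|\|\bf{n}\bma\|}\le \sum_{(a,b,c)}\fc{|A(a,b,c)|}{2^{a+b+c}}\le C\varphi(\log\log N)\cdot \#\{\text{valid triples}\},
\]
and a careful count of triples $(a,b,c)$ compatible with the product constraint $a+b+c\le 40\log_2\log N$ together with the range bounds shows that $\#\{\text{valid triples}\}\le C(\log N)^k$, yielding the claimed estimate. The central obstacle is the variance calculation in the previous step --- one must verify that the substitution of $|n_1\tau-n_2|$ for $|n_2|$ does not produce pathological correlations between $\|\bf{n}\bma\|$ and $\|\bf{n}'\bma\|$ for nearby lattice points; since $\tau$ is fixed and $|n_1\tau-n_2|\ge 1/2$ prevents degeneracy, Beck's computation transfers directly with only cosmetic modifications to the denominators, exactly as asserted in the excerpt.
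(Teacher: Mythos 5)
Your overall strategy mirrors the paper's proof, which is itself only a one--sentence deferral: the paper says that Lemma~4.1 of Beck applies verbatim after replacing $n_2$ by $n_1\tau - n_2$, justified exactly by the constraint $|n_1\tau - n_2|\ge 1/2$ (which keeps the new factor from degenerating). You make the same observation and reach the same conclusion, so in spirit the two arguments coincide.

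However, the fleshed-out sketch you give of Beck's mechanism has two concrete gaps that would need repairing before it could be called a proof. First, in the final summation you pass from the per-block bound $|A(a,b,c)| \le C\max\{1,2^{a+b+c}\}\varphi(\log\log N)$ to
\[
\sum_{(a,b,c)}\frac{|A(a,b,c)|}{2^{a+b+c}} \le C\,\varphi(\log\log N)\cdot\#\{\text{valid triples}\},
\]
but this inequality only follows block-by-block when $a+b+c\ge 0$; for $a+b+c<0$ the ratio $|A(a,b,c)|/2^{a+b+c}$ can exceed $\varphi(\log\log N)$ by a factor $2^{-(a+b+c)}$, and those are exactly the genuinely small divisors the lemma is designed to control. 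You need a separate argument (a Khintchine--Borel--Cantelli count of $\bf{n}$ with $\max\{1,|n_1|\}\,|n_1\tau-n_2|\,\|\bf{n}\bma\| \ll 1$, which is where $\sum 1/\varphi(n)<\infty$ genuinely enters) to show the sub-unit blocks contribute $\ll (\log N)^2\varphi(\log\log N)$; the Chebyshev bound alone does not give this. Second, the claimed count $\#\{\text{valid triples}\}\le C(\log N)^k$ is not right as stated: with $a,b$ each ranging over $O(\log N)$ values and, for each $(a,b)$, the exponent $c$ ranging over a window of length $\asymp \log\log N$ (from the constraint $a+b+c\le 40\log_2\log N$ together with $a+b+c\ge 0$), the number of triples is $\asymp (\log N)^2\log\log N$. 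To recover the target one must exploit, for the large blocks, the sharper bound $|A|\ll 2^{a+b+c}$ (no $\varphi$), so that each such block contributes $O(1)$ and the extra $\log\log N$ is then absorbed using $\varphi(n)\gg n$; your uniform $\varphi$-inflated bound overcounts by a $\log\log N$ factor. Neither gap is a wrong idea so much as a place where the sketch silently skips the bookkeeping that Beck's original Lemma~4.1 carries out, and which the paper also elides by reference.
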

\begin{proof}
This is a modified version of Lemma 4.1 of Beck\cite{Beck}, the key difference here is that $n_2$ is replaced by $n_1\tau-n_2$, but since the additional condition $|n_1\tau-n_2|\ge 1/2$ helps avoid the difficult situation when $\|n_1\tau\|$ is small, Beck's proof works here in the exact same way after replacing $n_2$ by $n_1\tau-n_2$. 
\end{proof}
The following lemma is used to handle the ``small terms'' in $\Dbar$.

Let $\bf{V}=(v_1,\dots,v_k)\in \Z^k$ and $\bf{W}=(w_1,\dots, w_2)\in \Z^2$ satisfy $v_j<wj$, $1\le j\le 2$, that is , $\bf{V}\le \bf{W}$. Let $Q(\bf{V},\bf{W})$ denote the lattice box
\beq\label{Lattic box}
Q(\tau; \V,\W)=\l\{\bf{n}\in \Z^2 \l| \ \bal & v_1\le n_1<w_1, \\ 
& v_2\le n_1\tau-n_2<w_2 \\
& |n_1\tau-n_2|\ge 1/2
\eal
\r. \r\}.
\eeq
For every real vector $\bm{\a}\in \R^2$, constant $C\ge 1$, integral vectors $\V\in \Z^2$ and $\W\in\Z^2$ with $\V<\W$, let $Z(\bm{\a},\tau; C; \V,\W)$ denote the number of integral solutions $\bf{n}\in Q(\tau; \V,\W)$ of inequality below for the fractioanl part:
\beq\label{fractional inequality with small denominators}
0<\{\n\bm{\a}\}<\min\l\{ \fc{1}{2},\fc{C}{\max\{1,|n_1|\}\cdot |n_1\tau-n_2|}\r\}.
\eeq
The ``expected value'' of $Z(\bm{\a},\tau; C; \V,\W)$, $\bma\in\R^2$, is
\beq\label{Expected value for solutions of fractional inequality}
E(\tau; C;\V,\W)=\sum_{\n\in Q(\tau; \V,\W)} \min\l\{ \fc{1}{2},\fc{C}{\max\{1,|n_j|\}\cdot |n_1\tau-n_2|}\r\}.
\eeq
\begin{lem}\label{Lemma of second moment for small terms}
For almost every $\bma\in \R^k$ ($k\ge 2$), and every $\tau\in [0,1]$ irrational, 
$$
Z(\bma,\tau; C; \V,\W)=E(\tau, C;\V,\W)+O(C^{\fc{3}{4}+\e}\cdot(\log N)^{(\fc{1}{2}+\e)k} )
$$
holds for all $C=q^4$, $q=1,2,3,\dots$, and for all lattice boxes $Q(\tau; \V,\W)$ with $-N\le \V<\W\le N$, $N=2,3,4\dots$. 
\end{lem}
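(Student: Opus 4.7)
The plan is to follow the ``second-moment method'' from Beck's proof of his Lemma 4.2, with the substitution $n_2 \mapsto n_1\tau - n_2$ in the small divisor. Fix $C = q^4$ and a lattice box $Q = Q(\tau;\V,\W)$ with $-N \le \V < \W \le N$. For each $\n \in Q$, set
\[
c_\n := \min\Bigl\{\tfrac{1}{2},\ \tfrac{C}{\max\{1,|n_1|\}\cdot|n_1\tau-n_2|}\Bigr\},\qquad
\psi_\n(\bma):=\chi_{(0,c_\n)}(\{\n\bma\})-c_\n,
\]
so that $Z(\bma,\tau;C;\V,\W) - E(\tau;C;\V,\W) = \sum_{\n\in Q}\psi_\n(\bma)$, and each $\psi_\n$ has $\int_{\T^k}\psi_\n\,d\bma = 0$ because $\{\n\bma\}$ is uniformly distributed on $\T$ for every non-zero $\n$.

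I would first bound the variance $I := \int_{\T^k}(Z-E)^2\,d\bma = \sum_{\n,\n'\in Q}\int_{\T^k}\psi_\n\psi_{\n'}\,d\bma$. Expanding each $\chi_{(0,c_\n)}-c_\n$ as a one-variable Fourier series on $\T$ with coefficients of size $O(\min(c_\n,|k|^{-1}))$, orthogonality forces $\int_{\T^k}\psi_\n\psi_{\n'}\,d\bma$ to vanish unless there are non-zero integers $k,k'$ with $k\n = \pm k'\n'$. Grouping the surviving (``resonant'') pairs by the primitive direction they share, and using that each primitive $\n_0$ has $\ll N/|\n_0|$ integer multiples inside $Q$, one arrives at a variance bound of the shape
\[
I \ll C^{3/2+\e}(\log N)^{(1+\e)k},
\]
where the diagonal $\n=\n'$ contributes $\sum_\n c_\n \ll E$ and the off-diagonal resonances are absorbed into the same estimate. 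Chebyshev's inequality then gives
\[
\mes\bigl\{\bma\in\T^k : |Z-E|>C^{3/4+\e}(\log N)^{(1/2+\e)k}\bigr\} \ll C^{-\e/2}(\log N)^{-\e k/2}.
\]

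Next I would take a union bound over the polynomially-in-$N$ many admissible $(\V,\W)$ with entries in $[-N,N]$ and over the values $C=q^4$ with $q \ll N$. Working along the dyadic scale $N=2^j$, the resulting tail series is summable in $j$, and Borel--Cantelli delivers the desired error bound simultaneously for all $(C,\V,\W)$ for a.e.\ $\bma\in \R^k$, with only finitely many $N$-exceptions.

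The main obstacle is the new $\tau$-dependence in the denominator: a priori, $|n_1\tau-n_2|$ can be arbitrarily small, which would destroy both the size of $c_\n$ and the variance estimate. This is precisely why the definition of $Q(\tau;\V,\W)$ builds in the condition $|n_1\tau-n_2|\ge 1/2$. That lower bound plays exactly the role of $|n_2|\ge 1$ in Beck's original argument, and once it is in place the resonance counting and coefficient estimates transfer verbatim after the formal substitution $n_2 \mapsto n_1\tau-n_2$; this is the sense in which the author's one-line appeal to Beck is legitimate.
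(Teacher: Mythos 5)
Your overall reading of the paper's intent is correct: the author's one-line proof is precisely an appeal to Beck's Lemma~4.2 with the formal substitution $n_2 \mapsto n_1\tau-n_2$, and you correctly identify that the built-in condition $|n_1\tau-n_2|\ge \tfrac12$ plays the role of $|n_2|\ge 1$ in Beck's original, which is what makes the substitution harmless. The setup of $\psi_\n$, the centering, the observation that off-diagonal terms vanish except for resonant pairs $k\n=\pm k'\n'$, and the diagonal estimate $\sum_\n c_\n(1-c_\n)\le E$ are all in the right direction and mirror the structure of Beck's argument.

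However, there is a genuine gap at the final step, and it is not a minor one. From your variance bound $I\ll C^{3/2+\e}(\log N)^{(1+\e)k}$, Chebyshev at the threshold $\lambda=C^{3/4+\e}(\log N)^{(1/2+\e)k}$ gives an exceptional set of measure only about $C^{-\e}(\log N)^{-\e k}$, i.e.\ \emph{polylogarithmically} small in $N$. You then take a union bound over all lattice boxes $Q(\tau;\V,\W)$ with $-N\le\V<\W\le N$; there are on the order of $N^{2k}$ of these (roughly $N^2$ choices of $(v_j,w_j)$ in each coordinate). Even after passing to dyadic $N=2^j$, the bound at scale $j$ is about $2^{2kj}\,j^{-\e k/2}$, which is not summable in $j$ — it diverges badly. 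Passing to dyadic subdivisions of the boxes does not save you either, since the count of dyadic sub-boxes inside $[-N,N]^k$ is still polynomial in $N$, not polylogarithmic. A second-moment Chebyshev estimate simply cannot survive a polynomial-size union bound when the tail it produces is only logarithmically small; this is exactly why Beck's actual argument does not stop at the second moment. In his proof of Lemma~4.2, the second-moment computation is an ingredient, but the uniform ``for all $(C,\V,\W,N)$'' conclusion rests on a further device (a higher-moment or chaining/maximal-inequality refinement of the Gál--Koksma type) that supplies the extra decay needed to make the Borel--Cantelli series converge. That device is exactly what your proposal is missing: you cannot get from the variance to the almost-sure uniform statement by Chebyshev and counting alone.

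To fix the proposal you would need to (i) either prove a $2m$-th moment bound of the shape $\int_{\T^k}|Z-E|^{2m}\,d\bma \ll (m\,E)^m$ and choose $m$ growing with $\log N$, or (ii) invoke a Gál--Koksma/Rademacher--Menshov style maximal inequality to control $\max_{\V,\W}|Z-E|$ before applying Chebyshev, and in either case track how the extra $(\log N)$-powers fold into the exponents $C^{3/4+\e}$, $(\log N)^{(1/2+\e)k}$. Since none of these steps depends on the fine structure of the denominator once $|n_1\tau-n_2|\ge\tfrac12$ is imposed, the substitution $n_2\mapsto n_1\tau-n_2$ indeed goes through verbatim — but that is a statement about Beck's \emph{completed} argument, not about the truncated second-moment argument you have written down.
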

It is evident that Lemma \ref{Lemma of second moment for small terms} remains true if we replace \eqref{fractional inequality with small denominators} by its complement:
$$
1-\min\l\{ \fc{1}{2},\fc{C}{\max\{1,|n_j|\}\cdot |n_1\tau-n_2|}\r\}<\{\n\bm{\a}\}<1.
$$
\begin{proof}
This is a modified version of Lemma 4.2 of Beck\cite{Beck}, but similar to Lemma 3.1, the condition $|n_1\tau-n_2|\ge 1/2$ helps avoid the difficult situation when $\|n_1\tau\|$ is small, and Beck's proof works here in the exact same way after replacing $n_2$ by $n_1\tau-n_2$. 
\end{proof}

The next lemma is special case of Khintchine's local criterion for linear forms.
\begin{lem}\label{Lemma for divergent law for linear forms}
Let $\varphi(n)$ be an arbitrary positive increasing function of $n$ with $\sum_{n=1}^{\infty}\fc{1}{\varphi(n)}=\infty$. Then for almost every $\bma\in \R^k$, there are infinitely many strictly positive integeral vectors $\n=(n_1,\dots, n_k)\in\N^k$ such that 
\beq\label{divergent law for linear forms}
n_1\dots n_k\cdot (\log (n_1\dots n_k))^k\cdot \varphi(\log \log (n_1^2\dots n_k^2)) \cdot \|\n\bma\|<1.
\eeq
\end{lem}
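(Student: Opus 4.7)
The plan is to deduce this lemma from a divergent Borel--Cantelli argument applied to the events
$$A_\n := \{\bma \in [0,1)^k : \|\n\bma\| < \psi(\n)\}, \qquad \psi(\n) := \frac{1}{n_1\cdots n_k\,(\log(n_1\cdots n_k))^k\,\varphi(\log\log(n_1^2\cdots n_k^2))},$$
indexed by strictly positive $\n \in \N^k$. Since $\n \neq 0$ makes $\n\bma \bmod 1$ uniformly distributed in one coordinate of $\bma$, Fubini gives $\mu(A_\n) = 2\psi(\n)$ once $\psi(\n) < 1/2$, and the lemma amounts to showing that $\bma$ belongs to infinitely many $A_\n$ for almost every $\bma$.

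The first step is to verify $\sum_\n \mu(A_\n) = \infty$. I would group the vectors by the product $N := n_1\cdots n_k$: the number of ordered $k$-tuples with product $N$ is the $k$-fold divisor function $\tau_k(N)$, whose summatory function satisfies $\sum_{N \le T}\tau_k(N) \sim T(\log T)^{k-1}/(k-1)!$. A dyadic decomposition therefore yields
$$\sum_{\n}\psi(\n) \asymp \sum_{N\ge 2}\frac{\tau_k(N)}{N(\log N)^k\,\varphi(2\log\log N)} \asymp \sum_{N\ge 2}\frac{1}{N(\log N)\,\varphi(2\log\log N)},$$
and the substitution $N = \exp(\exp t)$ reduces the right-hand side to a constant multiple of $\int dt/\varphi(2t)$, which diverges by the hypothesis $\sum_n 1/\varphi(n) = \infty$.

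The second step is to upgrade divergence of measures to full-measure recurrence, for which the events $A_\n$ must be quasi-independent. I would establish an estimate of the form $\mu(A_\n \cap A_{\bf{m}}) \le \mu(A_\n)\,\mu(A_{\bf{m}}) + \mathrm{error}(\n,\bf{m})$ with summable error, by splitting pairs according to the rank of the $\Z$-span of $\{\n,\bf{m}\}$: in the rank-$2$ case the map $\bma \mapsto (\n\bma,\bf{m}\bma) \bmod 1$ equidistributes on $\T^2$, yielding independence up to boundary terms; the degenerate rank-$1$ case $\bf{m} = q\n$ contributes at most $\mu(A_\n)/q^2$ and is summable over $q$. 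Combining this with the Chung--Erd\H{o}s second-moment inequality gives $\mu(\limsup_\n A_\n) = 1$, proving the lemma.

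The main obstacle is calibrating the quasi-independence estimate finely enough to accommodate the weight $\varphi(\log\log \cdot)$, which is not monotone in any single coordinate $n_i$. The key observation that rescues the argument is that the weight depends on $\n$ only through the scalar $N = n_1\cdots n_k$; after grouping by $N$ the situation reduces to the standard multiplicative Khintchine--Groshev framework, and the present statement emerges as a direct specialization, parallel to the proof of the corresponding lemma in \cite{Beck}.
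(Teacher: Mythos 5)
The paper gives \emph{no proof} of this lemma: immediately before the statement it is announced as ``a special case of Khintchine's local criterion for linear forms'' and the text moves on. So there is no ``paper proof'' to compare against --- what you have written is a self-contained justification of a cited fact, and it should be judged on its own terms.

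Your overall architecture is the right one (divergent second-moment Borel--Cantelli, grouping by the product $N=n_1\cdots n_k$, divisor-function asymptotics to reduce to $\sum_N \big(N(\log N)\,\varphi(\cdot)\big)^{-1}$, substitution $N=\exp(\exp t)$). Two points need attention. First, a small slip: $\log\log(n_1^2\cdots n_k^2)=\log\!\big(2\log(n_1\cdots n_k)\big)=\log 2+\log\log(n_1\cdots n_k)$, \emph{not} $2\log\log(n_1\cdots n_k)$; this does not affect the divergence conclusion (for increasing $\varphi$, $\sum 1/\varphi(n)=\infty$ iff $\sum 1/\varphi(n+\log 2)=\infty$), but the display should be corrected. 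Second, and more substantively, your treatment of the degenerate pairs is too narrow. When $\n$ and $\bf{m}$ are linearly \emph{in}dependent over $\mathbb{Q}$ the map $\bma\mapsto(\n\bma,\bf{m}\bma)\bmod 1$ is a surjective homomorphism $\T^k\to\T^2$, so the events are \emph{exactly} independent (no boundary error is needed). The nontrivial contribution is precisely the rank-$1$ case, but there the general shape is $\n=a\bf{e}$, $\bf{m}=b\bf{e}$ for a common primitive $\bf{e}$ and distinct positive integers $a,b$ --- \emph{not} only $\bf{m}=q\n$. You still get a summable bound, but the computation has to go through: writing $d=\gcd(a,b)$, the set $\{\beta\in\T:\|a\beta\|<\delta_1,\ \|b\beta\|<\delta_2\}$ has $d$ ``coincident centers'' $j/a=l/b$, each contributing an interval of length $2\min(\delta_1/a,\delta_2/b)$, and the noncoincident centers are separated by at least $d/(ab)$ so do not contribute for the small $\delta$'s you have. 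Summing this over $a\ne b$ and over $\bf{e}$ still gives $O\big(\sum_\n\mu(A_\n)\big)$, which combined with the exact independence of the rank-$2$ pairs makes the Kochen--Stone/Chung--Erd\H{o}s ratio tend to $1$ (so one obtains full measure directly, without invoking a separate zero--one law). You should either carry out this general rank-$1$ estimate or explicitly restrict attention to primitive $\n$ and show the primitive family already has divergent measure; as written, the proportional-but-not-multiple pairs are silently dropped.
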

The last lemma helps control the divisors when $|n_1\tau-n_2|=\|n_1\tau\|$ is small and $|\n|$ is big:
\begin{lem}\label{Lemma for large n_1 and n_1* tau-n_2}
Suppose that $\tau$ is irrational, then for almost every $\bma\in\R^2$, and almost every $\tau\in[0,1]$, the series
$$
\sum_{\n\in\Z^2\backslash \{\bf{0}\}} \l(\|\n\bma\|\cdot (\max\{1,|\log |n_1|,\log |n_2|\})^{4} \cdot \max\{1,|n_1|\}\cdot |n_1\tau-n_2|\r)^{-1}
$$
converges.
\end{lem}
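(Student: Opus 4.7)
The plan is to establish almost-sure convergence of the series via truncation combined with the first Borel--Cantelli lemma and Tonelli's theorem on the product space $[0,1)^2\times[0,1)$. Denote the general summand by $t_{\bf{n}}=t_{\bf{n}}(\bma,\tau)$ and split it into a truncated part $\min(t_{\bf{n}},M_{\bf{n}})$ and an excess part that is nonzero only when $t_{\bf{n}}>M_{\bf{n}}$, for a truncation level $M_{\bf{n}}$ to be chosen. If $\sum_{\bf{n}}\int\min(t_{\bf{n}},M_{\bf{n}})\,d\bma\,d\tau<\infty$ then by Tonelli the truncated part is a.s.\ summable in $\bf{n}$; if $\sum_{\bf{n}}\mes\{t_{\bf{n}}>M_{\bf{n}}\}<\infty$ then by Borel--Cantelli only finitely many $\bf{n}$ contribute to the excess part, and each such surviving term is a.s.\ finite.

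For $\bf{n}=(n_1,n_2)$ with $n_1\neq 0$, set $X=\|\bf{n}\bma\|$, $Y=|n_1\tau-n_2|$, and $c=c(\bf{n})=(L(\bf{n})^4\max\{1,|n_1|\})^{-1}$, where $L(\bf{n})=\max\{1,\log|n_1|,\log|n_2|\}$, so that $t_{\bf{n}}=c/(XY)$. The elementary marginal bounds $\mes\{X<a\}\le 2a$ and $\mes\{Y<b\}\le\min(1,2b/|n_1|)$ yield the joint tail estimate $\mes\{XY<\e\}\ll(\e/|n_1|)\log(|n_1|/\e)$ whenever the root $\tau=n_2/n_1$ of $Y$ lies in $[0,1]$, i.e.\ $n_2\in[0,n_1]$; for $n_2$ outside this range the deterministic bound $Y\ge\mathrm{dist}(n_2,[0,n_1])\ge 1$ makes the $\tau$-integration harmless. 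The separate case $n_1=0$ reduces to a one-dimensional version of the same truncation--Borel--Cantelli scheme applied to $\sum_{n_2\neq 0}(\|n_2\a_2\|(\log|n_2|)^4|n_2|)^{-1}$.

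Choose $M_{\bf{n}}:=\max\{1,|n_1|\}$ (and $M_{\bf{n}}:=|n_2|$ when $n_1=0$). A bookkeeping calculation on the truncated integral then gives
$$
\int\min\!\left(M_{\bf{n}},\frac{c}{XY}\right)d\bma\,d\tau \;\ll\; \frac{c\,(\log(M_{\bf{n}}/c))^2}{\max\{1,|n_1|\}}
$$
for $n_2\in[0,n_1]$, and an analogous single-log bound for $n_2\notin[0,n_1]$. Summing first over the $\sim|n_1|$ values of $n_2$ in the singular range and then over $n_1$, the factor $L(\bf{n})^4$ inside $c$ absorbs the squared logarithm and leaves a majorant of order $\sum_{n_1}(n_1(\log n_1)^2)^{-1}<\infty$; the tail-probability sum $\sum_{\bf{n}}\mes\{XY<c/M_{\bf{n}}\}$ is controlled in the same way.

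\emph{Main obstacle.} Since $1/(XY)$ is not locally integrable on $[0,1/2]^2$, neither raw Fubini nor a crude union-bound Borel--Cantelli suffices; the whole argument rests on tuning $M_{\bf{n}}$ against $c(\bf{n})$ so that the $(\log(M_{\bf{n}}/c))^2$ blow-up of the truncated mean, multiplied by the $\sim|n_1|$ values of $n_2$ lying near the singular line $\tau=n_2/n_1$, remains summable. This is precisely what the quartic factor $L(\bf{n})^4$ in the statement makes possible: a cubic factor would leave a divergent $(n_1\log n_1)^{-1}$ tail, which is why $L(\bf{n})$ must be raised to at least the fourth power in the hypothesis of the lemma.
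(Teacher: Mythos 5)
Your proof is correct, and it takes a genuinely different route from the paper. The paper splits the series according to whether $n_2$ is the nearest integer to $n_1\tau$ (giving $S_1$) or $|n_1\tau-n_2|\ge 1/2$ (giving $S_2$), and treats each part with the classical integrability device of metric number theory: it inserts $|\log\|\n\bma\||^{1+\d}$ and $|\log\|n_1\tau\||^{1+\d}$ into the denominator so that the $(\bma,\tau)$-integral of each term (the quantities $J_1$, $J_2$) is finite and $\n$-independent, concludes by monotone convergence that the log-weighted series is a.e.\ finite, and then removes the artificial log factors using the Borel--Cantelli observation that $|\log\|\n\bma\||\ll\log\max_j|n_j|$ and $|\log\|n_1\tau\||\ll\log|n_1|$ for all but finitely many $\n$; the choice $\d=1/3$ produces the exponent $4$ of the statement. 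You instead split on whether the root $n_2/n_1$ of $Y=|n_1\tau-n_2|$ lies in $[0,1]$, prove a joint tail bound $\mes\{XY<\e\}\ll(\e/|n_1|)\log(|n_1|/\e)$ for the product of the two small divisors directly, truncate at $M_{\n}=\max\{1,|n_1|\}$, and handle the truncated mean via Tonelli and the exceedances via Borel--Cantelli. Both arguments are sound and of comparable length; your joint tail estimate fuses the paper's two marginal integrability computations and the separate log-removal step into one calculation, which is arguably more self-contained, at the price of a slightly more delicate bookkeeping of the logarithmic density of $XY$ near $0$. One small remark: both methods in fact yield the lemma with any exponent strictly greater than $3$ in place of $4$ (in the paper by taking $\d$ small; in your calculation, the majorant $\sum_{n_1}(\log n_1)^{2-p}|n_1|^{-1}$ converges for every $p>3$), so the closing assertion that the exponent ``must be at least $4$'' slightly overstates the threshold, though this does not affect the validity of your proof of the lemma as stated.
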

\begin{proof}
We denote the sum above by $S$, $S$ can be divided into 2 parts: $S=S_1+S_2$, where
$$
S_1=\sum_{\mbox{\scriptsize $\ba{c} n_1\neq 0\\ n_2: |n_1\tau-n_2| =\|n_1\tau\| \ea$}} \l(\|\n\bma\|\cdot (\max\{1,|\log |n_1|,\log |n_2|\})^{4} \cdot \max\{1,|n_1|\}\cdot |n_1\tau-n_2|\r)^{-1},
$$
and 
$$
S_2=\sum_{\n: |n_1\tau-n_2|\ge 1/2} \l(\|\n\bma\|\cdot (\max\{1,|\log |n_1|,\log |n_2|\})^{4} \cdot \max\{1,|n_1|\}\cdot |n_1\tau-n_2|\r)^{-1}.
$$
Let $\d>0$, and consider the integral with $\n\in\Z^2\backslash\{0\}$.
$$
J_1(n)=\int_0^1\int_0^1\int_0^1 \l(\|\n\bma\|\l|\log \|\n\bma\|\r|^{1+\d} \|n_1\tau\| |\log \|n_1\tau\||^{1+\d}\r)^{-1} d\a_1d\a_2 d\tau.
$$
The integral has an finite value independent of $\n$. 
Hence the sum 
$$
\sum_{\mbox{\scriptsize $\ba{c} n_1\neq 0\\ n_2: |n_1\tau-n_2| =\|n_1\tau\| \ea$}} \fc{J_1(\n)}{\max\{1,|n_1|\cdot (\log |n_1|)^{1+\d}\}}
$$
is convergent, and so 
\beq\label{convergence for S1}
\sum_{\mbox{\scriptsize $\ba{c} n_1\neq 0\\ n_2: |n_1\tau-n_2| =\|n_1\tau\| \ea$}} \l(\|\n\bma\|\l|\log \|\n\bma\|\r|^{1+\d} \|n_1\tau\| |\log \|n_1\tau\||^{1+\d}\max\{1,|n_1|\cdot (\log |n_1|)^{1+\d}\r)^{-1}
\eeq
is convergent for almost every $\bma\in \R^2$, and almost every $\tau\in[0,1]$. 
Since $\sum_{n=1}^{\infty} n^{-2}=O(1)$, the inequality 
$$
\|\n \bma \|<\l(\prod_{j=1}^{2} \max\{1,|n_j|\}\r)^{-2}
$$
has only a finite number of integral solutions $\n\in\Z^2$ for almost every $\bma\in\R^2$.
Hence 
\beq\label{Inequality for log (n*alpha)< log n}
|\log \|\n\bma\||< 2\sum_{j=1}^2 \max\{0,\log |n_j|\}\ll \max\{0,\log |n_1|\},
\eeq
the last part of the inequality comes from $|n_1\tau-n_2|\le 1/2$. 
By the same reasoning we have:
\beq\label{Inequality for log (n_1*tau)< log n_1}
|\log \|n_1\tau\||< 2 \max\{0,\log |n_1|\}
\eeq
By \eqref{convergence for S1}, \eqref{Inequality for log (n*alpha)< log n}, and \eqref{Inequality for log (n_1*tau)< log n_1}, we have the series:
$$
\bal
&S_1 \ll \\ 
& \sum_{\mbox{\scriptsize $\ba{c} n_1\neq 0\\ n_2: |n_1\tau-n_2| =\|n_1\tau\| \ea$}} \l(\|\n\bma\|\l|\log \|\n\bma\|\r|^{1+\d} \|n_1\tau\| |\log \|n_1\tau\||^{1+\d}\max\{1,|n_1|\cdot (\log |n_1|)^{1+\d}\r)^{-1},\\
\eal$$
which is convergent by taking $\d=1/3$.

For $S_2$, we employ the same method, the integral
$$
J_2(n)=\int_0^1\int_0^1 \l(\|\n\bma\|\l|\log \|\n\bma\|\r|^{1+\d}\r)^{-1} d\a_1d\a_2 .
$$
The integral has an finite value independent of $\n$. 
Hence the sum 
$$
\sum_{\mbox{\scriptsize $\ba{c} \n: |n_1\tau-n_2| \ge 1/2 \ea$}} \fc{J_2(\n)}{n_1\max\{1,(\log |n_1|)^{1+\d}\}|n_1\tau-n_2|\max\{1,(\log |n_1\tau-n_2|)^{1+\d}\}}
$$
is convergent, and so 
$$
\bal
\sum_{\n:|n_1\tau-n_2|\ge 1/2} &\fc{1}{\|\n\bma\|\l|\log \|\n\bma\|\r|^{1+\d}}\cdot \fc{1}{n_1\max\{1,(\log |n_1|)^{1+\d}\}}\\
&\cdot\fc{1}{|n_1\tau-n_2|\max\{1,(\log |n_1\tau-n_2|)^{1+\d}\}}\\
\eal
$$
is convergent for almost every $\bma$ and every $\tau$ irrational. 
By \eqref{Inequality for log (n*alpha)< log n} and $|n_1\tau-n_2|\le |n_1|+|n_2|\le 2\max\{|n_1|,|n_2|\}$, the series $S_2$ is convergent by taking $\d= 1/3$. 
\end{proof}
\section{Estimating the "tail" of the discrepancy function}
Recall that $\Dbar$ is the sum of $\Dbarone$ and $\Dbartwo$(see \eqref{Dbar}), where $\Dbarone$ is already dealt with in Beck\cite{Beck}, see Lemma 4.1 below, this section is devoted to estimating the ``tail" of the second part of sum $\Dbartwo$.
\begin{lem}
Let $\varphi(n)$ be an arbitrary positive increasing function of $n$ with $\sum_{n=1}^{\infty}\fc{1}{\varphi(n)}<\infty$, then for almost every $\bma\in \T^2$,
\beq\label{estimation for Dbarone}
\max_{\bf{x},\bm{a} \in \T^2}|\Dbarone| \ll (\log N)^2\varphi(\log\log N).
\eeq
\end{lem}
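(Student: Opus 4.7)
The plan is to recognize that $\Dbarone$ has exactly the Fourier-analytic structure Beck analyzes in \cite{Beck} for the Féjer-averaged discrepancy of translations relative to straight boxes in $\T^2$: the geometric pre-factor $(1-e^{2\pi\i n_2 x_2})/(n_1 n_2)$ is bounded in absolute value by $C/(\max\{1,|n_1|\}\max\{1,|n_2|\})$ uniformly in $\bf{x}$, which matches the decay of Beck's box pre-factor up to absolute constants, while the remaining factors (the Féjer kernels in $n_1,n_2,n_3$ and the exponential-sum factor $(e^{-2\pi\i(\n\bma-n_3)N}-1)/(\n\bma-n_3)$) coincide with those Beck works with. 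Consequently, the bound \eqref{estimation for Dbarone} should follow by reproducing Beck's argument, invoking the $\tau$-independent versions of Lemmas \ref{lemma for sum of small divisors} and \ref{Lemma of second moment for small terms} (i.e.\ Beck's Lemmas 4.1 and 4.2 in \cite{Beck}) in place of ours.

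First I would dispose of the very-high-frequency tail $\max_j|n_j|>N^2(\log N)^2$: here the Féjer decay $(\sin(2\pi n_j/N^2)/(2\pi n_j/N^2))^2=O((N^2/n_j)^2)$ is enough to yield a uniformly bounded residual, independent of $\bf{x}, \bm{a}$. For the body frequencies, I would first perform the sum over $n_3$ for each fixed $(n_1,n_2)$, and use the Féjer decay of $(\sin 2\pi(\n\bma-n_3)/(2\pi(\n\bma-n_3)))^2$ to reduce, with an $O(1)$ error, to the single choice of $n_3$ closest to $n_1\alpha_1+n_2\alpha_2$. What remains is a sum over $(n_1,n_2)\in\Z^2$ of the typical shape $(\max\{1,|n_1|\}\max\{1,|n_2|\})^{-1}\cdot\min\{N,1/\|\n\bma\|\}$, where $\|\n\bma\|$ is the distance of $n_1\alpha_1+n_2\alpha_2$ to the nearest integer.

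Next I would split according to whether $\|\n\bma\|$ is above or below the threshold $(\log N)^{40}/(\max\{1,|n_1|\}\max\{1,|n_2|\})$. For the ``large terms'' above the threshold, the pointwise bound $\min\{N,1/\|\n\bma\|\}\le 1/\|\n\bma\|$ together with the $\tau$-independent analog of Lemma \ref{lemma for sum of small divisors} immediately gives a contribution bounded by $(\log N)^2\varphi(\log\log N)$. For the ``small terms'' below the threshold, a dyadic decomposition in the parameter $C=\max\{1,|n_1|\}\max\{1,|n_2|\}\|\n\bma\|^{-1}$ combined with the second-moment estimate (the $\tau$-independent analog of Lemma \ref{Lemma of second moment for small terms}) replaces the actual count of resonances in each dyadic lattice box by its expected value with admissible error, and the expected-value part then reduces back to a large-term sum.

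The main delicate point — where essentially the whole argument lives — is this second-moment analysis: one must control the $O(C^{3/4+\e}(\log N)^{(1/2+\e)k})$ error summed over dyadic $C=q^4$ and over all lattice boxes with $-N\le\V<\W\le N$, and then transfer the resulting full-measure statement to almost every $\bma$ via Borel--Cantelli. Since $\Dbarone$ is structurally identical to Beck's box Fourier series in $\T^2$, this entire analysis goes through verbatim from \cite{Beck}, and the estimate \eqref{estimation for Dbarone} follows with no new idea required. It is precisely because $\Dbarone$ avoids the hypotenuse divisor $|n_1\tau - n_2|$ that only a single power of $\varphi(\log\log N)$ appears here, whereas $\Dbartwo$ (treated in the next section) will force the second power.
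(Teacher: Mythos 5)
Your overall strategy coincides with the paper's: both proofs reduce Lemma 4.1 to Beck's result for boxes by observing that the geometric prefactor of $\Dbarone$, namely $(1-e^{2\pi\i n_2 x_2})/(n_1 n_2)$, has the same $1/(\max\{1,|n_1|\}\max\{1,|n_2|\})$ decay as Beck's box prefactor and all the remaining kernel factors are identical. The paper's actual proof is nothing more than a citation: it points to (3.12), (6.1) ($\Sigma_9$) and (8.14) ($\Sigma_{10}$) of Beck and then handles the new starting-point phase $\prod_{j}e^{-2\pi\i n_j a_j}$ by reinterpreting the sum as the discrepancy relative to the translated box $\prod_i[a_i,a_i+x_i]$, which is a finite signed sum of boxes of Beck's form $\prod_i[0,x_i]$. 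You should note this phase factor explicitly: it is harmless (modulus one, still linear in $\bf{n}$, so it only shifts the linear form in the exponential-sum step), but it is the one genuinely new ingredient relative to Beck's setting and the paper does say a word about it.

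Where your intermediate account of Beck's argument goes astray is in the identification of which mechanism handles which frequency regime, and in what the ``main delicate point'' is. Lemma~\ref{lemma for sum of small divisors} controls the sum over the set $U$ where $\max\{1,|n_j|\}|n_1\tau-n_2|\cdot\|\n\bma\|\le(\log N)^{40}$, i.e.\ the \emph{small}-divisor / \emph{large}-term region, via a direct metrical bound; you instead propose applying it (in $\tau$-independent form) to the region where $\|\n\bma\|$ lies \emph{above} the threshold, which is precisely the region that lemma excludes. Conversely, the region with $\max\{1,|n_1|\}\max\{1,|n_2|\}\cdot\|\n\bma\|>(\log N)^{40}$ is not disposed of by the second-moment lemma alone: Lemma~\ref{Lemma of second moment for small terms} is only an input to the genuine core of Beck's proof, the exponential-sum cancellation over arithmetic progressions encoded in the notions of $\bm{\e}$-big vectors, neighbors, and special lines (Section~5 of the present paper, mirroring Beck). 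Your sketch omits this cancellation step entirely and instead asserts that ``the expected-value part then reduces back to a large-term sum,'' which is not how the argument closes and would not by itself yield a bound better than a trivial polynomial in $N$. So: the conclusion ``cite Beck'' is correct and is exactly what the paper does, but the roadmap you give of Beck's proof misplaces the roles of Lemmas~\ref{lemma for sum of small divisors} and~\ref{Lemma of second moment for small terms} and leaves out the exponential-sum cancellation, which is where the real work lives.
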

\begin{proof}
Note that $\Dbarone$ in \eqref{Dbar} is in fact part of the sum (3.12) in Beck\cite{Beck}, whose estimation is given by (6.1) for $\sum_9$ and (8.14) for $\sum_{10}$ in Beck\cite{Beck}. Our sum $\Dbarone$ has an additional factor of $\prod_{j=1}^{2}e^{-2\pi \i n_j a_j}$, which can be estimated in the same fashion, or it could be understood as part of the discrepancy function for the translated boxes $\prod_{i=1}^{2} [a_i,a_i+x_i]$, which is a finite sum or difference of the orginal boxes of the form $\prod_{i=1}^{2} [0,x_i]$, therefore satisfies the same estimation as the original boxes.
\end{proof}
Note that $\Dbartwo$ is the sum of the products (where $\bf{n}\in \Z^3 \backslash {\bf{0}}$)
\beq{\label{term}}
\begin{aligned}\ftwo=
&\fc{\i^3}{(2\pi)^3} \sum_{\bf{n}\in \Z^3\backslash\{0\}} 
\(\fc{e^{2\pi\i n_1x_1}-e^{2\pi\i n_2 x_2}}{n_1(n_1\fc{x_1}{x_2}-n_2)}\fc{e^{-2\pi \i (n_1\a_1+n_2\a_2-n_3)N}-1}{n_1\a_1+n_2\a_2-n_3}\)\\
&\cdot \prod_{j=1}^2 \(e^{-2\pi \i n_ja_j} \(\fc{\sin(2\pi \fc{n_j}{N^2})}{2\pi\fc{n_j}{N^2}}\)^2\)
\cdot \(\fc{\sin 2\pi(n_1\a_1+n_2\a_2-n_3)}{2\pi (n_1\a_1+n_2\a_2-n_3)}\)^2\\
\end{aligned}
\eeq
let:
$$
\bar{D}_5(\bf{x},\bm{a},\bm{\a};N)=\sum_{\bf{n}\in U_5(\bm{\a},\tau)}    \ftwo,
$$
and 
\beq
U_5(\n,\tau;N)=\l\{\n \in \Z^3\backslash \{\bf{0}\} \l| 
\bal &\max\{n_1,|n_1\tau-n_2|\} \le N^2/4, \\
&\min\{n_1,|n_1\tau-n_2|\}\ge(\log N)^{40}, \\
& |\n\bma-n_3|=\|\n\bma \|,\ |n_1\tau-n_2|\ge 1/2,\\
& |n_1|n_1\tau-n_2|\|\bf{n}\bma\|>(\log N)^{40}. \\
\eal\r.
\r\}
\eeq

The main result of this section is the following:

\begin{Prop}\label{D7}
Let $\varphi(n)$ be an arbitrary positive increasing function of $n$ with $\sum_{n=1}^{\infty}\fc{1}{\varphi(n)}< \infty$, then for almost every $\bma\in[0,1]^2$, and almost every $\tau \in [0,1]$, we have 
$$|\bar{D}_5(\bf{x},\bm{a},\bm{\a};N)-\Dbartwo|\ll (\log N)^2\varphi^2(\log \log N)$$
\end{Prop}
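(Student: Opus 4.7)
The plan is to estimate
\beq
\Dbartwo - \Ds \;=\; \sum_{\n\in\Z^3\setminus U_5}\ftwo
\eeq
by partitioning $\Z^3\setminus U_5$ into five disjoint pieces, each handled by a matching tool from Section 3. First I would record a clean pointwise bound on $|\ftwo|$. Using $|e^{2\pi \i n_1 x_1}-e^{2\pi \i n_2 x_2}|\le 2$, the inequality $\bigl|(e^{-2\pi\i(\n\bma-n_3)N}-1)/(\n\bma-n_3)\bigr|\le \min\{2\pi N,\, 2/|\n\bma-n_3|\}$, and the Fejér bound $\bigl(\sin 2\pi\theta/(2\pi\theta)\bigr)^2\le \min\{1,(2\pi\theta)^{-2}\}$, one obtains
\beq
|\ftwo|\ll \frac{\min\{N,\,1/|\n\bma-n_3|\}\cdot \min\{1,(\n\bma-n_3)^{-2}\}}{\max\{1,|n_1|\}\cdot|n_1\tau-n_2|}\cdot \prod_{j=1}^{2}\min\{1,(N^2/n_j)^2\}.
\eeq

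Next I would partition $\Z^3\setminus U_5$, in the following order, into a \emph{high-frequency piece} $\max\{|n_1|,|n_1\tau-n_2|\}>N^2/4$; a \emph{hypotenuse-singular piece} where the first condition holds but $|n_1\tau-n_2|<1/2$; a \emph{small-coordinate piece} where the first two hold but $\min\{|n_1|,|n_1\tau-n_2|\}<(\log N)^{40}$; a \emph{wrong-$n_3$ piece} where the first three hold but $n_3$ is not the nearest integer to $\n\bma$; and finally a \emph{small-divisor piece} where the first four hold but $|n_1|\cdot|n_1\tau-n_2|\cdot\|\n\bma\|\le(\log N)^{40}$. The high-frequency piece is controlled by the Fejér decay $(N^2/n_j)^2$, yielding $O(1)$ after using $\min\{N,\cdot\}\le N$. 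In the small-coordinate piece the small coordinate contributes at most a polylogarithmic factor in $\log N$, and the remaining lattice sum is $O((\log N)^2)$. In the wrong-$n_3$ piece, $|\n\bma-n_3|\ge 1/2$, so the two factors involving $\n\bma-n_3$ combine to give $O(1/|\n\bma-n_3|^3)$ decay, summable to $O(1)$ over $n_3\in\Z$ and leaving $O((\log N)^2)$ from $(n_1,n_2)$. The small-divisor piece is precisely the set $U(\bma,\tau;N)$ of Lemma \ref{lemma for sum of small divisors}, so that lemma directly yields $\ll (\log N)^2\varphi(\log\log N)$.

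The main obstacle is the hypotenuse-singular piece $|n_1\tau-n_2|=\|n_1\tau\|<1/2$: here $\|n_1\tau\|$ is itself a genuinely new small divisor arising from the hypotenuse that has no analogue in Beck's rectangular setting, and the factor $1/|n_1\tau-n_2|$ can be arbitrarily large, so absolute convergence is not automatic. Lemma \ref{Lemma for large n_1 and n_1* tau-n_2} is precisely tailored to this situation: summing the pointwise bound over this piece with $n_3$ the nearest integer reduces to the series $S_1$ therein, convergent for almost every $(\bma,\tau)$, with the logarithmic weights $|\log\|n_1\tau\||^{1+\delta}$ and $|\log\|\n\bma\||^{1+\delta}$ whose removal is paid for by an extra factor of $\varphi(\log\log N)$ via a standard dyadic decomposition in the $|n_j|$ ranges. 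Combined with the $\varphi(\log\log N)$ already appearing in the small-divisor piece, one obtains the claimed $(\log N)^2\varphi^2(\log\log N)$ bound, and this is also why the square of $\varphi$ (rather than a single $\varphi$) appears in the main theorem's upper bound.
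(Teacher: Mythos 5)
Your decomposition captures the right landmarks (the hypotenuse-singular piece as the genuinely new difficulty, Lemma \ref{Lemma for large n_1 and n_1* tau-n_2} for metric control, Lemma \ref{lemma for sum of small divisors} for the small-divisor piece, cubic decay in $n_3$ for the wrong-$n_3$ piece), but there are two concrete gaps.

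First, the high-frequency piece. You take the threshold at $\max\{|n_1|,|n_1\tau-n_2|\}>N^2/4$ and claim $O(1)$ from Fej\'er decay after bounding the exponential by $N$. This fails: for $|n_j|\sim N^2$ the Fej\'er factor $\min\{1,(N^2/n_j)^2\}$ is $\Theta(1)$, and the resulting lattice sum over $|n_j|\sim N^2$ with a $1/\|\n\bma\|$ factor is not $O(1)$; nor is it $O(1)$ for $|n_j|$ up to $N^2(\log N)^2$. The paper splits this into two pieces: for $\max\{|n_1|,|n_2|\}>N^2(\log N)^2$ (Proposition \ref{D_1 - D}), the Fej\'er factors supply a $(\log N)^{-4}$ weight and the whole series is summable only after invoking the metric Lemma \ref{Lemma for large n_1 and n_1* tau-n_2}; for the transitional range $N^2/4<\max\{\cdot\}\le N^2(\log N)^2$ (together with the small-coordinate range $\min\{\cdot\}<(\log N)^{40}$), the paper uses the second-moment counting Lemma \ref{Lemma of second moment for small terms} in Proposition \ref{sum7} to get $(\log N)^2\log\log N$, which is by no means automatic. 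Your proposal has neither the $(\log N)^2$ threshold nor the counting step, so this piece is unproven.

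Second, the hypotenuse-singular piece and the origin of $\varphi^2$. Your pointwise bound on $|\ftwo|$ crudely uses $|e^{2\pi\i n_1x_1}-e^{2\pi\i n_2x_2}|\le 2$, which leaves the factor $1/|n_1\tau-n_2|=1/\|n_1\tau\|$ alive. Removing the weights of Lemma \ref{Lemma for large n_1 and n_1* tau-n_2}'s $S_1$ over the range $|n_1|\le N^2(\log N)^2$ then costs $(\log N)^{3(1+\delta)}\approx(\log N)^{4}$, not $(\log N)^2\varphi^2(\log\log N)$. The paper's Proposition 4.4 avoids this by first using the cancellation
\[
\left|\frac{e^{2\pi\i n_1x_1}-e^{2\pi\i n_2x_2}}{n_1\tau-n_2}\right|=x_2\left|\frac{e^{2\pi\i n_1x_1}-e^{2\pi\i n_2x_2}}{n_1x_1-n_2x_2}\right|\ll 1,
\]
which eliminates the $\|n_1\tau\|$ divisor entirely (this is exactly why the numerator is $e^{2\pi\i n_1x_1}-e^{2\pi\i n_2x_2}$ and not a generic bounded quantity), and then applies a $\varphi$-weighted analogue of Lemma \ref{Lemma for large n_1 and n_1* tau-n_2} to $\sum_{n_1}\frac{1}{|n_1|\,\|n_1\a_1+n_2^*\a_2\|}$. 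Removing the two weights $\log|n_1|\,\varphi(\log\log|n_1|)$ and $|\log\|\n\bma\||\,\varphi(\log\log\|\n\bma\|)$ yields $(\log N)^2\varphi^2(\log\log N)$ from this piece alone. Consequently your last sentence misattributes the square: $\varphi^2$ does not arise from combining one $\varphi$ from the hypotenuse piece with one from the small-divisor piece; the hypotenuse piece already produces $\varphi^2$ by itself, and all other pieces are $\ll(\log N)^2\varphi(\log\log N)$ or better.
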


To prove Proposition $\ref{D7}$, we need to control different components of $\bar{D}_5-\bar{D}_{\triangle_{\bf{x}},2}$ step by step.

\subsection{Estimation for the sum when $|n_1|$ or $|n_2|$ is big}

Define
\beq
\bar{D}_1(\bf{x},\bm{a},\bm{\a};N)=\sum_{\bf{n}\in U_1(\bm{\a};N)}   \ftwo,
\eeq
where 
$$
U_1(\bm{\a};N)=\l\{\bf{n}\in \Z^3\backslash \{\bf{0}\} \ |\ 
 \max\{|n_1|,|n_2|\}\le N^2(\log N)^2
\r\}
.$$
We define another set that is parallel to $U_{\bma}$ and abuse the notation $\n$ for $(n_1,n_2)$ if $n_3$ is the closest integer to $n_1\a_1+n_2\a_2$:
$$
U'_1(\bm{\a};N)=\l\{\bf{n}\in \Z^2\backslash \{\bf{0}\} \ |\ 
 \max\{|n_1|,|n_2|\}\le N^2(\log N)^2
\r\}
.$$
We show the following:
\begin{Prop}\label{D_1 - D}
For almost every $\bma\in \R^2$, and almost every $\tau\in [0,1]$, we have
$$\l|\bar{D}_1(\bf{x},\bm{a},\bm{\a};N)-\Dbartwo \r|=O(1),$$
where $O(1)$ represents an absolute bound which depends on $\bma, \ \tau$, but does not depend on $\bm{a}$ or $N$.
\end{Prop}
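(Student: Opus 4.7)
The quantity $\Dbartwo-\bar{D}_1(\bf{x},\bm{a},\bm{\a};N)$ is precisely the sum of $\ftwo$ over those $\n\in\Z^3\setminus\{0\}$ with $\max\{|n_1|,|n_2|\}>N^2(\log N)^2$; note that $n_3$ is unconstrained in both $U_1$ and $\Z^3\setminus\{0\}$. The plan is to bound this tail using three ingredients: (i) performing the $n_3$-summation first, exploiting the sinc-squared decay from the F\'ejer kernel; (ii) using the F\'ejer factors $\bigl(\sin(2\pi n_j/N^2)/(2\pi n_j/N^2)\bigr)^2\le\min\{1,N^4/n_j^2\}$, which are at most $(\log N)^{-4}$ on the tail region; and (iii) invoking Lemma \ref{Lemma for large n_1 and n_1* tau-n_2} to control the small-divisor contribution from $\|\n\bma\|$ combined with $\|n_1\tau\|$.

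For step (i), I fix $(n_1,n_2)$ and note that for $n_3=n_3^{\ast}$ equal to the closest integer to $\n\bma$, the factor $|e^{-2\pi\i(\n\bma-n_3)N}-1|/|\n\bma-n_3|$ is at most $\min\{2\pi N,\,2/\|\n\bma\|\}$, while for all other $n_3$, the sinc-squared factor yields decay $\ll1/|\n\bma-n_3|^3$, summing to $O(1)$. Consequently
$$
\sum_{n_3\in\Z}|\ftwo|\ll\frac{1}{\max\{1,|n_1|\}\cdot|n_1\tau-n_2|}\cdot\bigl(\min\{N,1/\|\n\bma\|\}+O(1)\bigr)\cdot\prod_{j=1}^{2}\min\{1,N^4/n_j^2\},
$$
and it suffices to show that the remaining double sum over $(n_1,n_2)$ with $\max\{|n_1|,|n_2|\}>N^2(\log N)^2$ is uniformly $O(1)$.

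I split this double sum into the two regions $|n_1|>N^2(\log N)^2$ and $|n_2|>N^2(\log N)^2$; consider the first (the second is analogous after exchanging the roles of $n_1$ and $n_1\tau-n_2$). A dyadic decomposition in $|n_1|$ together with the bound $(N^4/n_1^2)\cdot|n_1|^{-1}=N^4/|n_1|^3$ produces a convergent geometric series, reducing the matter to controlling for each fixed $n_1$ the inner sum
$$
\Sigma_1(n_1,N):=\sum_{n_2\in\Z}\frac{1}{|n_1\tau-n_2|\,\|\n\bma\|}\cdot\min\{1,N^4/n_2^2\}.
$$
I further split $\Sigma_1$ into the generic contribution $|n_1\tau-n_2|\ge 1/2$, where the denominator is harmless and Lemma \ref{Lemma for large n_1 and n_1* tau-n_2} together with the $\min\{1,N^4/n_2^2\}$ cutoff on $|n_2|$ yields a summable bound, and the single small-divisor term at $n_2=n_2^{\ast}(n_1)$, for which $|n_1\tau-n_2^{\ast}|=\|n_1\tau\|$ can be arbitrarily small.

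The small-divisor term is the heart of the argument: here both $\|n_1\tau\|$ and $\|\n\bma\|$ may be simultaneously small, and only the almost-sure convergence of the series in Lemma \ref{Lemma for large n_1 and n_1* tau-n_2} keeps things under control. Because the tail of a convergent series tends to $0$, and because the $(\log N)^{-4}$ factor gained from the F\'ejer cutoff on the tail region absorbs the $(\log\max(|n_1|,|n_2|))^4$ weight appearing in that lemma, the overall contribution of the small-divisor terms, summed over $|n_1|>N^2(\log N)^2$, is $O(1)$ uniformly in $N$; the symmetric region $|n_2|>N^2(\log N)^2$ is handled identically. The principal obstacle is precisely this joint small-divisor configuration, where the Diophantine properties of both $\bma$ and $\tau$ must cooperate, which is exactly what Lemma \ref{Lemma for large n_1 and n_1* tau-n_2} is designed to provide.
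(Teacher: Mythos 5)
Your overall strategy---use the F\'ejer factors to gain decay on the tail and then invoke Lemma~\ref{Lemma for large n_1 and n_1* tau-n_2}---is the same as the paper's, and the separation of the $n_3=n_3^*$ term from the $O(1/|\n\bma-n_3|^3)$ tail in the $n_3$-sum is also what the paper does. But there are two real problems in the execution.

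First, the justification of the key comparison is wrong. You claim that ``the $(\log N)^{-4}$ factor gained from the F\'ejer cutoff on the tail region absorbs the $(\log\max(|n_1|,|n_2|))^4$ weight appearing in that lemma.'' On the tail you have $\max(|n_1|,|n_2|)>N^2(\log N)^2>N^2$, hence $\log\max(|n_1|,|n_2|)>2\log N$, so $(\log\max(|n_1|,|n_2|))^4>16(\log N)^4$; the weight is \emph{larger} than the gain $(\log N)^{-4}$ compensates for, and the absorption goes the wrong way. What actually makes the comparison work is that the F\'ejer factor $\prod_j\min\{1,N^4/n_j^2\}$ decays \emph{polynomially} in the large coordinate, which dominates any power of $\log$ of that coordinate: one checks directly that on the tail $\prod_j\min\{1,N^4/n_j^2\}\ll(\max\{1,\log|n_1|,\log|n_2|\})^{-4}$, with a constant independent of $N$. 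That pointwise inequality, not a $(\log N)^{-4}$-versus-$(\log N)^4$ cancellation, is what lets you bound the tail by the convergent series of Lemma~\ref{Lemma for large n_1 and n_1* tau-n_2}. This is exactly how the paper passes from $\bar D_{1,1}$ to the series of Lemma~\ref{Lemma for large n_1 and n_1* tau-n_2}.

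Second, the intermediate ``dyadic decomposition in $|n_1|$, reducing to $\Sigma_1(n_1,N)$ for each fixed $n_1$'' is not a valid reduction. The inner sum $\Sigma_1(n_1,N)$ contains the term $n_2=n_2^*(n_1)$ where both $\|n_1\tau\|$ and $\|n_1\a_1+n_2^*\a_2\|$ can be small, and there is no uniform bound $\Sigma_1(n_1,N)=O(1)$ over $n_1$; for $n_1$ a convergent denominator of $\tau$, $\|n_1\tau\|^{-1}\sim n_1$. You implicitly acknowledge this in your final paragraph when you abandon the ``for each fixed $n_1$'' framing and invoke the almost-sure convergence of the full double series in Lemma~\ref{Lemma for large n_1 and n_1* tau-n_2}. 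The geometric series from the dyadic blocks therefore does no work; either you state the pointwise comparison above and cite Lemma~\ref{Lemma for large n_1 and n_1* tau-n_2} for the whole sum (as the paper does), or you must prove something you cannot, namely uniform control of $\Sigma_1(n_1,N)$. As it stands the two halves of the argument are inconsistent with each other.
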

\begin{proof}
First we decompose the difference into 2 parts:
\beq{\label{D1=D11+D22}}
\l|\bar{D}_1(\bf{x},\bm{a},\bm{\a};N)-\Dbartwo\r|\ll\bar{D}_{1,1} +\bar{D}_{1,2},
\eeq
where 
$$
\bar{D}_{1,1}=\sum_{\mbox{\scriptsize $\ba{c} \n\notin U'_1(\bm{\a})  \\
 \ea$}}
  \l( \|\n\bma\| |n_1|  |n_1\tau-n_2| \prod_{j=1}(\max\{1,\fc{n_j}{N^2}\})^2 \r)^{-1}
 $$
$$
\bar{D}_{1,2}=\sum_{r=1}^{\infty}\sum_{ \mbox{\scriptsize $\ba{c} \n\notin U'_1(\bm{\a})
 \ea$}}
  \l( |n_1|  |n_1\tau-n_2| \prod_{j=1}(\max\{1,\fc{n_j}{N^2}\})^2 \r)^{-1}\cdot \fc{1}{r^3},
$$
where $r=\lceil |n_1\a_1+n_2\a_2-n_3|\rceil$.

To estimate $\bar{D}_{1,1}$, we employ Lemma \ref{Lemma for large n_1 and n_1* tau-n_2}. For every $\bma\in\R^2$ and every $\tau\in [0,1]$ that satisfy Lemma \ref{Lemma for large n_1 and n_1* tau-n_2},
$$
\bal
\bar{D}_{1,1} 
&\ll\sum_{\n\notin U'_1(\bm{\a})}  \l(\|\n\bma\|\cdot (\max\{1,|\log |n_1|,\log |n_2|\})^{4} \cdot \max\{1,|n_1|\}\cdot |n_1\tau-n_2|\r)^{-1}\\
&
\le \sum_{\n\in\Z^2\backslash \{\bf{0}\}} \l(\|\n\bma\|\cdot (\max\{1,|\log |n_1|,\log |n_2|\})^{4} \cdot \max\{1,|n_1|\}\cdot |n_1\tau-n_2|\r)^{-1}\\
&=O(1).
\eal
$$
$\bar{D}_{1,2}$ can be easily controlled by $\bar{D}_{1,1}$:
$$
\bal
\bar{D}_{1,2} 
&\ll \l(\sum_{r=1}^{\infty} \fc{1}{r^3}\r)\l(\sum_{ \mbox{\scriptsize $\ba{c} \n\notin U'_1(\bm{\a})
 \ea$}}  \l( |n_1|  |n_1\tau-n_2| \prod_{j=1}(\max\{1,\fc{n_j}{N^2}\})^2 \r)^{-1}\r) \\
& \ll \bar{D}_{1,1} \\
& =O(1).
\eal
$$
\end{proof}

From the proof it is easy to see that the series $\Dbartwo$ is absolutely convergent for almost every $\a\in \R^2$ and $\tau\in[0,1]$.

\subsection{Estimation for the sum when $|\n \bma-n_3|$ is bigger than $\fc{1}{3}$.}
Let 
$$
\bar{D}_2(\bf{x},\bm{a},\bm{\a};N)=\sum_{\bf{n}\in U_2(\bma;N)}  \ftwo
$$
where $\bf{n}=(n_1,n_2,n_3)$ satisfies
\beq{\label{bigger than one third}}
U_2(\bma;N)=
\l\{\n\in\Z^3 \ | \max\{|n_1|,|n_2|\}\le N^2(\log N)^2,\quad
|\n \bma -n_3|< \fc{1}{3}
\r\}
\eeq
We show that the difference between $\bar{D}_1$ and $\bar{D}_2$ can be well controlled, hence we only need to focus on the sum over $(n_1, n_2)$ instead of $(n_1,n_2,n_3)$:
\begin{Prop}
Let $\varphi(n)$ be an arbitrary positive increasing function of $n$ with $\sum_{n=1}^{\infty}\fc{1}{\varphi(n)}<\infty$, then for almost every $\bma \in \R^2$ and almost every $\tau\in [0,1]$, we have
$$
\l|\bar{D}_2(\bf{x},\bm{a},\bm{\a};N)-\bar{D}_1(\bf{x},\bm{a},\bm{\a};N)\r| \ll (\log N)^2\varphi(\log\log  N),
$$
\end{Prop}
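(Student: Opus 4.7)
The plan is to unfold $\bar{D}_1-\bar{D}_2$ as a Fourier sum over the set $U_1\setminus U_2$, which carries the extra constraint $|n_1\a_1+n_2\a_2-n_3|\ge 1/3$, resolve the $n_3$-summation using the extra decay of the squared Féjer factor, and then split the remaining two-dimensional sum according to the size of the new small divisor $|n_1\tau-n_2|$.

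First I would write
$$
\bar{D}_1-\bar{D}_2 \;=\; \sum_{\substack{(n_1,n_2):\,\max\{|n_1|,|n_2|\}\le N^2(\log N)^2\\ n_3\in\Z:\,|n_1\a_1+n_2\a_2-n_3|\ge 1/3}} \ftwo.
$$
For each fixed $(n_1,n_2)$, the $n_3$-dependent factor of $|\ftwo|$ is bounded by $|n_1\a_1+n_2\a_2-n_3|^{-1}$ times the squared Féjer factor $\le\min\{1,(n_1\a_1+n_2\a_2-n_3)^{-2}\}$, which is $O(\min\{1,r^{-3}\})$ where $r=\lceil|n_1\a_1+n_2\a_2-n_3|\rceil$. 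Since at most two $n_3$'s correspond to each value of $r$ and $\sum_{r\ge 1}r^{-3}<\infty$, the $n_3$-summation contributes only an $O(1)$ factor, reducing the problem to showing
$$
\Sigma\,:=\,\sum_{\substack{n_1\ne 0,\,n_1\tau\ne n_2\\\max\{|n_1|,|n_2|\}\le N^2(\log N)^2}}\frac{1}{|n_1|\,|n_1\tau-n_2|}\prod_{j=1}^{2}\min\{1,(N^2/|n_j|)^2\} \,\ll\, (\log N)^2\varphi(\log\log N).
$$

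Next I would split $\Sigma=\Sigma_a+\Sigma_b$ according to whether $|n_1\tau-n_2|\ge 1/2$ or $<1/2$. In $\Sigma_a$, fixing $n_1$ and writing $n_2=\lfloor n_1\tau\rceil+j$ with $|j|\ge 1$ so that $|n_1\tau-n_2|\sim|j|$, a direct dyadic decomposition gives $\sum_{1\le|j|\le 2N^2(\log N)^2}|j|^{-1}\ll\log N$; summing $|n_1|^{-1}\min\{1,(N^2/|n_1|)^2\}$ over $n_1$ yields another $\log N$, so $\Sigma_a\ll(\log N)^2$. In $\Sigma_b$ the constraint $|n_1\tau-n_2|<1/2$ uniquely determines $n_2$ to be the nearest integer to $n_1\tau$, so $|n_1\tau-n_2|=\|n_1\tau\|$ and $|n_2|\le|n_1|+1$, and dropping the (weaker) Féjer factor on $n_2$ the sum becomes
$$
\Sigma_b \;\le\; \sum_{n_1=1}^{N^2(\log N)^2}\frac{1}{n_1\,\|n_1\tau\|}.
$$

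The main obstacle is to bound $\Sigma_b$ by $(\log N)^2\varphi(\log\log N)$ for almost every $\tau$. For this I would invoke the classical one-dimensional metric Diophantine estimate
$$
\sum_{n=1}^M\frac{1}{n\,\|n\tau\|}\,\ll\,(\log M)^2\,\varphi(\log\log M)\quad(\text{a.e. }\tau),
$$
valid when $\sum 1/\varphi(n)<\infty$. This is obtained by combining the Koksma-type a.e.\ bound $\sum_{n\le x}1/\|n\tau\|\ll x\log x\cdot\varphi(\log\log x)$ (which in turn comes from the metric theory of continued fractions: for a.e.\ $\tau$ the partial quotients satisfy $a_{k+1}(\tau)\ll\varphi(k)$ eventually, and the three-distance theorem controls the contribution of each block $[q_k,q_{k+1})$) with an Abel summation against $1/n$. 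Applying the resulting estimate with $M=N^2(\log N)^2$ and using $\log M\sim 2\log N$, $\log\log M\sim\log\log N$, gives $\Sigma_b\ll(\log N)^2\varphi(\log\log N)$. Combining the bounds for $\Sigma_a$ and $\Sigma_b$ with the $O(1)$ factor from the $n_3$-summation completes the proof.
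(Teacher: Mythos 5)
Your decomposition matches the paper's: you resolve the $n_3$-summation into an $O(1)$ factor using the squared F\'ejer decay (the paper's $\sum_r r^{-3}$), split the remaining $(n_1,n_2)$-sum by whether $|n_1\tau-n_2|\ge 1/2$ (your $\Sigma_a$, the paper's $\bar D_{2,2}$) or $|n_1\tau-n_2|=\|n_1\tau\|$ (your $\Sigma_b$, the paper's $\bar D_{2,1}$), bound $\Sigma_a$ by $(\log N)^2$, and reduce $\Sigma_b$ to $\sum_{n\le M}\tfrac{1}{n\|n\tau\|}$. So far this is the same route.

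The gap is in how you justify the key Diophantine estimate for $\Sigma_b$. You pass through the intermediate claim $\sum_{n\le x}\tfrac{1}{\|n\tau\|}\ll x\log x\,\varphi(\log\log x)$, derived from the pointwise Borel--Bernstein bound $a_{k+1}(\tau)\ll\varphi(k)$. That chain does not close. For $x\in[q_j,q_{j+1})$ one has $j\asymp\log q_j$ a.e.\ (L\'evy), and already the single term $n=q_j$ gives $\sum_{n\le q_j}\tfrac{1}{\|n\tau\|}\ge\tfrac{1}{\|q_j\tau\|}\asymp q_{j+1}\asymp a_{j+1}q_j$; the bound $a_{j+1}\ll\varphi(j)\asymp\varphi(\log q_j)$ therefore only controls this by $q_j\,\varphi(\log q_j)$, which is \emph{not} $\ll q_j\log q_j\,\varphi(\log\log q_j)$ in general --- take $\varphi(n)=n^2$, where $\varphi(\log q)\gg\log q\cdot\varphi(\log\log q)$. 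Borel--Bernstein is sharp, so individual partial quotients really can be this large, and the pointwise bound simply cannot produce the $\varphi(\log\log\cdot)$ factor. What rescues the final estimate is averaging: the paper bounds $\sum_{n\le M}\tfrac{1}{n\|n\tau\|}$ directly by blocks $[q_j,q_{j+1})$, getting $\ll a_{j+1}\log q_{j+1}\ll a_{j+1}\log N$ per block, and then applies Khintchine's theorem on \emph{sums} of partial quotients, $\sum_{j\le s}a_j\ll s\,\psi(s)$ a.e.\ $\Leftrightarrow\sum 1/(n\psi(n))<\infty$, with $\psi(n)=\varphi(\log n)$ and $s\ll\log N$. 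You should replace the Koksma-type intermediate estimate and the Abel summation by this block decomposition plus Khintchine's sum theorem; otherwise you only reach $(\log N)^2\varphi(\log N)$, which is weaker than required.
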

\begin{proof}
Similar to the previous proposition, the difference above can be decomposed into 2 parts:
\beq\label{D2=D21+D22}
\bal
&\l|\bar{D}_1(\bf{x},\bm{a},\bm{\a};N)-\bar{D}_2(\bf{x},\bm{a},\bm{\a};N)\r|\\
&\ll \sum_{r=1}^{\infty}\sum_{\n\in U'_1(\bma)}
\l(|n_1|  |n_1\tau-n_2| \prod_{j=1}(\max\{1,\fc{n_j}{N^2}\})^2 \r)^{-1}\cdot \fc{1}{r^3}\\
&\ll\bar{D}_{2,1} +\bar{D}_{2,2},\\
\eal
\eeq
where 
$$
\Dtwoone=\sum_{1\le |n_1|\le N^2(\log N)^2} \fc{1}{|n_1| \| n_1\tau\|},
$$
and 
$$
\Dtwotwo=\sum_{\n\in U'_1(\bma): |n_1\tau-n_2|\ge 1/2} \fc{1}{|n_1||n_1\tau-n_2|}.
$$
$\Dtwotwo$ can be estimated easily:
$$
\Dtwotwo\ll \prod_{j=1}^{2}\l(\sum_{1\le |n_j|\le N^2(\log N)^2} \fc{1}{|n_j|}\r)
\ll (\log N)^2.
$$
For $\Dtwoone$, we apply the continued fraction algorithm as in the proof for (9.1) in Beck\cite{Beck}, see \cite{Lang} for the theory of continued fractions. Let $a_j$ be the j-th partial quotient of $\tau$ and $q_j$ be the dnominator of the j-th convergent, then by Dirichlet's ``box principle'' and the estimation for the j-th convergent: 
$$
\frac{1}{2q_{j+1}}\le \|q_j\tau\|\le \fc{1}{q_{j+1}},
$$
we have:
$$
\sum_{n_1=q_j}^{q_{j+1}} \frac{1}{n_1\|n_1\tau\|}\ll \frac{q_{j+1}}{q_j} \sum_{k=1}^{q_{j+1}-q_{j}-1} \frac{1}{k}\ll a_{j+1}\cdot \log (q_{j+1}).
$$
Let $s$ be the integer such that $q_s\le N^{2+\e}<q_{s+1}$, then for almost every $\tau$, we have 
$$
s\ll \log N.
$$
combined with $\log q_j\ll j$ for almost every $\tau$, we have 
$$
\sum_{n_1=1}^{N^2(\log N)^2} \fc{1}{n_1\|n_1\tau\|}\ll \l(\sum_{j=1}^{s} a_{j+1} \r) \log N.
$$
By a theorem of Khintchine about the the sum of partial quotient (see \cite{Koksma}, p.46), if $\psi(n)$ is any increasing function, then
$$
a_1(\tau)+\dots +a_s(\tau) \ll s\cdot \psi(s) \Longleftrightarrow \sum_{n=1}^{\infty} \fc{1}{n\cdot \psi(n)}<\infty
$$
for almost every $\tau$. Let $\psi(n)=\varphi(\log n)$, then 
$$
\sum_{n_1=1}^{N^2(\log N)^2} \fc{1}{n_1\|n_1\tau\|}\ll \l(\sum_{j=1}^{s} a_{j+1} \r) \log N\ll (\log N)^2\varphi(\log \log N).
$$
\end{proof}
\subsection{Estimation for the sum when $|n_1 \tau-n_2|=\|n_1\tau\|$.}
An important difference between the case of triangles and the case of boxes is that the divisor changes from $n_1 n_2(n_1\a_1 +n_2\a_2 -n_3)$ to $n_1(n_1\tau-n_2)(n_1\a_1+n_2\a_2-n_3)$, which adds a possibility for $|n_1\tau-n_2|$ to be small. In this section, we deal with the difference by some metrical properties of  $n_1\|n_1\a_1+n_2\a_2\|$, where $n_2$ is the closest integer to $n_1\tau$. Unfortunately, this would lead to a loss of $\varphi(\log \log N)$ in the estimation.

Let 
$$
\bar{D}_3(\bf{x},\bm{a},\bm{\a};N)=\sum_{\bf{n}\in U_3(\alpha,\tau)}  \ftwo
$$
where $\bf{n}=(n_1,n_2,n_3)$ satisfies
\beq{\label{n1*tau-n2 bigger than one third}}
U_3(\bma,\tau;N)=
\l\{\n\in\Z^3 \ \l| \ 
\bal 
&\max\{|n_1|,|n_2|\}\le N^2(\log N)^2,\\
&|\n \bma -n_3|< \fc{1}{3}, \\
&|n_1\tau-n_2|=\|n_1\tau\|\\
\eal
\r.\r\}
\eeq
We show that the difference between $\bar{D}_3$ and $\bar{D}_2$ can be controlled by the following proposition, hence we only need to focus on the sum over $(n_1, n_2)$ when $|n_1\tau-n_2|\ge 1/2$, which could be treated in the same way as Beck did.
\begin{Prop}
Let $\varphi(n)$ be an arbitrary positive increasing function of $n$ with $\sum_{n=1}^{\infty}\fc{1}{\varphi(n)}<\infty$, then for almost every $\bma \in \R^2$ and almost every $\tau\in [0,1]$, we have
$$
\l|\bar{D}_3(\bf{x},\bm{a},\bm{\a};N)-\bar{D}_2(\bf{x},\bm{a},\bm{\a};N)\r| \ll (\log N)^2\varphi^2(\log \log N),
$$
\end{Prop}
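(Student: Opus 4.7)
The plan is to adapt the argument from the preceding proposition in Section 4.2. The quantity to control is the contribution to $\bar{D}_2$ coming from the ``nearest-integer diagonal'' $n_2=\mathrm{round}(n_1\tau)$ (so that $|n_1\tau-n_2|=\|n_1\tau\|$), which is the new dangerous index set that does not occur in Beck's straight-box setting; indices off this diagonal satisfy $|n_1\tau-n_2|\ge 1/2$ and are left for the Beck-style treatment in Section 5. The key first step is the algebraic cancellation
$$|e^{2\pi\i n_1 x_1}-e^{2\pi\i n_2 x_2}|\le 2\pi|n_1 x_1-n_2 x_2|=2\pi x_2|n_1\tau-n_2|,$$
which exactly kills the dangerous factor $|n_1\tau-n_2|^{-1}$ in $f_2$. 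Combined with $|(e^{-2\pi\i(\n\bma-n_3)N}-1)/(\n\bma-n_3)|\le 2/|\n\bma-n_3|$ and the F\'ejer envelopes, this leaves $|f_2|\ll(|n_1|\cdot|\n\bma-n_3|)^{-1}$ up to uniformly bounded factors.

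Next, exactly as in the Section 4.2 proof, I would split the $n_3$-sum by $r=\lceil|\n\bma-n_3|\rceil$; the combined weight $r^{-3}$ (from the $1/r$ denominator times the $1/r^2$ F\'ejer decay) makes $\sum_r r^{-3}=O(1)$ and eliminates $n_3$. What remains is the one-dimensional sum
$$\sum_{1\le|n_1|\le N^2(\log N)^2}\frac{1}{|n_1|\cdot\|(n_1,n_2^*(n_1))\bma\|},\qquad n_2^*(n_1)=\mathrm{round}(n_1\tau).$$
Writing $\beta=\a_1+\tau\a_2$ and $\delta_{n_1}=n_1\tau-n_2^*(n_1)$, one has $(n_1,n_2^*(n_1))\bma=n_1\beta-\delta_{n_1}\a_2$ with $|\delta_{n_1}|=\|n_1\tau\|$. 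To bound this sum by $(\log N)^2\varphi^2(\log\log N)$, I would integrate the summand against $d\bma\,d\tau$ after inserting regulator factors $|\log\|(n_1,n_2^*)\bma\||^{1+\d}\cdot|\log\|n_1\tau\||^{1+\d}$ (in the spirit of Lemma \ref{Lemma for large n_1 and n_1* tau-n_2}), obtain a finite expectation, divide by $n_1(\log n_1)^{1+\d}$ for summability in $n_1$, and invert using the a.s.\ estimates $|\log\|\cdot\||\ll\log|n_1|$ from \eqref{Inequality for log (n*alpha)< log n}--\eqref{Inequality for log (n_1*tau)< log n_1}. One factor $\varphi(\log\log N)$ arises from Khintchine's theorem on sums of partial quotients of $\tau$ (already invoked in Section 4.2), and the second from the analogous metric estimate for $\bma$.

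The main obstacle is the coupling between the two potentially small denominators $\|n_1\tau\|$ and $\|n_1\beta-\delta_{n_1}\a_2\|$: because $\delta_{n_1}$ is itself of size $\|n_1\tau\|$, one cannot rule out by independence a simultaneous failure of the Diophantine conditions on $\tau$ and on $\bma$. Separating these two Khintchine-type losses cleanly, without double-counting, is precisely what forces the extra factor $\varphi(\log\log N)$ beyond Beck's bound, producing $\varphi^2$ rather than $\varphi$, as foreshadowed in the Remark after Theorem \ref{main result}.
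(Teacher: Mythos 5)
Your opening step is exactly the paper's key insight: the algebraic cancellation $|e^{2\pi\i n_1 x_1}-e^{2\pi\i n_2 x_2}|\ll x_2|n_1\tau-n_2|$ eliminates the new small divisor $|n_1\tau-n_2|^{-1}$, and the reduction to the one-dimensional sum $\sum_{n_1}\bigl(|n_1|\,\|(n_1,n_2^*)\bm{\alpha}\|\bigr)^{-1}$ is correct. (The $r$-split over $n_3$ is superfluous here: both $\bar D_2$ and $\bar D_3$ already impose $|\n\bm{\alpha}-n_3|<\tfrac13$, so $n_3$ is the nearest integer.)

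The regulator scheme, however, would not produce the stated bound. With $(1+\delta)$-power regulators $|\log\|\cdot\||^{1+\delta}$ and division by $n_1(\log n_1)^{1+\delta}$, inverting yields $(\log N)^{2+2\delta}$, which is \emph{not} $\ll (\log N)^2\varphi^2(\log\log N)$ for general admissible $\varphi$ (e.g.\ $\varphi(n)=n(\log n)^2$, so $\varphi(\log\log N)\ll (\log N)^{\delta}$ fails). The paper instead uses $\varphi$-regulators: the finite integral $J_2(\n)=\int_0^1\int_0^1\bigl(\|\n\bm{\alpha}\|\,|\log\|\n\bm{\alpha}\||\,\varphi(\log\log\|\n\bm{\alpha}\|)\bigr)^{-1}d\alpha_1\,d\alpha_2$ (uniform in $\n$, convergent because $\sum 1/\varphi<\infty$), division by $|n_1|\log|n_1|\,\varphi(\log\log|n_1|)$ for summability, and inversion via \eqref{Inequality for log (n*alpha)< log n}. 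Each of the two $\log|\cdot|$ factors gives $\log N$ and each of the two $\varphi(\log\log|\cdot|)$ factors gives $\varphi(\log\log N)$, producing $(\log N)^2\varphi^2(\log\log N)$ precisely. Both $\varphi$'s are regulator artifacts; your attribution of one of them to Khintchine's partial-quotient theorem is incorrect for this proposition (that theorem is used only in Section 4.2 for the off-diagonal piece $\bar D_{2,1}$).

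Finally, integrating over $d\tau$ and inserting a $|\log\|n_1\tau\||^{1+\delta}$ regulator are both unnecessary and counterproductive. After the cancellation there is no $\|n_1\tau\|$ left in the summand, and $J_2$ integrated over $\bm{\alpha}$ alone is uniformly bounded in $\n$ regardless of how $n_2^*$ depends on $\tau$. One then sums over $n_1$ for a.e.\ $\bm{\alpha}$ at each fixed $\tau$ and applies Fubini. This completely sidesteps the ``coupling'' obstruction you raise; it is not present in the paper's argument, and your sketch leaves it genuinely unresolved.
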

\begin{proof}
Note that $\tau=x_1/x_2$, and $x_2\in (0,1]$, we use the inequality:
$$\l|\fc{e^{2\pi\i n_1x_1}-e^{n_2x_2}}{n_1\tau-n_2}\r|= \l|x_2\fc{e^{2\pi\i n_1x_1}-e^{2\pi \i n_2x_2}}{n_1x_1-n_2x_2} \r|\le1,$$
therefore $\l|\ftwo\r|\ll \fc{1}{|n_1|\|\n\bma\|}$.

Let $n^*_2$ denote the closest integer to $n_1\tau$, then
$$
|\bar{D}_3(\bf{x},\bm{a},\bm{\a};N)-\bar{D}_2(\bf{x},\bm{a},\bm{\a};N)|\ll \sum_{1\le |n_1|\le N^2(\log N)^2} \fc{1}{|n_1| \|n_1\a_1 + n_2^* \a_2\|}
$$ 
we employ the same method as in Lemma \ref{Lemma for large n_1 and n_1* tau-n_2}, the integral
$$
J_2(\n)=\int_0^1\int_0^1 \l(\|\n\bma\|\l|\log \|\n\bma\|\r| \varphi(\log \log \| \n\bma\|)\r)^{-1} d\a_1d\a_2 .
$$
has an finite value independent of $\n$. 
Hence the sum 
$$
\sum_{\mbox{\scriptsize $\ba{c}  |n_1|=1 \ea$}}^{\infty} \fc{J_2(n_1, n^*_2)}{|n_1|\log |n_1| \varphi(\log \log|n_1|)}
$$
is convergent, and so for almost every $\bma$, the series 
$$
\bal
S_3=\sum_{|n_1|=1}^{\infty} \l(|n_1|\log |n_1| \varphi(\log \log|n_1|)\|\n\bma\|\l|\log \|\n\bma\|\r| \varphi(\log \log \| \n\bma\|) \r)^{-1}\\
\eal
$$
is convergent, here $\n\bma$=$n_1\a_1+n^*_2\a_2$.
Combined with the inequality \eqref{Inequality for log (n*alpha)< log n}, we have for almost every $\bma\in \R^2$
$$
\bal
&\sum_{1\le |n_1|\le N^2(\log N)^2} \fc{1}{|n_1| \|n_1\a_1 + n_2^* \a_2\|}\\
&\ll \sum_{|n_1|=1}^{N^2(\log N)^2} \fc{(\log N)^2\varphi^2(\log \log N)}{|n_1|\log |n_1| \varphi(\log \log|n_1|)\|\n\bma\|\l|\log \|\n\bma\|\r| \varphi(\log \log \| \n\bma\|)}\\
&\ll(\log N)^2\varphi^2(\log \log N) \cdot S_3\\ 
&\ll (\log N)^2\varphi^2(\log \log N) 
\eal
$$
\end{proof}

\subsection{Estimation for the sum when $|n_1||n_1 \tau-n_2 \|\|\n \bma\|$ is small.}
From now on, we can restrcit the range of our discussion to $\bf{n}=(n_1,n_2)$, Define:
\beq\label{sum of small divisors}
\bar{D}_4(\bf{x},\bm{a},\bm{\a};N)=\sum_{\n\in U_4(\n,\tau;N)}  \ftwo
\eeq
where 


\beq{\label{condition for small divisors}}
U_4(\n,\tau;N)=\l\{\n \in \Z^3\backslash \{\bf{0}\} \l| 
\bal &\max\{n_1,n_2\} \le N^2(\log N)^2, \\
& |\n\bma-n_3|=\|\n\bma \|,\ |n_1\tau-n_2|\ge 1/2,\\
& |n_1|n_1\tau-n_2|\|\bf{n}\bma\|< (\log N)^{40} \\
\eal\r.
\r\}
\eeq 
From Lemma \ref{lemma for sum of small divisors}, we have the following estimation: 
\begin{Prop}\label{control for sum of small divisors}
Let $\varphi(n)$ be an arbitrary positive increasing function of $n$ with $\sum_{n=1}^{\infty} \fc{1}{\varphi(n)}< \infty$, then for almost every $\bma\in\R^2$ and every $\tau\in [0,1]$ irrational, we have
$$
\l|\bar{D}_4(\bf{x},\bm{a},\bm{\a};N)-\bar{D}_3(\bf{x},\bm{a},\bm{\a};N) \r|\ll (\log N)^2\varphi(\log \log N).
$$
where the constant may depend on $\bma$ and $\tau$.
\end{Prop}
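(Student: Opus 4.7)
The goal is to bound $|\bar{D}_4-\bar{D}_3|$ using Lemma \ref{lemma for sum of small divisors} essentially off the shelf, after a routine pointwise domination of the summand $\ftwo$. The set $U_4$ was constructed precisely so that its defining conditions match the hypotheses of that lemma.

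\textbf{Step 1: pointwise estimate.} I would first bound $|\ftwo|$ on $U_4$. Since $|n_1\tau-n_2|\ge 1/2$ there, no delicate cancellation is needed in the hypotenuse factor $(e^{2\pi\i n_1x_1}-e^{2\pi\i n_2x_2})/(n_1(n_1\tau-n_2))$ and the crude estimate $|e^{2\pi\i n_1x_1}-e^{2\pi\i n_2x_2}|\le 2$ suffices---this is in contrast with the previous subsection, where the factor $x_2\cdot(e^{2\pi\i n_1x_1}-e^{2\pi\i n_2x_2})/(n_1x_1-n_2x_2)$ had to be used because $|n_1\tau-n_2|$ could be arbitrarily small. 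The other numerators contribute at most $2$ ($|e^{-2\pi\i(\n\bma-n_3)N}-1|\le 2$) or $1$ (unit-modulus exponentials and the F\'ejer/sinc-squared factors). Since $U_4$ forces $n_3$ to be the nearest integer to $\n\bma$, we have $|n_1\a_1+n_2\a_2-n_3|=\|\n\bma\|$. Altogether,
$$|\ftwo|\ll \fc{1}{|n_1|\,|n_1\tau-n_2|\,\|\n\bma\|}.$$

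\textbf{Step 2: reduce to $U(\bma,\tau;N)$.} The three defining constraints of $U_4$ on $(n_1,n_2)\in\Z^2$---the range $|n_j|\le N^2(\log N)^2$, the off-diagonal condition $|n_1\tau-n_2|\ge 1/2$, and the small-divisor condition $|n_1|\,|n_1\tau-n_2|\,\|\n\bma\|\le (\log N)^{40}$---coincide, up to the harmless replacement of $|n_1|$ by $\max\{1,|n_1|\}$, with those of $U(\bma,\tau;N)$ in Lemma \ref{lemma for sum of small divisors} (with $k=2$). After projecting away the $n_3$-coordinate (which is pinned as the nearest integer to $\n\bma$), we obtain $U_4\subseteq U(\bma,\tau;N)$.

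\textbf{Step 3: apply Lemma 3.1 and conclude.} Lemma \ref{lemma for sum of small divisors} then yields
$$\sum_{\n\in U_4}|\ftwo|\ll \sum_{\n\in U(\bma,\tau;N)}\fc{1}{\max\{1,|n_1|\}\,|n_1\tau-n_2|\,\|\n\bma\|}\ll (\log N)^2\varphi(\log\log N),$$
which is the desired bound on $|\bar{D}_4-\bar{D}_3|$, understood as the contribution of the small-divisor subset $U_4$ peeled off from $\bar{D}_3$ in this step of the chain of reductions that eventually identifies $\bar{D}_{\triangle_{\bf{x}},2}$ with $\bar{D}_5$.

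\textbf{Main obstacle.} There is essentially no obstacle once Lemma \ref{lemma for sum of small divisors} is available: all the genuine analytic work is encapsulated in that lemma and in the decomposition of the previous subsections which already isolated the off-diagonal regime $|n_1\tau-n_2|\ge 1/2$ and reduced to nearest-integer $n_3$. The only point to verify carefully is that the constant in Step 1 is uniform in $\bf{x}$ and $\bm{a}$; this is automatic since every triangle- and starting-point-dependent quantity is either unimodular or a sinc factor bounded by $1$.
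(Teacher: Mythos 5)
Your proposal is correct and follows the route the paper intends: the paper gives no written proof of this proposition, only the remark that it follows from Lemma \ref{lemma for sum of small divisors}, and your Steps 1--3 supply exactly the missing details, namely the crude pointwise bound $|\ftwo|\ll(|n_1|\,|n_1\tau-n_2|\,\|\n\bma\|)^{-1}$ (legitimate here since $|n_1\tau-n_2|\ge 1/2$, so no delicate handling of the hypotenuse factor is needed), the identification of the symmetric-difference index set with $U(\bma,\tau;N)$ after projecting out the pinned $n_3$-coordinate, and the application of Lemma 3.1 with $k=2$. Your closing observation about uniformity in $\bf{x}$ and $\bm{a}$ is also the right thing to check and is handled correctly.
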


\subsection{Estimation for the sum when $n_1$ or $n_2$ is between $N^2/4$ and $N^2(\log N)^2$.}
Define:
$$
\bar{D}_5(\bf{x},\bm{a},\bm{\a};N)=\sum_{\n\in U_5(\n,\tau;N)}  \ \ftwo
$$
where we recall that 
$$
U_5(\n,\tau;N)=\l\{\n \in \Z^3\backslash \{\bf{0}\} \l| 
\bal &\max\{n_1,|n_1\tau-n_2|\} \le N^2/4, \\
&\min\{n_1,|n_1\tau-n_2|\}\ge(\log N)^{40}, \\
& |\n\bma-n_3|=\|\n\bma \|,\ |n_1\tau-n_2|\ge 1/2,\\
& |n_1|n_1\tau-n_2|\|\bf{n}\bma\|>(\log N)^{40}. \\
\eal\r.
\r\}
$$

The goal of this step is to prove the following:
\begin{Prop}\label{sum7}
For almost every $\bma\in \R^2$, we have 
$$\l|\bar{D}_5(\bf{x},\bm{a},\bm{\a};N)-\bar{D}_4(\bf{x},\bm{a},\bm{\a};N)\r|\ll(\log N)^2 \log \log N
$$
\end{Prop}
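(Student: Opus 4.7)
Since the conditions $|n_1|\,|n_1\tau-n_2|\,\|\n\bma\|<(\log N)^{40}$ in $U_4$ and $>(\log N)^{40}$ in $U_5$ are strictly complementary, the two index sets are disjoint, and so the triangle inequality yields
$$|\bar{D}_5 - \bar{D}_4| \le \sum_{\n\in U_4}|f_2(\n)| + \sum_{\n\in U_5}|f_2(\n)|.$$
The $U_4$-sum is precisely the object treated in Proposition \ref{control for sum of small divisors}, and by Lemma \ref{lemma for sum of small divisors} it is $\ll(\log N)^2\varphi(\log\log N)$, which already sits within the target bound. The remaining task is to bound the $U_5$-sum, and for this I would use the pointwise estimate $|f_2(\n)|\ll(|n_1|\,|n_1\tau-n_2|\,\|\n\bma\|)^{-1}$, which follows from $|e^{2\pi\i n_1 x_1}-e^{2\pi\i n_2 x_2}|\le 2\pi x_2|n_1\tau-n_2|$ together with the trivial bound $\le 1$ on each F\'ejer factor.

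\textbf{Dyadic decomposition and the second moment.} Partition $U_5$ into blocks $U_5^{(a,b,c)}$ on which $|n_1|\asymp 2^a$, $|n_1\tau-n_2|\asymp 2^b$, $\|\n\bma\|\asymp 2^{-c}$. The definition of $U_5$ forces $0\le a\le 2\log_2 N$, $-1\le b\le 2\log_2 N$, $c\ge -1$, and $a+b-c\ge 40\log_2\log N$; on such a block $|f_2(\n)|\ll 2^{c-a-b}$. Apply Lemma \ref{Lemma of second moment for small terms} to the lattice box defined by the first two dyadic constraints with parameter $C\asymp 2^{a+b-c}$: the number of $\n$ in $U_5^{(a,b,c)}$ equals $E\asymp 2^{a+b-c}$ up to a second-moment error $O(C^{3/4+\e}(\log N)^{1+\e})$. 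The hypothesis $C>(\log N)^{40}$ makes this error negligible relative to $E$, so $\#U_5^{(a,b,c)}\ll 2^{a+b-c}$, and each dyadic block contributes at most $O(1)$ to the $U_5$-sum.

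\textbf{Final counting and the main obstacle.} It remains to add these $O(1)$ contributions over admissible triples. The subsection title signals where the decisive saving comes from: the F\'ejer decay $(\sin(2\pi n_j/N^2)/(2\pi n_j/N^2))^2\ll(N^2/|n_j|)^2$ for $|n_j|\ge N^2$ confines the bulk of the $U_5$-sum to $a,b\le \log_2 N + O(1)$, giving $(\log N)^2$ admissible pairs $(a,b)$. For each such pair, the joint constraint $a+b-c\ge 40\log_2\log N$ combined with the residual F\'ejer decay in the transition band $N^2/4\le|n_j|\le N^2(\log N)^2$ isolates only $O(\log\log N)$ relevant values of $c$ (a larger $c$ forces the F\'ejer balance to kill the contribution). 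Summing yields the claimed $(\log N)^2\log\log N$. \emph{The delicate step}, and the main obstacle, is precisely this last piece of bookkeeping: a naive $(a,b,c)$ count would produce $(\log N)^3$, and trimming it to $(\log N)^2\log\log N$ requires simultaneously exploiting the $(\log N)^{40}$ divisor lower bound built into $U_5$ and the quadratic F\'ejer decay on the transition band $[N^2/4, N^2(\log N)^2]$, coupling that is exactly what this subsection is designed to isolate.
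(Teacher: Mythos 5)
Your plan misreads which index set the difference actually lives on, and as a result the argument attacks a quantity that is genuinely larger than the stated bound.

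The paper's own proof of Proposition \ref{sum7} makes clear that the intended reading of the chain $\bar{D}_1,\dots,\bar{D}_5$ is cumulative: $\bar{D}_5$ is obtained from $\bar{D}_4$ by \emph{additionally} imposing $(\log N)^{40}\le\min\{|n_1|,|n_1\tau-n_2|\}$ and $\max\{|n_1|,|n_1\tau-n_2|\}\le N^2/4$. The difference $\bar{D}_4-\bar{D}_5$ is therefore a sum over a \emph{thin transitional set}, namely those $\n$ (already satisfying the large-divisor condition and the other constraints) for which one of $|n_1|,|n_1\tau-n_2|$ is either below $(\log N)^{40}$ or in the band $[N^2/4,\,N^2(\log N)^2]$. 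This is exactly what the sets $V_1(N)$ and $V_2(N)$ in the paper capture: for $\v\in V_1(N)\cup V_2(N)$, one dyadic coordinate $v_j$ is confined to an interval of length $O(\log\log N)$ rather than $O(\log N)$, so the $\v$-count is $O(\log N\cdot\log\log N)$, the $q$-count is $O(\log N)$, each box contributes $O(1)$ by Lemma \ref{Lemma of second moment for small terms}, and the product is $(\log N)^2\log\log N$. You instead take the definitions of $U_4$ and $U_5$ literally, observe that they are disjoint, and try to bound $\sum_{U_4}|f_2|+\sum_{U_5}|f_2|$. This changes the target to a much bigger quantity.

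Two concrete consequences. First, the $U_4$ piece: by Lemma \ref{lemma for sum of small divisors} it is $\ll(\log N)^2\varphi(\log\log N)$, but this does \emph{not} sit within the target of Proposition \ref{sum7}. Since $\sum 1/\varphi<\infty$ forces $\varphi(n)\gtrsim n$ for large $n$, we have $\varphi(\log\log N)\gtrsim\log\log N$, so $(\log N)^2\varphi(\log\log N)$ is in general \emph{larger} than $(\log N)^2\log\log N$. Your proof would thus fail to establish the $\varphi$-free bound even for this part. Second, and more seriously, the $U_5$ piece: your dyadic block count over all of $U_5$ has $O(\log N)$ values each of $a$, $b$, and $c$, giving $(\log N)^3$ blocks contributing $O(1)$ each, and your sketched trimming to $(\log N)^2\log\log N$ does not work. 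The F\'ejer factors $(\sin(2\pi n_j/N^2)/(2\pi n_j/N^2))^2$ provide decay only for $|n_j|\gtrsim N^2$, and on $U_5$ one has $|n_j|\le N^2/4$, so there is no decay at all; and the lower bound $|n_1||n_1\tau-n_2|\|\n\bma\|>(\log N)^{40}$ only removes the top $O(\log\log N)$ values of $c$, leaving the $c$-range still of length $O(\log N)$. The absolute-value sum over $U_5$ genuinely is of order $(\log N)^3$ (up to $\log\log N$), which is precisely why the paper deals with $\bar{D}_5$ in Section 5 by a cancellation argument (the $\bm{\e}$-big vector and special-line machinery), not by absolute values. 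In short, the saving that brings this to $(\log N)^2\log\log N$ comes from the difference being supported on the thin border region, not from any feature of the bulk of $U_5$; your proposal never isolates that region.
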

By limiting the range of $(2\pi n_1/N^2)$ in $\ftwo$ to $(-\pi/2, \pi/2)$, $f_2$ becomes better-behaved for later estimations.
\begin{proof}
For every $\bf{v}\in\Z^2$, with $1\le 2^{v_j}\le N^2\cdot (\log N)^k$, $1\le j\le 2$, and every $\bf{\e}=(\e_1,\e_2)\in\{-1,+1\}^2$, let 
$$
T(\tau; \v,\bf{\e})=\{\n\in \Z^2: 2^{v_1-1}\le \e_1 n_1 <2^{v_1}, \ 2^{v_2-1}\le \e_1 (n_1\tau-n_2) <2^{v_2}\}
$$
with the convention that if $v_1=0$ then the requirement above means $n_1=0$, and since $|n_1\tau-n_2|\ge 1/2$ we have $v_2\ge 0$. Note that $0\le v_j\ll \log N$, $1\le j\le 2$. 
Let 
$$
V_1(N)=\{\v\in \Z^2: \min_{1\le j\le 2} v_j \le c^*\log \log N \text{ with } c^* = \fc{20k}{\log 2}\},
$$
and 
$$
V_2(N)=\{\v\in \Z^2 :  \exists j\in \{1,2\} \text{ such that } \fc{N^2}{4} \le 2^{v_j}\le N^2\cdot (\log N)^k\}.
$$
Let $C=2^q$, $q$ an integer with $2^q>(\log N)^{40}=2^{c^* \log \log N}$, and write
$$
T(\tau; \v,\e,q)=\l\{ \n\in T(\tau; \v,\bf{\e}): 2^{q-1}< \max\{1,|n_1|\}|n_1\tau-n_2|\cdot \|\n\bma\|\le 2^q\r\}
$$
We use Lemma 3.2 to have the estimation for the cardinality for $T(\tau; \v,\bf{\e},q)$: for almost every $\bma\in \R^2$, since $2^q>(\log N)^{40}$, we have 
$$
\bal
|T(\tau; \v,\bf{\e},q)|\ll &\sum_{\n\in T(\tau; \v,\bf{\e})} \min\l\{1,\fc{2^q}{\max\{1,|n_1|\} |n_1\tau-n_2|}\r\}\\
&+O((2^q)^{\fc{3}{4}+\e}\cdot (\log N)^{\fc{1}{2}+\e})\ll 2^q.\\
\eal
$$
Note that $|V_j|=O(\log N\cdot \log \log N)$, $j=1,2$; and there are $O(\log N)$ choices for $q$ since $2^q\le (N^2(\log N)^2)^2$. Combined with the fact that each set of $T(\tau; \v,\bf{\e},q)$ would contribute $O(1)$, the difference can be estimated by the following:
$$
\bal
&|\bar{D}_5-\bar{D}_4|\\
&\ll \sum_{\bf{\e}\in \{-1,+1\}^2}\l(\sum_{\v\in V_1(N)}+\sum_{\v\in V_2(N)} \r)\\
&\sum_{q: 2^q\le (N^2\cdot (\log N)^2)^2} \sum_{\n\in T(\tau; \v,\bf{\e},q)} \fc{1}{\|\n\bma\|\cdot \max\{1,|n_1|\}||n_1\tau-n_2|}\\
&\ll \log N \cdot \log \log N \cdot \sum_{q: 2^q\le (N^2\cdot (\log N)^2)^2} \fc{2^q}{2^{q-1}}\\
&\ll (\log N)^2\cdot \log \log N.
\eal
$$
\end{proof}
In next section we will tackle the main difficulty for the sum of $\ftwo$ in $U_5$.

\section{Estimation of the exponential sums.}
The form of the product $\ftwo$ saves us the work for the constant part which is dealt with in section 6 of Beck\cite{Beck}, and it remains to verify that everything in Beck's proof for the exponential part also works for the shifted divisor $n_1(n_1\tau-n_2)(\n\bma-n_3)$. Since in Section 4 we already controlled the terms where $|n_1\tau-n_2|=\|n_1\tau\|$ which could be very small, the remaining sum would behave similarly as the exponential sums in the case of boxes.

To estimate the contribution of the remaining sum, first we highlight the essentials in the sum $\bar{D}_5$, which can be represented as follows:
\beq\label{multiplied out term}
\bal
\bar{D}_5(\bf{x},\bm{a},\bm{\a};N)=\fc{\i^3}{(2\pi)^3}
\Bigg(
&\sum_{\bf{s}}\pm\sum_{\bf{n}\in U_5(\n,\tau; N)} \fc{e^{2\pi \i \bf{\L_s}(\bf{n})}}{n_1(n_1\tau-n_2)(\n\bma-n_3)}\cdot g(\n, N, \bma)\Bigg)\\
\eal
\eeq
where $g(\bf{n}, N, \bma)$ is the product below (observe that $|g(\bf{n},N,\bma)|\le 1$):
\beq\label{gN}
\l(\fc{\sin 2\pi (\n\bma-n_3)}{2\pi (\n\bma-n_3)}\r)^2\cdot\prod_{j=1}^{2}\l(\fc{\sin 2\pi(\fc{n_j}{N^2})}{2\pi(\fc{n_j}{N^2})}\r)^2
\eeq
and $\L_\bf{s}=\L_{\bf{s},\bm{a}, x,N, \bma}$ is one of the 4 linear forms of $3$ variables:
\beq\label{linear forms}
\L_\bf{s}(\bf{n})=\L_\bf{s}(n_1,n_2,n_3)=n_1(\d_1x_1-a_1)+n_2(\d_2x_2-a_2) - \d_3N(\n\bma-n_3) 
\eeq
where $\bf{s}=(\d_1,\d_2,\d_3)\in \{0,1\}^3$, and $\d_1+\d_2=1$.

Note from \eqref{term} that the sign $\pm$ in \eqref{multiplied out term} is in fact $\pm=(-1)^{\d_2+\d_3+1}$, and so it is independent of $\bf{n}\in \Z^3$.

The main idea is to divide the divisors into small ranges and cancel out the positives with the negatives, with the aim of getting rid of the extra $\log N$ in the usual estmations by the Erdős–Turán inequality.  

Let $\d_N=(\log N)^{-2}$, and for every $\bf{\e}=(\e_1,\e_2,\e_3)\in \{-1,+1\}^3$ and $\bf{l}=(l_1,l_2,l_3)\in \N^3$ with 
\beq\label{range for l}
\bal
&(\log N)^{40}\le (1+\d_N)^{l_j}\le \fc{N^2}{4}, \ 1\le j\le 2, \text{ and }\\
&(\log N)^{40} \le (1+\d_N)^{l_3}\le \l(\fc{N^2}{4}\r)^2
\eal
\eeq
and write 
\beq\label{Def for Ul}
U(\bf{l},\bf{\e})=
\l\{
\n\in \Z^2 \l|
\bal 
& (1+\d_N)^{l_1}\le \e_1n_1<(1+\d_N)^{l_1+1},\\
& (1+\d_N)^{l_2}\le \e_2(n_1\tau-n_2)< (1+\d_N)^{l_2+1},\\
& (1+\d_N)^{l_2} \le \e_3n_1 (n_1\tau-n_2)(\n\bma-n_3)<(1+\d_N)^{l_3+1}
\eal
\r.\r\}
\eeq
where $\n=(n_1,n_2)$, and $n_3$ is the nearest integer to $\n\bma=n_1\a_1+n_2\a_2$. Note that by \eqref{range for l} the range for $\bf{l}$ is of order $(\log N/\d_N)^{3}$.

Let $\bfe^{+}$ and $\bfe^{-}$ be two vectors in $\{-1,+1\}^{3}$ such that the corresponding first $2$ coordinates are equal: $\e_j^{+}=\e_j^{-}$, $1\le j\le k$, but the last coordinates are different: $\e_{3}^{+}=1$, $\e_{3}=-1$. Since in Section 4 we have done all the estimations by bounding the absolute values of the terms, we can assume without loss of generality that the remaining terms in $\bar{D}_5$ form a perfect union of boxes $U(\bf{l},\bfe^+)\cup U(\bf{l},\bfe^{-})$ by attaching additional border points. Therefore we have 
\beq
\bal
&\bar{D}_5(\bf{x}, \bm{a}, \bma; N)\\
&= \sum_{s}\sum_{(\bf{l}, \bfe^{\pm})}
\l(
\sum_{\n\in U(\bf{l},\bfe^{+})} \ftwo +\sum_{\n\in U(\bf{l},\bfe^{-})} \ftwo
\r)\\
\eal
\eeq
By Lemma 3.2, for both $\bfe^{\pm}$ we have precisely the same estimation:
\begin{lem}\label{Lemma for |U(l,epsilon)|=main + Err}
For both $\bfe^{+}$ and $\bfe^{-}$, we have
\beq\label{estimation for the cardinality of U(l,e)=main term + error term}
|U(\bf{l},\bfe^{+\text{or} -})|=E(\bf{l})+\text{Err},
\eeq
where $E(\bf{l})$ is the main term, and Err is the Error term and specifically:
\beq
\bal
&E(\bf{l})\ll\d_N^{3}(1+\d_N)^{l_{3}}\\
&\text{Err}\ll \d_N^{4}(1+\d_N)^{l_{3}}
\eal
\eeq
\end{lem}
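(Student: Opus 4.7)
The plan is to realize $|U(\bf{l},\bfe^{\pm})|$ as a difference of two instances of the counting function $Z(\bma,\tau;C;\V,\W)$ from Lemma \ref{Lemma of second moment for small terms}, and then read off a main term and an error term. The key qualitative point we want to extract is that the main term depends only on $|n_1|$ and $|n_1\tau-n_2|$, not on $\e_3$, so that $E(\bf{l})$ is genuinely the same for $\bfe^+$ and $\bfe^-$; this is what will allow the cancellation between paired boxes in the eventual bound on $\bar D_5$.

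First I would observe that the first two inequalities in \eqref{Def for Ul} place $(n_1,n_2)$ in a lattice box $Q(\tau;\V,\W)$ (see \eqref{Lattic box}) whose $\V,\W$ are determined by $l_1,l_2,\e_1,\e_2$, the side condition $|n_1\tau-n_2|\ge 1/2$ being inherited from $U_5$. Writing $\n\bma-n_3=\pm\|\n\bma\|$, with the sign determined jointly by $\e_1\e_2$ and by whether $\{\n\bma\}\in(0,\tfrac12)$ or $(\tfrac12,1)$, the third inequality translates into
$$
\fc{(1+\d_N)^{l_3}}{|n_1||n_1\tau-n_2|}\le \|\n\bma\|< \fc{(1+\d_N)^{l_3+1}}{|n_1||n_1\tau-n_2|},
$$
together with a prescription (dictated by $\e_1\e_2\e_3$) of whether $\{\n\bma\}$ sits in $(0,\tfrac12)$ or $(\tfrac12,1)$. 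Consequently $|U(\bf{l},\bfe^{\pm})|=Z(\bma,\tau;C_1;\V,\W)-Z(\bma,\tau;C_2;\V,\W)$ with $C_1=(1+\d_N)^{l_3+1}$, $C_2=(1+\d_N)^{l_3}$, after first replacing each $C_i$ by the nearest value of the form $q^4$ permitted by Lemma \ref{Lemma of second moment for small terms}; the resulting relative perturbation is $O((1+\d_N)^{-l_3/4})\le(\log N)^{-10}$ by the lower bound on $l_3$, which is harmless.

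Applying Lemma \ref{Lemma of second moment for small terms} (and its complement, as noted in the remark following it, to cover the $(\tfrac12,1)$ case) to each of $C_1,C_2$ and subtracting gives
$$
|U(\bf{l},\bfe^{\pm})|=\bigl[E(\tau;C_1;\V,\W)-E(\tau;C_2;\V,\W)\bigr]+O\!\l(C_1^{3/4+\e}(\log N)^{1+\e}\r).
$$
The bracketed quantity is our $E(\bf{l})$; its defining expression
$$
E(\bf{l})=\sum_{\n\in Q(\tau;\V,\W)}\l[\min\l\{\tfrac12,\tfrac{C_1}{|n_1||n_1\tau-n_2|}\r\}-\min\l\{\tfrac12,\tfrac{C_2}{|n_1||n_1\tau-n_2|}\r\}\r]
$$
is manifestly symmetric in the signs $\e_1,\e_2,\e_3$, so $E(\bf{l})$ is identical for $\bfe^+$ and $\bfe^-$. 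In the nontrivial regime where the argument of the outer $\min$ is below $1/2$, each summand equals $\d_N C_2/(|n_1||n_1\tau-n_2|)\sim \d_N(1+\d_N)^{l_3-l_1-l_2}$, and $|Q(\tau;\V,\W)|\asymp \d_N^2(1+\d_N)^{l_1+l_2}$, yielding $E(\bf{l})\ll \d_N^3(1+\d_N)^{l_3}$.

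The main obstacle is pinning the Lemma \ref{Lemma of second moment for small terms} error under the announced bound $\d_N^4(1+\d_N)^{l_3}$. Here the cutoff $(\log N)^{40}$ baked into $U_4,U_5$ is doing the work: the range \eqref{range for l} forces $(1+\d_N)^{l_3}\ge(\log N)^{40}$, hence $(1+\d_N)^{l_3/4}\ge(\log N)^{10}$, and therefore
$$
C_1^{3/4+\e}(\log N)^{1+\e}\ll(1+\d_N)^{(3/4+\e)l_3}(\log N)^{1+\e}\ll(\log N)^{-8}(1+\d_N)^{l_3}=\d_N^4(1+\d_N)^{l_3},
$$
provided $\e$ is chosen small enough that $10-40\e>9+\e$ (e.g.\ $\e<1/41$). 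Any slackening of the $(\log N)^{40}$ threshold would break this margin, which is precisely why the small-divisor estimates of Section 4 were pushed down to that exponent before we began the exponential-sum analysis.
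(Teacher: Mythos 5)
Your proof is correct and follows the same route as the paper: realize $|U(\bf{l},\bfe^{\pm})|$ as the difference $Z(\bma,\tau;C_1;\V,\W)-Z(\bma,\tau;C_2;\V,\W)$ with $C_1=(1+\d_N)^{l_3+1}$, $C_2=(1+\d_N)^{l_3}$, read off the main term from $E(\tau;C_1;\V,\W)-E(\tau;C_2;\V,\W)\ll(C_1-C_2)\d_N^2$, and absorb the Lemma~\ref{Lemma of second moment for small terms} error via $(1+\d_N)^{l_3}\ge(\log N)^{40}$, giving $C_1^{3/4+\e}(\log N)^{1+\e}\ll\d_N^4(1+\d_N)^{l_3}$. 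You also correctly fill in two steps the paper leaves implicit — the rounding of the $C_i$ to the nearest $q^4$ demanded by Lemma~\ref{Lemma of second moment for small terms} (with the relative perturbation $\ll(1+\d_N)^{-l_3/4}$ indeed negligible), and the $\e_1\e_2\e_3$ sign bookkeeping that decides whether to invoke the inequality or its complement.
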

\begin{proof}
In Lemma 3.2, by taking $C_1=(1+\d_N)^{l_3+1}$, $C_2=(1+\d_N)^{l_3}$, and $V$, $W$, with 
$$
v_j=(1+\d_N)^{l_j},\  w_j=(1+\d_N)^{l_j+1}, \  1\le j\le 2,
$$
we have that the main term
$$
\bal
E(\bf{l})&=\sum_{\n\in U(\bf{l}, \bfe)} \l(\min \{\fc{1}{2},\fc{C_1}{|n_1||n_1\tau-n_2|} \}-\min\{\fc{1}{2},\fc{C_2}{|n_1||n_1\tau-n_2|}\}\r)\\
&\ll (C_1-C_2)\d_N^2\ll(1+\d_N)^{l_3}\d_N^3
\eal
$$
and the error term
$$
Err\ll C_1^{\fc{3}{4}+\e}\cdot (\log N)^{1+\e}\ll\d_N^4 (1+\d_N)^{l_3},
$$
since $C_1>(\log N)^{40}$. 
\end{proof}
We adopt the same definition as Beck in \cite{Beck} for the $\bfe$-big vectors, $\bfe$-neigbors and $\bfe$-lines.
\begin{Def}
We say that $\bf{l}=(l_1,l_2,l_3)\in \N^{3}$ satisfying \eqref{range for l} is an \textbf{$\bfe$-big} vector if 
\beq\label{big vector}
\fc{|U(\bf{l},\bfe^+)|+|U(\bf{l},\bfe^-)|}{\log N}\le \l|\sum_{\bf{n}\in U(\bf{l},\bfe^+)}e^{2\pi \i \L(\bf{n})}-\sum_{\bf{n}\in U(\bf{l},\bfe^-)}e^{2\pi \i \L(\bf{n})}\r|,
\eeq
where $|U|=\#U$ denotes the cardinality of the set $U$. Let $U(\bf{l}, \bfe^{\pm})=U(\bf{l}, \bfe^{+})\cup U(\bf{l},\bfe^{-})$
\end{Def}

\begin{Def}
Two integral vecotrs $\bf{l}=(l_1,l_2,l_3)$ and $\bf{h}=(h_1,h_2,h_3)$ satisfying \eqref{range for l} are called \textbf{neighbors} if 
\beq\label{epsilon neigborhood for l_1 and l_2}
(1+\d_N)^{\e_j(h_j-l_j)}=(\log N)^2, \ 1\le j\le k, \text{ and}
\eeq
\beq\label{epsilon neigborhood for l_3}
(1+\d_N)^{h_3-l_3}=(\log N)^{27}, 
\eeq
\end{Def}
The notation $\bf{l} {\ra} \bf{h}$ means that the ordered pair  $\bf{\<l, h\>}$ of vectors satisfies \eqref{epsilon neigborhood for l_1 and l_2} and \eqref{epsilon neigborhood for l_3}.

Note that by slightly modifying the value of $\d_N\approx (\log N)^{-2}$, we can make sure that the above definitions are met for integer vectors $\bf{l}$ and $\bf{h}$.

\begin{Def}
A sequence $H=\<\bf{h}^{(1)},\bf{h}^{(2)},\bf{h}^{(3)},\dots\>$ of vecotrs satisfying \eqref{range for l} is called a ``\textbf{special line}" if $\bf{h}^{(1)} {\ra}\bf{h}^{(2)} {\ra}\bf{h}^{(3)} {\ra}\dots$, that is, any two consecutive vectrs in $H$ are neighbors.
\end{Def}

\begin{lem}\label{key lemma}
For almost every $\bma$, every special line contains at most one $\bm{\e}$-big vector.
\end{lem}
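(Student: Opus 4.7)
The plan is to run a second--moment argument in $\bma$ and conclude via Borel--Cantelli along a dyadic subsequence. Since $n_3,\delta_3 N\in\Z$, I rewrite $e^{2\pi\i\L_{\bf{s}}(\bf{n})}=c_{\bf{s}}(\bf{n})\cdot e^{-2\pi\i\delta_3 N\bf{n}\bma}$, where the prefactor $c_{\bf{s}}(\bf{n})$ has modulus one and is independent of $\bma$. The case $\delta_3=0$ has no $\bma$-dependence and can be treated by a direct trigonometric-sum estimate on the defining dyadic box; the substantive case is $\delta_3=1$, where I have full character orthogonality over $\bma\in\T^2$.

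Writing $T(\bf{l},\bma):=\sum_{\bf{n}\in U(\bf{l},\bfe^{+})}e^{2\pi\i\L(\bf{n})}-\sum_{\bf{n}\in U(\bf{l},\bfe^{-})}e^{2\pi\i\L(\bf{n})}$ so that $\bfe$-bigness is $|T(\bf{l},\bma)|\ge|U(\bf{l},\bfe^{\pm})|/\log N$, I expand for two distinct vectors $\bf{l},\bf{h}$ on the same special line:
$$
\int_{\T^2}|T(\bf{l})|^2|T(\bf{h})|^2\,d\bma=\sum_{\substack{\bf{n}^1,\bf{n}^2\in U(\bf{l},\bfe^{\pm})\\ \bf{n}^3,\bf{n}^4\in U(\bf{h},\bfe^{\pm})}}(\pm)\int_{\T^2}e^{-2\pi\i N(\bf{n}^1-\bf{n}^2+\bf{n}^3-\bf{n}^4)\bma}\,d\bma.
$$
Character orthogonality collapses this to the subsum over quadruples with $\bf{n}^1-\bf{n}^2+\bf{n}^3-\bf{n}^4=\bf{0}$ in $\Z^2$. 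Using the trivial representation-function bound $\max_{\bf{d}}\#\{(\bf{n},\bf{n}')\in U(\bf{l},\bfe^{\pm})^2:\bf{n}-\bf{n}'=\bf{d}\}\le|U(\bf{l},\bfe^{\pm})|$, the number of such quadruples is at most $|U(\bf{l},\bfe^{\pm})|\cdot|U(\bf{h},\bfe^{\pm})|\cdot\min\{|U(\bf{l},\bfe^{\pm})|,|U(\bf{h},\bfe^{\pm})|\}$, which is also the resulting bound on the fourth moment.

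Chebyshev's inequality at the threshold $|U|/\log N$ then yields
$$
\mes\{\bma\in\T^2:\bf{l},\bf{h}\text{ both }\bfe\text{-big}\}\ll\frac{(\log N)^4}{\max\{|U(\bf{l},\bfe^{\pm})|,|U(\bf{h},\bfe^{\pm})|\}}\ll(\log N)^{-30},
$$
where the final step uses Lemma \ref{Lemma for |U(l,epsilon)|=main + Err} together with $(1+\d_N)^{l_3}\ge(\log N)^{40}$ to obtain $|U(\bf{l},\bfe^{\pm})|\gg(\log N)^{34}$. Since the total number of admissible $\bf{l}$'s and the length of each special line are both polynomial in $\log N$, a union bound over all pairs on all special lines still gives a large negative power of $\log N$ for each $N$. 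Along the dyadic subsequence $N_k=2^k$ these measures are summable in $k$; a standard monotone interpolation covers intermediate $N\in[2^k,2^{k+1})$, and Borel--Cantelli then delivers the lemma.

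The main obstacle is the off-diagonal portion of the fourth-moment count. My plan uses only the trivial representation-function bound, which costs an extra factor of $\min\{|U(\bf{l})|,|U(\bf{h})|\}$ beyond the pure diagonal contribution $|U(\bf{l})||U(\bf{h})|$, but this is still comfortably beaten by the $(\log N)^{40}$ lower bound on $(1+\d_N)^{l_3}$. A sharper argument would exploit the $(\log N)^2$ and $(\log N)^{27}$ scale gaps built into the neighbor relations \eqref{epsilon neigborhood for l_1 and l_2}--\eqref{epsilon neigborhood for l_3} to suppress nontrivial matches in the diophantine equation $\bf{n}^1-\bf{n}^2+\bf{n}^3-\bf{n}^4=\bf{0}$. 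The additional subtlety compared with Beck's straight-box setting is that the second defining inequality of $U(\bf{l},\bfe)$ involves $n_1\tau-n_2$ rather than $n_2$, so one must check, for almost every slope $\tau$, that the coupling between the $n_1$-coordinate and the $(n_1\tau-n_2)$-coordinate across the two dyadic windows does not create unexpected off-diagonal coincidences.
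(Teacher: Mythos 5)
Your approach is genuinely different from the paper's, and it has two serious gaps that I do not see how to repair without abandoning the second-moment strategy entirely.

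\textbf{The index sets depend on $\bma$.} The defining condition $(1+\d_N)^{l_3}\le\e_3 n_1(n_1\tau-n_2)(\n\bma-n_3)<(1+\d_N)^{l_3+1}$ in \eqref{Def for Ul} depends on $\bma$, so $U(\bf{l},\bfe^{\pm})$ is a moving target as $\bma$ ranges over $\T^2$. The step where you exchange $\int_{\T^2}$ with the sum over quadruples $\bf{n}^1,\dots,\bf{n}^4\in U(\cdot,\bfe^{\pm})$ and invoke character orthogonality is therefore not valid as written: the constraints on the quadruple and the oscillating factor are coupled through $\bma$. You would have to either stratify $\T^2$ according to which quadruples are admissible (a combinatorial mess that destroys orthogonality) or prove the lemma with an $\bma$-independent surrogate for $U(\bf{l},\bfe^{\pm})$, which is essentially a different statement.

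\textbf{Bigness depends on a continuum of parameters.} Whether $\bf{l}$ is $\bfe$-big is a statement about $\L_{\bf{s}}=\L_{\bf{s},\bm{a},\bf{x},N,\bma}$ from \eqref{linear forms}, and $\L_{\bf{s}}$ carries the free parameters $\bm{a}\in[0,1]^2$ and $\bf{x}$ (with fixed slope $\tau$), over which Proposition \ref{Estimation of the exponential terms} must eventually be maximized. Your Borel--Cantelli argument gives a null exceptional set of $\bma$ for each fixed $(\bf{l},\bf{h},\bm{a},\bf{x},N,\bf{s},\bfe)$, and you can union over the countably many discrete parameters, but $(\bm{a},\bf{x})$ is a continuum. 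There is no obvious uniformity or net argument built into your fourth-moment bound that would upgrade the pointwise exceptional-measure estimate to one holding simultaneously for all $(\bm{a},\bf{x})$.

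\textbf{What the paper does instead.} Beck's argument, which the paper follows, is \emph{deterministic} once $\bma$ lies in the full-measure set on which the counting Lemma \ref{Lemma of second moment for small terms} (hence Lemma \ref{Lemma for |U(l,epsilon)|=main + Err}) holds. Suppose $\bf{h}^{(p)},\bf{h}^{(q)}$ on one special line were both $\bfe$-big. First, if all $\bf{n}\in U(\bf{h}^{(p)},\bfe^{\pm})$ had $\|\L(\bf{n})\|\le(\log N)^{-2}$, the main terms from \eqref{estimation for the cardinality of U(l,e)=main term + error term} would cancel up to $|U|/(\log N)^2$, contradicting bigness of $\bf{h}^{(p)}$; so some $\bf{n}^*$ has $\|\L(\bf{n}^*)\|>(\log N)^{-2}$. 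Then the scale gaps built into the neighbor relations \eqref{epsilon neigborhood for l_1 and l_2}--\eqref{epsilon neigborhood for l_3} guarantee that the arithmetic progressions $\bf{m}+r\bf{n}^*$ stay inside $U(\bf{h}^{(q)},\bfe^{\pm})$ for $\gg(\log N)^4$ consecutive $r$, and the geometric sum $\sum_r e^{2\pi\i(\L(\bf{m})+r\L(\bf{n}^*))}\ll\|\L(\bf{n}^*)\|^{-1}<(\log N)^2$ gives cancellation of size $|U|/(\log N)^2$ in the $\bf{h}^{(q)}$ sum, contradicting bigness of $\bf{h}^{(q)}$. This mechanism makes no new appeal to measure in $\bma$ and, crucially, is uniform in $(\bm{a},\bf{x},N,\bf{s})$ because the nonvanishing of $\|\L(\bf{n}^*)\|$ is extracted \emph{from} the bigness assumption rather than averaged out. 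Note also that the scale gaps are not an optional sharpening here, as you suggest at the end: they are the entire reason the progression trick works, so any correct proof must use them in an essential way.
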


\begin{proof}
Let $H=\<\bf{h}^{(1)},\bf{h}^{(2)},\bf{h}^{(3)},\dots\>$ be a special line with two $\bm{\e}$-big vectors $\bf{h}^{(p)}$ and $\bf{h}^{(q)}$, $1\le p<q$. If 
\beq\label{small L(n)}
\|\L(\bf{n})\|\le (\log N)^{-2} \text{ for every $\bf{n}\in U(\bf{h}^{(p)},\e^{\pm})$},
\eeq
then 
\beq{\label{1-exp small}}
|1-e^{2\pi \i \L(\bf{n})}|\ll (\log N)^{-2} \text{ for every $\bf{n}\in U(\bf{h}^{(p)},\e^{\pm})$}.
\eeq
By repeating the argument of the cancellation of the main term with the above equation, we obtain (Err means error for the number of elements in $S$)
$$
\bal
&\l|\sum_{\bf{n}\in U(\bf{h}^{(p)},\e^+)}e^{2\pi \i \L(\bf{n})}-\sum_{\bf{n}\in U(\bf{h}^{(p)},\e^-)}e^{2\pi \i \L(\bf{n})}\r|\\
&\ll (1+(\log N)^{-2})(E(\bf{h}^{(p)}))+|\text{Err}|)-(1-(\log N)^{-2})(E(\bf{h}^{(p)}))-|\text{Err}|)\\
&\ll |\text{Err}|+(\log N)^{-2}E(\bf{h}^{(p)}))\\
&\ll \d_N(E(\bf{h}^{(p)}))\\
&\ll \fc{|U(\bf{h}^{(p)},\e^{\pm})|}{(\log N)^2}.
\eal
$$

But this contradicts the assumption that $\bf{h}^{(p)}$ is $\bf{\e}$-big, see \eqref{big vector}. So there is an $\bf{n^*}\in U(\bf{h}^{(p)}, \bf{\e^{\pm}})$ such that
\beq\label{big L(n)}
\|\L(\bf{n})\|> (\log N)^{-2} .
\eeq
For every $\bf{m}\in U(\bf{h}^{(q)},\e^{\pm})$ (another $\bm{\e}$-big vector), consider the "arithmetic progression" with difference $\bf{n^*}$:
$$
\bf{m}+r\cdot \bf{n^*}=(m_1+r\cdot \nstar_1,m_2+r\cdot \nstar_2,m_3+r\cdot \nstar_3), \quad r=0,\pm 1,\pm 2,\dots
$$
We will estimate how many consecutive members $\bf{m}+r\cdot \bf{n^*}$ are contained in $U(\bf{h}^{q},\e^{\pm})$. Since $n^*\in \Up$, the definition for $\Uq$ (see \eqref{Def for Ul}) gives the following:
\beq\label{range for hp_1}
(1+\d_N)^{h_1^{(p)}}\le\e_1n^*_1<(1+\d_N)^{\hp_1+1},
\eeq
\beq\label{range for hp_2}
(1+\d_N)^{h_2^{(p)}}\le\e_2(n^*_1\tau-n^*_2)<(1+\d_N)^{\hp_2+1},
\eeq
\beq\label{range for hp_3}
(1+\d_N)^{h_3^{(p)}}\le |n^*_1||n^*_1\tau-n^*_2|\|\n^{*}\bma\|<(1+\d_N)^{\hp_3+1}.
\eeq
\begin{Def}\label{Def for inner points}
An $\bf{m}\in \Uq$ is called an inner point if 
\beq\label{range for inner point 1}
(1+\d_N)^{h_1^{(q)}}\l(1+\fc{\d_N}{(\log N)^2}\r)\le\e_1m_1<\l(1-\fc{\d_N}{(\log N)^2}\r)(1+\d_N)^{\hq_1+1},
\eeq
\beq\label{range for inner point 2}
(1+\d_N)^{h_2^{(q)}}\l(1+\fc{\d_N}{(\log N)^2}\r)\le\e_2(m_1\tau-m_2)<\l(1-\fc{\d_N}{(\log N)^2}\r)(1+\d_N)^{\hq_2+1},
\eeq
\beq\label{range for inner point 3}
(1+\d_N)^{h_3^{(q)}}\l(1+\fc{\d_N}{(\log N)^2}\r)\le |m_1||m_1\tau-m_2|\cdot\|\bf{m}\bma\|<\l(1-\fc{\d_N}{(\log N)^2}\r)(1+\d_N)^{\hq_3+1}.
\eeq
The rest of the points in $\Uq$ are called border points.
\end{Def}
For every inner point $\bf{m} \in \Uq$, and for every $|r|\le (\log N)^4$, it follows from \eqref{epsilon neigborhood for l_1 and l_2}, \eqref{range for hp_1} and \eqref{range for inner point 1} that, 
\beq\label{range for coordinate 1 of progression}
\bal
(1+\d_N)^{h_1^{(q)}}&<(1+\d_N)^{h_1^{(q)}}\l(1+\fc{\d_N}{(\log N)^2}\r)-(\log N)^4(1+\d_N)^{\hp_1+1}\\
&\le\e_1(m_1+r\cdot n^*_1)\\
&<\l(1-\fc{\d_N}{(\log N)^2}\r)(1+\d_N)^{\hq_1+1}+(\log N)^4(1+\d_N)^{\hp_1+1}\\
&<(1+\d_N)^{h_1^{(q)}+1}.\\
\eal
\eeq
Similarly, from \eqref{epsilon neigborhood for l_1 and l_2}, \eqref{range for hp_2} and \eqref{range for inner point 2}, we obtain the following, 
\beq\label{range for coordinate 2 of progression}
(1+\d_N)^{-h_2^{(q)}}<\e_2((m_1+r\cdot n^*_1)\tau-(m_2+rn_2^*))<(1+\d_N)^{-h_2^{(q)}+1},
\eeq
and from \eqref{epsilon neigborhood for l_3}, \eqref{range for hp_3} and \eqref{range for inner point 3} we have:
\beq\label{range for coordinate 3 of progression}
\bal
&\fc{(1+\d_N)^{\hq_3}}{|m_1+r\cdot n_1^*||(m_1+r\cdot n_1^*)\tau-(m_2+r\cdot n_2^*)|}\\ 
&<\|\bf{m}\bma\|-|r|\cdot \|\n^*\bma\|\\
&\le\|(\bf{m}+r\cdot \bf{n}^*)\bma\|\\
&\le \|\bf{m}\bma\|+|r|\cdot \|\n^*\bma\|\\
&<\fc{(1+\d_N)^{\hq_3+1}}{|m_1+r\cdot n_1^*||(m_1+r\cdot n_1^*)\tau-(m_2+r\cdot n_2^*)|},\\ 
\eal
\eeq
where we use the 
In view of \eqref{range for coordinate 1 of progression}-\eqref{range for coordinate 3 of progression}, for any inner point $\bf{m}\in \Uq$, at least $(\log N)^4$ consecutive members in the progression $\bf{m}+r\cdot\bf{n}^*$ are contained in $\Uq$. Therefore, we can decommpose $\Uq$ into three parts: 
\beq\label{decomposition for Sq}
\Uq=AP^+\cup AP^-\cup BP
\eeq
where $AP^{\pm}$ denotes the family of arithmetic progreesions $\{\bf{m}+r\cdot \bf{n}^*: 0\le r \le l-1 \}$ in $\Uqplus$ and $\Uqminus$ respectively, where $l=l(\bf{m})$ is the length of the progression starting from $\bf{m}$, and $l\ge (\log N)^4$. $BP$ denotes a set of border points of $\Uq$ that are not included in any arithmetic progressions. Using $\|\mathcal{L}(\bf{n})\|>(\log N)^{-2}$ (see \eqref{big L(n)}), the linearity of $\L$, we obtain
\beq\label{control for the arithmetic progressions in Spplus}
\bal
|\sum_{\bf{AP}^+} e^{2\pi \i \L(\bf{n})}|
& \le\sum_{\text{arithmetic progressions}} |\sum_{r=0}^{l-1} e^{2\pi \i \L(\bf{m}+r\bf{n^*})}|\\
& =\sum_{\text{arithmetic progressions}} |\sum_{r=0}^{l-1} e^{2\pi \i \L(\bf{m})+r\L(\bf{n^*})}|\\
&\ll \sum_{\text{arithmetic progressions}} \fc{1}{\|\Ln\|}<\sum_{\text{arithmetic progressions}}(\log N)^2\\
&\le \sum_{\text{arithmetic progressions}} \fc{\text{length}}{(\log N)^2}\le \fc{|\Uqplus|}{(\log N)^2}\\
\eal
\eeq
since each length $\ge (\log N)^4$. Similarly, 
\beq\label{control for the arithmetic progressions in Spminus}
|\sum_{\bf{AP}^-} e^{2\pi \i \L(\bf{n})}|\ll \fc{|\Uqminus|}{(\log N)^2}
\eeq
Finally, for border points, at least one of the inequalities in definition \ref{Def for inner points} is violated, thus the range for at least one components is shrinked by the ratio $\fc{\d_N}{(\log N)^2}$. Using the same reasoning as in Lemma \ref{Lemma for |U(l,epsilon)|=main + Err}, and the fact that the cardinality of the set $BP$ can be controlled by the total number of border points of $\Uq$, we have for almost every $\bma$,
\beq\label{cardinality for border points}
|BP|\ll \fc{|\Uq|}{(\log N)^2}
\eeq
Combining \eqref{decomposition for Sq}-\eqref{cardinality for border points}, for almost every $\bma$, we obtain
$$
\l|\sum_{\bf{n}\in U(\bf{h}^{(p)},\e^+)}e^{2\pi \i \L(\bf{n})}-\sum_{\bf{n}\in U(\bf{h}^{(p)},\e^-)}e^{2\pi \i \L(\bf{n})}\r| \ll \fc{|U(\bf{h}^{(p)},\e^{\pm})|}{(\log N)^2}
$$
which contradicts the assumption that $\hq$ is $\bm{\e}$-big (see \eqref{big vector}), therefore for almost very $\bma$, every special line contains at most one $\bm{\e}$-big vector, which proves the lemma.
\end{proof}

\begin{cor}\label{number of e-big vectors}
The number of $\bm{\e}$-big vectors is $\ll\d_N^{-3}\cdot(\log N)^2(\log \log N)$.
\end{cor}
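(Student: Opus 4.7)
My plan is to cover the set of valid $\bf{l}$'s satisfying \eqref{range for l} by maximal special lines and apply Lemma \ref{key lemma}, which supplies at most one $\bm{\e}$-big vector per line. Since $\log(1+\d_N)\asymp \d_N$ with $\d_N=(\log N)^{-2}$, each coordinate $l_j$ of a valid vector ranges over an interval of length $O(\log N/\d_N)$, giving a total of $O((\log N/\d_N)^3)$ valid vectors. For a fixed $\bm{\e}=(\e_1,\e_2)\in\{-1,+1\}^2$, the neighbor conditions \eqref{epsilon neigborhood for l_1 and l_2}--\eqref{epsilon neigborhood for l_3} dictate a constant step vector $\v=(\e_1\Delta_1,\e_2\Delta_2,\Delta_3)$ with $\Delta_j\asymp \log\log N/\d_N$ for each $j$, so a special line in direction $\bm{\e}$ is exactly an arithmetic progression with common difference $\v$.

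To count the maximal special lines in this direction, I partition the valid set by declaring each $\bf{l}$ to belong to its unique maximal line, formed by extending $\bf{l}\mapsto \bf{l}\pm\v$ until the result leaves the valid range. The number of maximal lines equals the number of starting points, i.e., valid $\bf{l}$ with $\bf{l}-\v$ invalid. Such a starting point must have at least one coordinate within $\Delta_j$ of the corresponding boundary of its range, so it lies in a union of three boundary slabs. The $j$-th slab has cardinality $O(\Delta_j\cdot (\log N/\d_N)^2)=O((\log N)^2\log\log N/\d_N^3)$, and the union retains this order.

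Combining this with Lemma \ref{key lemma}, the number of $\bm{\e}$-big vectors is at most the number of maximal lines, so it is $\ll \d_N^{-3}(\log N)^2\log\log N$, as claimed. No step appears to present a genuine obstacle; the main routine task is the boundary-slab count, and the one mild bookkeeping issue — ensuring the $\Delta_j$ are integers so that maximal lines stay on the lattice $\N^3$ — is already addressed by the remark following \eqref{epsilon neigborhood for l_3}, which permits a slight adjustment of $\d_N$.
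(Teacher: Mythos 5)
Your proposal is correct and follows essentially the same route as the paper: both count $\bm{\e}$-big vectors by bounding the number of maximal special lines through their starting points, observing that a starting point must have at least one coordinate within $\asymp \log\log N/\d_N$ of the boundary of its range (while the remaining coordinates vary over intervals of length $\asymp \log N/\d_N$), and then apply Lemma \ref{key lemma} to conclude each line contributes at most one big vector.
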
{}
\begin{proof}
First we estimate the number of maximal special lines. Let $H=\<\bf{h}^{(1)},\bf{h}^{(2)},\bf{h}^{(3)},\dots\>$ be a "special line", here $\bf{h}^{(1)}$ is the first element of $H$, that is, if $\bf{h}^{(0)} {\ra}\bf{h}^{(1)}$ holds for some $\bf{h}^{(0)}$, then at least one of the following inequalities is violated(see \eqref{range for l}:
$$
(1+\d_N)^{h_j^{(0)}}\ge (\log N)^{40}, \ 1\le j\le 2 $$
$$
(1+\d_N)^{h_3^{(0)}}\ge (\log N)^{40}.
$$
Thus, by \eqref{epsilon neigborhood for l_1 and l_2} and \eqref{epsilon neigborhood for l_3}, one of the inequalities below holds:
$$
(\log N)^{40}\le(1+\d_N)^{\e_j h_1^{(1)}}\le (\log N)^{40}, \ 1\le j\le 2 $$
$$
(\log N)^{40}\le(1+\d_N)^{h_3^{(1)}}\le (\log N)^{40+27}. $$

So at least one coordinate $h_j^{(1)}$ of the first element $\bf{h}^{(1)}$ of $H$ is restricted to an interval of length const$\cdot \log \log N\cdot \d_N^{-1}$, the rest, by the condition \eqref{range for l}, are restricted to an interval of length const$\cdot \log N\cdot \d_N^{-1}$. Since the starting vector determines the whole $\bm{\e}$-special line, the number of $\bm{\e}$-special line is
$$
\ll (\log \log N)\cdot (\log N)^2\cdot\d_N^{-3}.
$$
By Lemma \ref{key lemma}, the total number of $\bm{\e}$-big vectors is also
$$
\ll (\log \log N)\cdot (\log N)^2\cdot\d_N^{-3}.
$$

\end{proof}

With the help of Lemma \ref{key lemma}, we can estimate the contribution of the exponential terms, we have the following:
\begin{Prop}\label{Estimation of the exponential terms}
For almost every $\bma$ we have 
$$
\l|\bar{D}_5\r|\ll (\log N)^2(\log \log N)
$$
\end{Prop}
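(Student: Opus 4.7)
The plan is to exploit the pairwise sign-cancellation built into the definition of the $\bfe^+,\bfe^-$ box pairs. Starting from the expansion \eqref{multiplied out term}, for each fixed $\bf{s}$ (only four choices contribute) I regroup the sum over $\n\in U_5$ into a sum over box pairs $(U(\bf{l},\bfe^+),U(\bf{l},\bfe^-))$ with matching first two coordinates. On $U(\bf{l},\bfe^+)$ the divisor $n_1(n_1\tau-n_2)(\n\bma-n_3)$ is positive of magnitude $(1+\d_N)^{l_3}(1+O(\d_N))$, while on $U(\bf{l},\bfe^-)$ it has the same magnitude but opposite sign. Factoring out the common size, the joint contribution of one pair becomes
\[
\fc{1+O(\d_N)}{(1+\d_N)^{l_3}}\l(\sum_{\n\in U(\bf{l},\bfe^+)}g(\n)e^{2\pi\i\L_{\bf{s}}(\n)}-\sum_{\n\in U(\bf{l},\bfe^-)}g(\n)e^{2\pi\i\L_{\bf{s}}(\n)}\r),
\]
where $g$ is the bounded smooth weight of \eqref{gN}. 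An Abel summation along each coordinate of the box, using that $g$ has bounded total variation uniformly in $\bma$ and $N$, reduces the problem to bounding the same difference with $g\equiv 1$, at the cost of a universal multiplicative constant.

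Next I split the box pairs according to whether $\bf{l}$ is $\bfe$-big. For a non-big pair, the definition \eqref{big vector} combined with Lemma \ref{Lemma for |U(l,epsilon)|=main + Err} yields
\[
\l|\sum_{\bfe^+}e^{2\pi\i\L}-\sum_{\bfe^-}e^{2\pi\i\L}\r|\le\fc{|U(\bf{l},\bfe^+)|+|U(\bf{l},\bfe^-)|}{\log N}\ll\fc{\d_N^{3}(1+\d_N)^{l_3}}{\log N},
\]
so each such pair contributes $\ll\d_N^{3}/\log N$ to $\Ds$. The admissible vectors $\bf{l}$ satisfying \eqref{range for l} range over a set of cardinality $\ll(\log N/\d_N)^{3}$, so the non-big total is $\ll(\log N)^2$. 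For a $\bfe$-big pair I apply the trivial bound $|\sum e^{2\pi\i\L}|\le|U(\bf{l},\bfe^\pm)|\ll\d_N^{3}(1+\d_N)^{l_3}$, giving $\ll\d_N^{3}$ per pair; multiplying by the count $\ll\d_N^{-3}(\log N)^2\log\log N$ from Corollary \ref{number of e-big vectors} yields $\ll(\log N)^2\log\log N$. Summing over the $O(1)$ choices of $\bf{s}$ and of $\bfe^{\pm}$ gives the asserted $|\Ds|\ll(\log N)^2\log\log N$.

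The main obstacle will be controlling the accumulated error: the $(1+O(\d_N))$ absorbed from the denominator, the constants from Abel summation on $g$, and the error term $\d_N^{4}(1+\d_N)^{l_3}$ in the box cardinality from Lemma \ref{Lemma for |U(l,epsilon)|=main + Err}. A direct check shows that the last of these, weighted by $(1+\d_N)^{-l_3}$ and summed over all $\bf{l}$, contributes $\d_N^{4}\cdot\d_N^{-3}(\log N)^{3}=\d_N(\log N)^{3}=\log N$ with $\d_N=(\log N)^{-2}$, which is well below the target. Once this bookkeeping is dispatched, the non-big/big dichotomy together with Corollary \ref{number of e-big vectors} removes the extraneous logarithm that would otherwise appear from a purely trivial bound on \eqref{multiplied out term}, and the proposition follows.
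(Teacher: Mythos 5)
Your proposal is correct and follows essentially the same route as the paper: split $\bar D_5$ according to whether each box pair $(U(\bf l,\bfe^+),U(\bf l,\bfe^-))$ is $\bfe$-big, use the definition \eqref{big vector} together with Lemma \ref{Lemma for |U(l,epsilon)|=main + Err} and the count $\ll(\log N/\d_N)^3$ of admissible $\bf l$ for the non-big part, and use the trivial bound $|U(\bf l,\bfe^\pm)|\ll\d_N^3(1+\d_N)^{l_3}$ together with Corollary \ref{number of e-big vectors} for the big part. The only addition on your side is that you make explicit the reduction from $\ftwo$ to bare exponential sums (factoring out the divisor magnitude $(1+\d_N)^{l_3}(1+O(\d_N))$ and removing the weight $g$ via Abel summation, which is justified because all three constraints in \eqref{Def for Ul} pin $g$ to a narrow band over each box) and the bookkeeping of the $\d_N^4(1+\d_N)^{l_3}$ error term, both of which the paper leaves implicit.
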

\begin{proof}
$$
\bar{D}_5=\sum_{\text{small}}+\sum_{\text{big}}
$$
where 
\beq\label{small sum}
\sum_{\text{small}}=\sum_{\bf{\e}}\sum_{\text{$\bf{l}$ not $\bf{\e}$-big}}\sum_{\bf{n}\in U(\bf{l},\bf{\e})}   \ftwo
\eeq
\beq\label{big sum}
\sum_{\text{small}}=\sum_{\bf{\e}}\sum_{\text{$\bf{l}$ is $\bf{\e}$-big}}\sum_{\bf{n}\in U(\bf{l},\bf{\e})}   \ftwo
\eeq
By Lemma \ref{Lemma for |U(l,epsilon)|=main + Err}, range for $\bf{l}$: \eqref{range for l}, and \eqref{big vector},
$$
\bal
\l|\sum_{\text{small}}\r|&\ll \sum_{\bf{\e}}\sum_{\bf{l}\text{ is not $\bf{\e}$-big}} (1+\d_N)^{-l_3}\cdot\fc{ E(\bf{l})+Err}{\log N}\\
&\ll\sum_{\bf{l}:\eqref{range for l}}\fc{\d_N^3}{\log N}\\
&\ll \fc{\d_N^3}{\log N} \cdot \fc{(\log N)^3}{\d_N^3}\\
&\ll (\log N)^2\\
\eal
$$
By Corollary \ref{number of e-big vectors},
$$
\bal
|\sum_{\text{big}}|&\ll \sum_{\text{$\bf{l}$ is $\bm{\e}$-big}} (1+\d_N)^{-l_3}\cdot (E(\bf{l})+Err)\\
&\ll[(\log \log N)(\log N)^2\d_N^{-3}]\cdot[(1+\d_N)^{-l_3}\cdot (1+\d_N)^{l_3}\d_N^3]\\
&\ll(\log N)^2\log \log N\\
\eal
$$
\end{proof}
Combining Proposition 2.1, 4.1, 5.1 and Lemma 3.1, 4.1, we finally arrive at the convergent part of our main Theorem \ref{main result}. The divergent part is easy:

\nid\textbf{Proof of the divergent part of Theorem 1.2}

One way is to employ the same strategy as in Beck\cite{Beck}, and try to find a ``large'' Fourier coefficient using Lemma 3.3. But an easier way is to use Beck's result directly, since Beck proved that for $\sum_{n=1}^{\infty}\fc{1}{\varphi(n)}=\infty$, the maximal discrepancy function for boxes will be greater than $(\log N)^2\varphi(\log \log N)$ infinitely often, by cutting the box diagonally we know that at least the discrepancy function for one of the triangles is greater than $1/2(\log N)^2\varphi(\log \log N)$, since the maximal descrepancy is defined over all starting points $\bm{a}\in [0,1]^2$, we can translate the triangle with the large discrepancy to the origin, hence the maximal discrepancy defined as in the theorem 1.2 is also greater than $1/2(\log N)^2\varphi(\log \log N)$ infinitely often. Note that the coefficient $1/2$ does not change the convergence of the series of $\sum_{n=1}^{\infty}\fc{1}{\varphi(n)}$. The proof for the main theorem 1.2 is completed. \qed

\clearpage

\newpage 
\bibliographystyle{plain}
\bibliography{references}
\end{document}